\newtheorem{theorem}{Theorem}[section]
\newtheorem{proposition}{Proposition}[section]
\newtheorem{lemma}{Lemma}[section]
\newtheorem{definition}{Definition}[section]
\newtheorem{remark}{Remark}[section]
\numberwithin{equation}{section} \numberwithin{theorem}{section}
\numberwithin{proposition}{section} \numberwithin{lemma}{section}
\numberwithin{corollary}{section}
\numberwithin{definition}{section} \numberwithin{remark}{section}
\newcommand{\R}{\mathbb R} 
\newcommand{\wto}{\rightharpoonup}
\newcommand{\e}{\varepsilon}
\newcommand{\A}{{\mathcal A}}
\newcommand{\LL}{{\mathcal L}}
\newcommand{\HH}{{\mathcal H}}
\newcommand{\M}{{\mathcal M}}
\newcommand{\C}{{\mathcal C}}
\newcommand{\dd}{\,\mathrm{d}}
\newcommand{\Tr}{{\rm Tr}}
\let\O=\Omega
\newcommand{\dive}{{\rm div }}
\newcommand{\res}{\mathop{\hbox{\vrule height 7pt width .5pt depth 0pt
\vrule height .5pt width 6pt depth 0pt}}\nolimits}
\def \p{\partial}
\author[J.-F. Babadjian]{Jean-Fran\c cois Babadjian}
\author[M. Rakovsky]{ Martin Rakovsky}
\author[R. Rodiac]{R\'emy Rodiac}
\address[J.-F. Babadjian]{Universit\'e Paris-Saclay, CNRS,  Laboratoire de math\'ematiques d'Orsay, 91405, Orsay, France.}
\email{jean-francois.babadjian@universite-paris-saclay.fr}
\address[M. Rakovsky]{Universit\'e Paris-Saclay, CNRS,  Laboratoire de math\'ematiques d'Orsay, 91405, Orsay, France.}
\email{martin.rakovsky@universite-paris-saclay.fr}
\address[R. Rodiac]{Laboratoire J.A. Dieudonn\'e, Universit\'e C\^ote d'Azur, CNRS UNMR 7351,06108, Nice, France.}
\email{remy.rodiac@univcotedazur.fr}
\title[Critical of points of the Ambrosio-Tortorelli functional]{Critical points of the two-dimensional Ambrosio-Tortorelli functional with convergence of the phase-field energy}
\begin{document}

\begin{abstract}
We consider a family \(\{(u_\varepsilon, v_\varepsilon)\}_{\varepsilon>0}\) of critical points of the Ambrosio-Tortorelli functional. Assuming a uniform energy bound, the sequence $\{(u_\varepsilon, v_\varepsilon)\}_{\varepsilon>0}$ converges in $L^2(\Omega)$ to a limit $(u, 1)$ as $\varepsilon \to 0$, where  \(u\) is in \(SBV^2(\Omega)\). It was previously shown that if the full Ambrosio-Tortorelli energy associated to $(u_\e,v_\e)$ converges to the Mumford-Shah energy of $u$, then the first inner variation converges as well. In particular, $u$ is a critical point of the Mumford-Shah functional in the sense of inner variations. In this work, focusing on the two-dimensional setting, we extend this result under the sole convergence of the phase-field energy to the length energy term in the Mumford-Shah functional. 
\end{abstract}

\maketitle


\section{Introduction}

The Mumford-Shah (MS) functional is a prototypical energy used in free-discontinuity problems where the competition between a volume and a surface energy leads to concentration on a co-dimension one discontinuity set. It has been historically introduced in \cite{MS89} in the context of image segmentation, and has also been used  in fracture mechanics in \cite{FM98} to describe the propagation of brittle cracks. 

In order to present this energy, we need to introduce some notation. Let $\Omega$ be a Lipschitz bounded open subset of $\mathbb{R}^N$ with \(N\geq 1\), and  let \(g\in H^{\frac12}(\p \Omega)\) be a Dirichlet boundary data. The energy space, denoted by $SBV^2(\Omega)$, is made of all functions $u \in BV(\Omega)$ with vanishing Cantor part such that the approximate gradient $\nabla u$ belongs to $L^2(\Omega;\R^N)$ and $\mathcal H^{N-1}(J_u)<\infty$, where $J_u \subset \Omega$ is the jump set of $u$ and $\mathcal H^{N-1}$ is the $(N-1)$-dimensional Hausdorff measure. In order to account for possible boundary discontinuities, we introduce the extended jump set \(\widehat{J_u}\)  of \(u\) defined by
\begin{equation}\label{def:bighatJu}
\widehat{J_u}:=J_u \cup (\partial \Omega \cap \{u \neq g \}),
\end{equation}
where, with a slight abuse of notation, we still denote by $u$ the inner trace of $u$ on $\partial \Omega$. We define the Mumford-Shah functional $MS: SBV^2(\Omega) \to \R$ by
\begin{align}\label{def:MS}
MS(u) & =  \int_\Omega |\nabla u|^2\, dx  + \mathcal{H}^{N-1}(J_u)+\mathcal{H}^{N-1}(\partial \Omega \cap \{ u \neq g \})\nonumber\\
 &=\int_\Omega |\nabla u|^2\, dx  + \mathcal{H}^{N-1}(\widehat{J_u}) \quad \text{ for all }u \in SBV^2(\Omega).
\end{align}

Using the direct method in the calculus of variations, it can be proven that this functional admits minimizers as a consequence of Ambrosio's compactness and lower semicontinuity results, see \cite{AFP00,DeGiorgi_Carriero_Leaci_1989,Carriero_Leaci_1990}. The description of the jump set is of particular interest in connexion with the Mumford-Shah conjecture which stipulates that, in dimension $N=2$, the jump set $J_u$ is made of finitely many $\mathcal C^1$ curves intersecting at triple junction type points. In particular, the regularity (or lack of regularity) of the jump set represents a very interesting topic of research also from the application point of view for the description of edges in image segmentation and cracks in mechanics. We refer to the monographs \cite{AFP00,David, DLF} for related results in that direction. 

If one is interested into the numerical approximation of minimizers of the Mumford-Shah energy, we face a difficulty related to the fact that the discontinuity set is not a priori known. It would require a high mesh size precision in the spatial region where $u$ is discontinuous which might lead to too costly numerical schemes.  This is why it is convenient to approximate the MS energy by a more regular functional. In the spirit of the Allen-Cahn model for phase transitions, see e.g.\ \cite{M87,S88}, Ambrosio and Tortorelli proposed in \cite{AT92} the following variational phase-field regularization of the MS energy. In our context, we set
\begin{equation}\label{def:Ag}
\mathcal{A}_g:=\{ (u,v)\in [H^1(\Omega)\times (H^1(\Omega)\cap L^\infty(\Omega))]:\;  u=g \text{ and } v=1 \text{ on }\partial \Omega\}
\end{equation}
and we define the Ambrosio-Tortorelli (AT) energy $AT_\e:\mathcal A_g \to \R$ by 
\begin{equation}\label{eq:defAT}
AT_\varepsilon (u,v) = \int_\Omega (\eta_\varepsilon + v^2) |\nabla u|^2 dx + \int_\Omega \left(\varepsilon |\nabla v|^2 + \frac{(1-v)^2}{4\varepsilon}\right)dx \quad \text{ for all }(u,v)\in \A_g,
\end{equation}
where $0< \eta_\varepsilon \ll \varepsilon$ is a small parameter that ensures ellipticity. This regularization has a mechanical interpretation. The phase-field variable $v$ can be seen as a damage variable taking values in the interval $[0,1]$. The region $\{v=1\}$ corresponds to completely sane material, whereas where $\{v=0\}$ the material is totally damaged. In \cite{AT92}, in a slightly different context (i.e., in absence of Dirichlet boundary conditions), a $\Gamma$-convergence result of $AT_\varepsilon$ to $\widetilde{MS}$ is proved by suitably extending $MS$ as a two variables functional : 
\begin{equation}\label{eq:defMS}
\widetilde{MS}(u,v) = 
\begin{cases}
MS(u) & \text{if } u\in SBV^2(\Omega) \text{ and } v\equiv 1  ,\\
+\infty & \text{otherwise.}
\end{cases}
\end{equation}
 The existence of minimizers in \(\A_g\) for $AT_\varepsilon$ for a fixed value of $\varepsilon$ is obtained via the direct method in the calculus of variations. The fundamental theorem of $\Gamma$-convergence then ensures that a subsequence $\{(u_\varepsilon, v_\varepsilon)\}_{\e>0}$ of minimizers of $AT_\varepsilon$ converges in $[L^2(\Omega)]^2$ to $(u,1)$ where $u$ a minimizer of $MS$. This motivates the use of \(AT_\e\), with \(\e>0\) small, to numerically approximate minimizers of the MS functional. This regularized formulation is indeed at the basis of numerical simulations, see e.g.\ \cite{BFM08}.

However, concerning the numerical implementation, we observe that the term $v^2 |\nabla u|^2$  makes the functional $AT_\varepsilon$ non-convex. Consequently, numerical methods might fail to converge to a minimizer of $AT_\varepsilon$. For example, using the fact that $AT_\varepsilon$ remains separately strictly convex, it was proposed in \cite{BFM08} to perform an alternate minimization algorithm. It is proven in \cite[Theorem 1]{B07} that the sequence of iterates converges to a critical point of $AT_\varepsilon$, but nothing guarantees  this critical point to be a minimizer. It motivates the study of the convergence of critical points of the AT functional as $\varepsilon \rightarrow 0$, and it raises the question to know if their limits correspond to critical points of the MS functional in some sense.

\medskip

We start by recalling that a critical point for the AT functional is a pair $(u_\varepsilon, v_\varepsilon) \in \A_g$ such that 
\begin{equation}\label{eq:critical_AT}
\dfrac{d}{dt}\Bigl|_{t=0}  AT_\varepsilon (u_\varepsilon + t\psi, v_\varepsilon + t\varphi) = 0 \;\;\; \text{ for all } \, (\psi, \varphi) \in \: H_0^1 (\Omega)\times (H_0^1(\Omega) \cap L^\infty(\Omega)).
\end{equation}
Tools of $\Gamma$-convergence provide little help to study the convergence of critical points. The extension of the fundamental theorem of $\Gamma$-convergence to the convergence of critical points has been studied in various settings, see e.g.\ \cite{HT00,T02,T05,B25} for the Allen-Cahn functional and \cite{BBH94,BBO01,LR01,SS07} for the Ginzburg-Landau functional, see also \cite{Braides_2014}. Concerning the AT functional, the general convergence of critical points to critical points of the MS energy is established in dimension $N=1$ in \cite{FLS09,L10,BMRb23}. We also refer to \cite{BI23} for the convergence of critical points of a  phase-field approximation of cohesive fracture energies in 1D.  In dimension $N \geq 2$, the convergence of critical points has been established in \cite{BMRa23}, with the additional assumption of convergence of the energy 
$$AT_\varepsilon (u_\varepsilon, v_\varepsilon) \to MS(u).$$ A similar assumption of convergence of energy was made in \cite{Le_2011,Le_2015,LS19} to study the convergence of critical points of the Allen-Cahn functional to minimal surfaces, and to understand how stability passes to the limit. However this is not always true that convergence of the energy holds. For example, in \cite[Section 6.3]{HT00} it is proved that interfaces with multiplicities can be limits of a sequence of critical points of the Allen-Cahn functional. Consequently these critical points do not satisfy the energy convergence assumption. 

\medskip

In this article we weaken the assumption of convergence of energy made in \cite{BMRa23}. Our main result  Theorem \ref{Theorem 1}, establishes the convergence of critical points under the assumption that only the phase-field term converges, i.e.,
$$ \int_\Omega \left(\varepsilon |\nabla v|^2 + \frac{(1-v)^2}{4\varepsilon}\right)dx \to \mathcal H^{N-1}(\widehat{J_u}).$$
This result is restricted to dimension $N=2$, which is meaningful from the application point of view either in image segmentation, or in the anti-plane brittle fracture model.

\bigskip

Let us close this introduction by some notations and definitions used throughout the paper.

\subsection*{Linear algebra}

Let  \(\mathbb{M}^{N\times N}\) be the set of real \(N \times N\) matrices, and \(\mathbb{M}^{N\times N}_{\text{sym}}\) the set of real symmetric \(N \times N\) matrices. Given two vectors \(a\) and \(b\) in \(\R^N\), we denote by \(a\cdot b \in \R\) their inner product and by $a \otimes b=a^T b \in \mathbb{M}^{N\times N}$ their tensor product. We will use the Frobenius inner product of matrices $A$ and $B \in \mathbb{M}^{N\times N}$, defined by \(A:B= \text{Tr}(A^T B)\) and the associated norm \(|A|=\text{Tr}(A^TA)^{1/2}\). 

\subsection*{Measures} 

The Lebesgue measure in $\R^N$ is denoted by $\LL^N$, and the $k$-dimensional Hausdorff measure by $\HH^k$. 

If $X \subset \R^N$ is a locally compact set and $Y$ an Euclidean space, we denote by $\mathcal M(X;Y)$ the space of $Y$-valued bounded Radon measures in $X$ endowed with the norm $\| \mu \|=|\mu|(X)$, where $|\mu|$ is the variation of the measure $\mu$. If $Y=\R$, we simply write $\mathcal M(X)$ instead of $\mathcal M(X;\R)$. By Riesz representation theorem, $\mathcal M(X;Y)$ can be identified with the topological dual of $\C_0(X;Y)$, the space of continuous functions $f:X \to Y$ such that $\{|f|\geq \e\}$ is compact for all $\e>0$. The weak$^\star$ topology of $\mathcal M(X;Y)$ is defined using this duality.

\subsection*{Functional spaces}

We use standard notation for Lebesgue, Sobolev and H\"older spaces. Given a bounded open set $\O \subset \R^N$, the space of functions of bounded variation is defined by
$$BV(\O)=\{u \in L^1(\O) : \; Du \in \mathcal M(\O;\R^N)\}.$$
We shall also consider the subspace $SBV(\O)$ of special functions of bounded variation made of functions $u \in BV(\O)$ whose distributional derivative can be decomposed as
$$Du=\nabla u \LL^N + (u^+-u^-)\nu_u \HH^{N-1} \res J_u.$$
In the previous expression, $\nabla u$ is the Radon-Nikod\'ym derivative of $Du$ with respect to $\LL^N$, and it is called the approximate gradient of $u$. The Borel set $J_u$ is the (approximate) jump set of $u$. It is a countably $\HH^{N-1}$-rectifiable subset of $\O$ oriented by the (approximate) normal direction of jump $\nu_u :J_u \to \mathbf S^{N-1}$, and $u^\pm$ are the one-sided approximate limits of $u$ on $J_u$ according to $\nu_u$. Finally we define
$$SBV^2(\O)=\{u \in SBV(\O) : \; \nabla u \in L^2(\O;\R^N) \text{ and } \HH^{N-1}(J_u)<\infty\}.$$

We say that a Lebesgue measurable set $E \subset \O$ has finite perimeter in $\O$ if its characteristic function ${\bf 1}_E \in BV(\O)$. We denote by $\partial^\star E$ its reduced boundary. We refer to \cite{AFP00} for a detailed description of the space $BV$.

\subsection*{Varifolds}\label{sec:varifold}

Let us recall several basic ingredients of the theory of varifolds (see e.g. \cite{S83}). We denote by $\mathbf G_{N-1}$ the Grassmannian manifold of all $(N-1)$-dimensional linear subspaces of $\R^N$. The set $\mathbf G_{N-1}$ is as usual identified with the set of all orthogonal projection matrices onto $(N-1)$-dimensional linear subspaces of $\R^N$, i.e., $N \times N$ symmetric matrices $A$ such that $A^2=A$ and ${\rm tr}(A)=N-1$, in other words, matrices of the form 
$$A={\rm Id}-e\otimes e$$
for some $e \in \mathbf S^{N-1}$.

A $(N-1)$-varifold in $X$ (a locally compact subset of $\R^N$) is a bounded Radon measure on $X\times \mathbf G_{N-1}$. The class of $(N-1)$-varifold in $X$ is denoted by $\mathbf V_{N-1}(X)$. The mass of $V\in \mathbf V_{N-1}(X)$ is simply the measure  $\|V\| \in \M(X)$ defined by $\|V\|(B)=V(B \times \mathbf G_{N-1})$ for all Borel sets $B \subset X$.  We define the first variation of an \((N-1)\)-varifold in $V$ in an open set \(U \subset \R^N\) by
$$\delta V(\varphi)=\int_{U \times \mathbf{G}_{N-1}} D \varphi(x):A \dd V(x,A) \quad \text{ for all } \varphi\in \C^1_c(U;\R^N).$$
We say that an \((N-1)\)-varifold is stationary in $U$ if \(\delta V(\varphi)=0\) for all \(\varphi\) in \(\C^1_c(U;\R^N)\). We recall that such a varifold satisfies the monotonicity formula
\begin{equation}\label{eq:monotonicity}
\frac{\|V\|(B_\varrho(x_0))}{\varrho^{N-1}}=\frac{\|V\|(B_r(x_0))}{r^{N-1}}+\int_{(B_\varrho(x_0)\setminus B_r(x_0)) \times \mathbf G_{N-1}}\frac{|P_{A^\perp}(x-x_0)|^2}{|x-x_0|^{N+1}}\dd V(x,A)
\end{equation}
for all $x_0 \in U$ and $0<r<\varrho$ with $B_\varrho(x_0) \subset U$, where $P_{A^\perp}$ is the orthogonal projection onto the one-dimensional space $A^\perp$ (see \cite[paragraph 40]{S83}).

\section{Formulation of the problem and statements of the main result}

Throughout the paper, $\Omega$ is a bounded open subdomain of $\mathbb{R}^2$ with \(\C^{2,1}\) boundary. We denote by $\nu$  the outward unit normal field on $\partial \Omega$.

We recall that the Ambrosio-Tortorelli functional is defined for \( (u,v)\in \mathcal{A}_g\) by \eqref{eq:defAT}, where \(\A_g\) is given by \eqref{def:Ag} and  $\eta_\varepsilon>0$ is a positive small parameter such that $\eta_\varepsilon / \varepsilon \rightarrow 0$. We also recall that the Mumford-Shah functional is defined by \eqref{def:MS}. A critical point \((u_\e,v_\e)\)  of $AT_\varepsilon$ with a prescribed Dirichlet boundary condition is defined in \eqref{eq:critical_AT}. It corresponds to a zero of the outer variations of $AT_\varepsilon$. It is immediate to check that \((u_\e,v_\e) \in \A_g\) is a weak solution of the following system of elliptic partial differential equations:
\begin{equation}\label{criticalpointAT}
\begin{cases}
  -\varepsilon \Delta v_{\varepsilon} + \dfrac{v_{\varepsilon}-1}{4\varepsilon} + v_{\varepsilon} |\nabla u_{\varepsilon}|^2=0 & \text{in } \Omega ,\\
\text{div}\: ((\eta_\varepsilon + v_\varepsilon ^2) \nabla u_\varepsilon) = 0 & \text{in } \Omega,\\
u= g, \quad v=1 & \text{on }\partial \Omega.
\end{cases}
\end{equation}

Whereas the notion of critical points for the AT functional is fairly standard, the notion of critical points for the MS energy is more involved, see \cite[Section 7.4]{AFP00}. We first observe that considering the outer variations of \(MS\) is not sufficient to obtain a proper notion of critical point for $MS$. Indeed, outer variations of $u$ with respect to a smooth direction leaves the surface energy $\mathcal{H}^1(J_u)$ unchanged, so that such variations do not bring any information on the discontinuity set $J_u$. In order to complete the equations derived from the outer variations, we consider \textit{inner variations} of the MS energy. Such variations correspond to deformations of the domain $\Omega$. To define the inner variations of $MS$ up to the boundary, in addition to requiring $\partial \Omega$ to be of class $\mathcal{C}^{2,1}$  we also ask the boundary data \(g\) to belong to the H\"older space \(\C^{2,\alpha}(\p \Omega)\) for some \(\alpha\in (0,1)\). These assumptions are needed to invoke the boundary regularity results of \cite[Theorem 3.2]{BMRa23} which ensures that the weak solutions $(u_\e,v_\e)$ of \eqref{criticalpointAT} are actually classical solutions and belong to \([\C^{2,\alpha}(\overline \Omega)]^2\). Note that, it is necessary, in this type of  variational problems to have some regularity results at our disposal in order to relate inner and outer variations, cf.\ e.g.\ \cite[Section 2]{LS19}. 

\begin{definition}
Let \(X\in \C_c^1(\mathbb{R}^2;\mathbb{R}^2)\) be a vector field satisfying $X\cdot \nu =0$ on $\partial \Omega$. The {\rm flow map} $\Phi_t(x)$ of $X$, defined for every $x$ in $\mathbb{R}^2$, is the unique solution of the Cauchy problem

\[\begin{cases}
\displaystyle \frac{d}{dt}\Phi_t(x)  =  X(\Phi_t(x)) \quad \text{ for all } t \in \R,\\
\displaystyle \Phi_0(x) =  x.
\end{cases}\]
\end{definition}

Thanks to the Cauchy-Lipschitz Theorem, it can be shown that for all $x \in \R^2$, the map \(t \in \R \mapsto \Phi_t(x)\) is globally well-defined, that $(t,x) \mapsto \Phi_t(x)$ belongs to \(\C^1(\R\times \R^2;\R^2)\) and that \(\{\Phi_t\}_{t\in \R}\) is a one-parameter group of \(\C^1\)-diffeomorphisms of \(\R^2\) with \(\Phi_0=\text{Id}\). Furthermore, for each \(t>0\), we can check that \(\Phi_t\) is a \(\C^1\)-diffeomorphism of \(\overline{\Omega}\) which preserves both $\Omega$ and \(\p \Omega\).

\medskip

In the sequel, we consider an arbitrary extension \(G\) of \(g\in \C^{2,\alpha}(\p \Omega)\), which we assume to belong to $\mathcal C^{2,\alpha}_{\text{\rm loc}}(\mathbb{R}^2)$. As already observed in \cite{BMRa23}, the notion of inner variations introduced below a priori depends on the choice of the extension $G$. However, we do not explicitly highlight this dependence in order not to overburden notation.

\begin{definition}
Let  \(X\in \C_c^1(\mathbb{R}^2;\mathbb{R}^2)$ be such that $X\cdot \nu =0$ on $\partial \Omega$ and $\{\Phi_t\}_{t \in \R}$ be  its flow map. For all $u\in SBV^2(\Omega)$, the {\rm first inner variation of $MS$} at $u$ in the direction \(X\) is defined by 
\[\delta MS(u)[X]:= \lim\limits_{t\rightarrow 0}\frac{MS(u\circ \Phi_t^{-1} - G\circ \Phi_t +G) - MS(u)}{t}.\]
\end{definition}

In order to define outer variations, we introduce the following notation. Given $\varphi \in SBV^2(\Omega)$, we set \(\widehat{J_\varphi}=J_\varphi \cup (\partial \Omega \cap \{\varphi \neq g\})\) where we still denote by $\varphi$ the inner trace of $\varphi$ on $\partial \Omega$. In other words, setting $\hat \varphi:=\varphi {\bf 1}_\Omega + G{\bf 1}_{\R^2 \setminus \Omega} \in SBV^2_{\rm loc}(\R^2)$, then $\widehat{J_\varphi}=J_{\hat \varphi}$.

\begin{definition}
Let $\varphi \in SBV^2(\Omega)$ be such that $\widehat{J_\varphi} \subset \widehat{J_u}$. The {\rm first outer variation of $MS$} is defined by 
\begin{equation}\label{eq:growthrate}
dMS(u)[\varphi] :=\lim\limits_{t\rightarrow 0} \frac{MS(u+t\varphi)-MS(u)}{t}.
\end{equation}
\end{definition}

Observe that tests function are taken in $SBV^2(\Omega)$ and not only in $\mathcal{C}_c^\infty(\Omega)$. The assumption $\widehat{J_\varphi} \subset \widehat{J_u}$ ensures the existence of the limit in \eqref{eq:growthrate}. 

\begin{definition}
A function $u \in SBV^2(\Omega)$ is a {\rm critical point} of $MS$ if it satisfies the following two conditions:
$$
\begin{cases}
dMS(u)[\varphi] = 0 & \text{ for all } \varphi \in SBV^2(\Omega) \text{ such that }  \widehat{J_\varphi} \subset \widehat{J_u,}\\
\delta MS(u)[X] =0 & \text{ for all } X \in \mathcal{C}_c^1(\mathbb{R}^2;\mathbb{R}^2) \text{ such that } X\cdot \nu=0 \text{ on } \partial \Omega.
\end{cases}$$
\end{definition}

Computing the inner and the outer variations of $MS$, cf.\ e.g.\ \cite[Appendix A]{BMRa23}, we find that a critical point of $MS$ satisfies 
\begin{equation}\label{eq:3rd_eq_MS}
\int_\Omega \nabla u \cdot \nabla \varphi \, dx=0 \quad \text{ for all } \varphi \in SBV^2(\Omega) \text{ such that } \widehat{J_\varphi} \subset \widehat{J_u}
\end{equation}
and
\begin{multline*}
\int_\Omega (|\nabla u|^2 {\rm Id} - 2\nabla u \otimes \nabla u) : DX  \; dx  + \int_{\widehat{J_u}} ({\rm Id}-\nu_u \otimes \nu_u) : DX \; d\mathcal{H}^1\\
 = -2\int_{\partial \Omega} (\nabla u \cdot \nu) (X\cdot\nabla g) \, d\mathcal{H}^1 \text{ for all } X\in \C_c^1(\R^2;\R^2) \text{ with } X\cdot \nu=0 \text{ on } \p \Omega.
\end{multline*}
Note that if  \(\widehat{J_u}\) is smooth enough, \eqref{eq:3rd_eq_MS} translates into $\Delta u=0$ in $\Omega \setminus J_u$ and \(\p_{\nu_u} u=0\) on \(\widehat{J_u}\) (see \cite[Section 7.4]{AFP00}).

\medskip

One can also define the inner variations of $AT_\varepsilon$.

\begin{definition}\label{def point critique MS}
Let \(X \in \C^1_c(\mathbb{R}^2;\mathbb{R}^2)\) be such that $X\cdot \nu =0$ on $\partial \Omega$ and let $\Phi_t$ be the flow map associated to \(X\). The first inner variation of $AT_\varepsilon$ at $(u , v)$ is defined by
\[\delta AT_\varepsilon (u , v) [X] = \lim\limits_{t \rightarrow 0} \frac{AT_\varepsilon (u \circ \Phi_t^{-1} - G\circ \Phi_t^{-1} + G, v\circ \Phi_t^{-1}) - AT_\varepsilon (u , v)}{t}.\] 
\end{definition}

According to \cite[Theorem 3.1]{BMRa23}, if $(u_\varepsilon , v_\varepsilon)\in \A_g$ is a critical point of $AT_\varepsilon$ in the sense of \eqref{criticalpointAT}, then \( (u_\e,v_\e)\in [\mathcal{C}^\infty( \Omega)]^2\). Moreover, since we assumed that the boundary data \(g\) belongs to \( \C^{2,\alpha}(\p \Omega)\) and $\partial \Omega$ to be of class $\mathcal C^{2,1}$, we deduce from \cite[Theorem 3.2]{BMRa23} that  \( (u_\e,v_\e)\in [\mathcal{C}^{2,\alpha}(\overline \Omega)]^2\). This regularity property ensures that $(u_\e,v_\e)$ is also a zero of the inner variations, i.e., $\delta AT_\varepsilon (u_\varepsilon , v_\varepsilon)=0$, cf.\ e.g.\ \cite[Corollary 2.3]{LS19} or \cite[Proposition 4.2]{BMRa23}. Computing explicitly the inner variations of \(AT_\e\) (see  \cite[Lemma A.3]{BMRa23}), we find that a critical point $(u_\varepsilon , v_\varepsilon)$ of $AT_\varepsilon$ also satisfies 
\begin{multline}\label{variations internes AT}
\int_{\Omega} (\eta_\e+v_\e^2)\left(2 \nabla u_\varepsilon\otimes \nabla u_\varepsilon - |\nabla u_\varepsilon|^2 {\rm Id} \right) : DX \: dx \\
+ \int_\Omega \left[2\e \nabla v_\varepsilon \otimes \nabla v_\varepsilon -\left(\frac{(1-v_\e)^2}{\varepsilon} + \varepsilon |\nabla v_\e|^2\right){\rm Id}\right] : DX \: dx \\
=2(\eta_\varepsilon + 1) \int_{\partial \Omega} (\partial_\nu u_\varepsilon) (X\cdot \nabla g) \, d\mathcal{H}^1
\end{multline}
for all $X \in \C_c^1(\mathbb{R}^2;\R^2)$ such that $X\cdot \nu = 0$ on $\partial \Omega$.

\medskip

We are now in position to state the main result of this work.
\begin{theorem}\label{Theorem 1}
Let $\Omega \subset \R^2$ be a bounded open set with boundary of class $\mathcal C^{2,1}$ and \(g\in \C^{2,\alpha}(\p \Omega)\) for some $\alpha \in (0,1)$. 
Let $\{(u_\varepsilon , v_\varepsilon)\}_{\e>0}$ be a sequence of critical points of $AT_\varepsilon$ in the sense of \eqref{criticalpointAT}. Suppose that the sequence $\{(u_\varepsilon , v_\varepsilon)\}_{\varepsilon>0}$ satisfies the energy bound 
\begin{equation}\label{energy bound}
\sup_{\varepsilon > 0} AT_\varepsilon (u_\varepsilon , v_\varepsilon) < \infty.
\end{equation}
Then, up to extraction, we have that 
\begin{equation}\label{eq:conv_vers_u}
(u_\e,v_\e) \xrightarrow[\e \to 0]{[L^2(\Omega)]^2} (u,1) \text{ with } u \in SBV^2(\Omega) \text{ and } \dive (\nabla u)=0 \text{ in }\mathcal D'(\Omega).
\end{equation}  
If  we further assume the following phase-field energy convergence 
\begin{equation}\label{cv energy}
\int_\Omega \left( \frac{(1-v_\varepsilon)^2}{4\varepsilon} + \varepsilon |\nabla v_\varepsilon|^2\right)dx \rightarrow \mathcal{H}^1(J_u)+\mathcal{H}^1(\partial \O \cap \{u\neq g\})=\mathcal{H}^1(\widehat{J_u}),
\end{equation}
then
\begin{multline}\label{inner variation of MS}
\int_\Omega (|\nabla u|^2 {\rm Id} - 2\nabla u \otimes \nabla u) : DX  \; dx  + \int_{\widehat{J_u}} ({\rm Id}-\nu_u \otimes \nu_u) : DX \; d\mathcal{H}^1\\
 = -2\int_{\partial \Omega} (\nabla u \cdot \nu) (X\cdot\nabla g) \, d\mathcal{H}^1 \text{ for all } X\in \C_c^1(\R^2;\R^2) \text{ with } X\cdot \nu=0 \text{ on } \p \Omega.
\end{multline}
\end{theorem}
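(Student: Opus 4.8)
The plan is to pass to the limit in the inner‐variation identity \eqref{variations internes AT} satisfied by the critical points $(u_\e,v_\e)$. The left‐hand side splits into a bulk term involving $(\eta_\e+v_\e^2)\nabla u_\e$ and a phase‐field term involving the stress‐energy–type tensor $2\e\nabla v_\e\otimes\nabla v_\e - \bigl(\tfrac{(1-v_\e)^2}{\e}+\e|\nabla v_\e|^2\bigr)\mathrm{Id}$, while the right‐hand side is a boundary term. First I would record the a priori consequences of the energy bound \eqref{energy bound} and the known convergence \eqref{eq:conv_vers_u}: in particular $v_\e\to 1$ in $L^2$, $\sqrt{\eta_\e+v_\e^2}\,\nabla u_\e$ is bounded in $L^2$, the elliptic equation for $u_\e$ gives $\mathrm{div}\big((\eta_\e+v_\e^2)\nabla u_\e\big)=0$, and a Fatou/Γ-convergence lower bound argument shows $\int_\Omega(\eta_\e+v_\e^2)|\nabla u_\e|^2\to\int_\Omega|\nabla u|^2$ and $(\eta_\e+v_\e^2)\nabla u_\e\otimes\nabla u_\e\to\nabla u\otimes\nabla u$ weakly‐$*$ as measures; combined with boundary regularity (Theorem 3.2 of \cite{BMRa23}) this handles the bulk term and the boundary term on the right of \eqref{variations internes AT}, which converge to the corresponding terms in \eqref{inner variation of MS}. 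The genuinely two‐dimensional, delicate part is the phase‐field term.

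\textbf{The phase‐field varifold.} For the surface term I would introduce the $1$-varifolds naturally attached to the phase field. Write $\mu_\e:=\bigl(\e|\nabla v_\e|^2+\tfrac{(1-v_\e)^2}{4\e}\bigr)\LL^2$ for the phase‐field energy measure and, where $\nabla v_\e\neq 0$, let $P_\e(x):=\mathrm{Id}-\tfrac{\nabla v_\e}{|\nabla v_\e|}\otimes\tfrac{\nabla v_\e}{|\nabla v_\e|}$ be the projection onto the "interface" direction; define $V_\e\in\mathbf V_1(\overline\Omega)$ by $V_\e:=\bigl(\e|\nabla v_\e|^2+\tfrac{(1-v_\e)^2}{4\e}\bigr)\LL^2\otimes\delta_{P_\e(x)}$ (and arbitrarily on $\{\nabla v_\e=0\}$, which carries no mass of the gradient part). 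The discrepancy measure $\xi_\e:=\bigl(\e|\nabla v_\e|^2-\tfrac{(1-v_\e)^2}{4\e}\bigr)\LL^2$ then satisfies $|\xi_\e|(\Omega)\le\mu_\e(\Omega)$ is bounded and, crucially, the convergence hypothesis \eqref{cv energy} forces $\mu_\e(\overline\Omega)\to\HH^1(\widehat{J_u})$. Since $u\in SBV^2(\Omega)$ and the limit energy equals the Mumford–Shah length term, one identifies (via the lower‐semicontinuity inequalities of Ambrosio–Tortorelli type, in the boundary‐adapted form) the weak‐$*$ limit $\|V\|$ of $\|V_\e\|=\mu_\e$ with $\HH^1\res\widehat{J_u}$, i.e. the limiting varifold has no excess mass and is the rectifiable varifold $V=\mathbf v(\widehat{J_u},1,T_x\widehat{J_u})$ carried by the extended jump set with multiplicity one and tangent plane $\nu_u^\perp$. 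Then $\int_{\overline\Omega\times\mathbf G_1} (\mathrm{Id}-e\otimes e):DX\,dV = \int_{\widehat{J_u}}(\mathrm{Id}-\nu_u\otimes\nu_u):DX\,d\HH^1$, which is exactly the surface term in \eqref{inner variation of MS}, and the phase‐field part of \eqref{variations internes AT} is precisely $\delta V_\e(X)+\text{(discrepancy correction)}$.

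\textbf{Killing the discrepancy.} The main obstacle — and the reason the statement is restricted to $N=2$ — is controlling the discrepancy term: the phase‐field part of the inner variation equals $\int_\Omega [2\e\nabla v_\e\otimes\nabla v_\e - \e|\nabla v_\e|^2\mathrm{Id}]:DX\,dx - \int_\Omega \tfrac{(1-v_\e)^2}{\e}\mathrm{Id}:DX\,dx$, which one rewrites as $-\delta V_\e(X)$ up to the term $\int_\Omega \xi_\e\,\mathrm{div}X\,dx$ (here $2\e\nabla v_\e\otimes\nabla v_\e - \e|\nabla v_\e|^2\mathrm{Id} = \e|\nabla v_\e|^2(2P_\e^\perp - \mathrm{Id}) = \e|\nabla v_\e|^2(\mathrm{Id}-2P_\e)$, and combining with $-\tfrac{(1-v_\e)^2}{\e}\mathrm{Id} = -\xi_\e - \tfrac{(1-v_\e)^2}{4\e}\mathrm{Id}\cdot\text{(factor)}$ assembles $\mathrm{Id}-e\otimes e$ against $\mu_\e$ plus a discrepancy term $\xi_\e\,\mathrm{div}X$). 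One must show $\xi_\e\to 0$ strongly in the sense that $\int_\Omega \xi_\e\,\varphi\,dx\to 0$ for all $\varphi\in\C_c(\Omega)$ (and handle the boundary strip separately using $v_\e=1$ on $\partial\Omega$ and the $\C^{2,\alpha}$ boundary regularity). In two dimensions this is where the phase‐field ODE $-\e\Delta v_\e + \tfrac{v_\e-1}{4\e}+v_\e|\nabla u_\e|^2=0$ is exploited: multiplying by suitable test functions and using the $1$D profile structure of $v_\e$ transverse to the level sets, together with the fact that \eqref{cv energy} already pins the total mass and forces concentration of $\mu_\e$ on a $1$-rectifiable set of the right size, yields equipartition $\xi_\e\to 0$. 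The clean‐up steps are then: (i) show the varifold $V_\e$ is asymptotically stationary in $\Omega$ away from the contribution of $\nabla u$, i.e. $\delta V_\e(X)$ converges to $-\int_{\widehat{J_u}}(\mathrm{Id}-\nu_u\otimes\nu_u):DX\,d\HH^1$ modulo the bulk and boundary terms; (ii) identify the limit integrand on the Grassmannian using that $\mu_\e\to\HH^1\res\widehat{J_u}$ with unit density, so that the generalized tangent plane $P_\e$ must align with $\nu_u^\perp$ $\HH^1$-a.e. on $\widehat{J_u}$ (this uses a rectifiability/constancy theorem for the limit varifold, e.g. that a $1$-varifold whose mass is $\HH^1\res\widehat{J_u}$ and which arises as such a limit is $\mathbf v(\widehat{J_u},1)$); (iii) assemble (i)–(ii) with the bulk convergence to obtain \eqref{inner variation of MS}. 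I expect step (iii)'s passage to the limit in the boundary term $2(\eta_\e+1)\int_{\partial\Omega}(\partial_\nu u_\e)(X\cdot\nabla g)\,d\HH^1 \to -2\int_{\partial\Omega}(\nabla u\cdot\nu)(X\cdot\nabla g)\,d\HH^1$ to require the $\C^{2,\alpha}(\overline\Omega)$ estimates and a careful sign bookkeeping ($\partial_\nu u$ with the outward normal versus $\nabla u\cdot\nu$), but the analytic heart of the argument remains the vanishing of the discrepancy $\xi_\e$, which is genuinely two‐dimensional.
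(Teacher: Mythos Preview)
Your proposal contains a genuine gap at the very first step, and it misidentifies both the main obstacle and the reason for the dimensional restriction.

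\medskip

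\textbf{The bulk term does not converge.} You claim that ``a Fatou/$\Gamma$-convergence lower bound argument shows $\int_\Omega(\eta_\e+v_\e^2)|\nabla u_\e|^2\to\int_\Omega|\nabla u|^2$ and $(\eta_\e+v_\e^2)\nabla u_\e\otimes\nabla u_\e\to\nabla u\otimes\nabla u$ weakly-$*$ as measures.'' This is precisely what one \emph{cannot} assert under the hypothesis \eqref{cv energy} alone: only the phase-field energy is assumed to converge, not the full Ambrosio--Tortorelli energy. A lower-semicontinuity argument gives only $\int_\Omega|\nabla u|^2 \le \liminf_\e \int_\Omega(\eta_\e+v_\e^2)|\nabla u_\e|^2$, and nothing in the hypotheses forces equality. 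The whole point of the theorem (and its novelty over the earlier result that assumed full energy convergence) is to cope with the possible presence of a nonzero singular defect measure $\sigma$ in
\[
(\eta_\e+v_\e^2)\nabla u_\e\otimes\nabla u_\e\,\LL^2 \;\overset{*}{\rightharpoonup}\; \nabla u\otimes\nabla u\,\LL^2 \;+\; \mu^s,
\]
and to show that this defect nonetheless disappears from the limiting inner-variation identity. The paper handles this in two stages: first a blow-up argument (using the $L^2$-approximate differentiability of $BV$ functions, valid only in dimension~$2$) shows that $\mu^s$ is singular with respect to $\LL^2$; second, after passing to the limit in \eqref{variations internes AT} one obtains a measure-valued stress tensor $T$ with ${\rm div}\,T\in\mathcal M$, and the De~Philippis--Rindler structure theorem forces the polar of the singular part of $T$ to be rank~$\le 1$. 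In two dimensions, a symmetric trace-free matrix of rank~$\le 1$ vanishes, which kills the ``Cantor'' part $T^c$ and reduces the jump part to an orthogonal projector whose direction is then pinned down by a second blow-up.

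\medskip

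\textbf{The dimensional restriction is not about the discrepancy.} You write that ``the vanishing of the discrepancy $\xi_\e$ \ldots\ is genuinely two-dimensional.'' In fact, equipartition $\xi_\e\to 0$ in $L^1$ follows from the phase-field energy convergence \eqref{cv energy} in any dimension (it is quoted from \cite[Proposition~5.1]{BMRa23}). The two genuinely two-dimensional ingredients are (i) the $L^2$-approximate differentiability of $BV$ functions used in the blow-up for the defect measure, and (ii) the linear-algebra fact that a trace-free symmetric $2\times 2$ matrix of rank~$\le 1$ is zero, which is what makes the De~Philippis--Rindler constraint decisive. Your outline never invokes either of these, so even if the bulk convergence held, your identification of the limit varifold tangent (``the generalized tangent plane $P_\e$ must align with $\nu_u^\perp$'') would still be unjustified: knowing only that $\|V\|=\HH^1\res\widehat{J_u}$ and that the disintegration $V_x$ is a Dirac at some projector $\overline A(x)$ does not by itself determine $\overline A(x)$; the paper identifies the combined density $\Theta$ (which absorbs the contribution of $\mu^j$ as well) via the structure theorem and a blow-up against the stationarity of the limit tensor.
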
 

Note that the first point of  \eqref{eq:conv_vers_u} follows from the compactness part of the \(\Gamma\)-convergence theory for \(AT_\e\), see \cite{AT92,BMRa23}. The second point of \eqref{eq:conv_vers_u} is also a rather direct consequence of the \(\Gamma\)-convergence theory for \(AT_\e\). Indeed from this theory we obtain the weak convergence $(\eta_\e +v_\e^2)\nabla u_\e \wto \nabla u$ in $L^2(\Omega;\R^2)$, and we can pass to the limit in the second equation of \eqref{criticalpointAT} to obtain \(\dive (\nabla u)=0\) in $\mathcal D'(\Omega)$. Note that this condition differs from \eqref{eq:3rd_eq_MS} because test functions are smooth, and not discontinuous across $\widehat{J_u}$. Hence, this condition does not suffice to claim that $u$ is a critical point of \(MS\) for the outer variations because the Neumann condition on $\widehat{J_u}$, even in a weak sense, is not satisfied. We leave as an open problem to determine if \eqref{eq:3rd_eq_MS} is satisfied by limits of critical points of the AT energy satisfying the assumptions of Theorem \ref{Theorem 1}, see also Remark \ref{rk_2_Anzellotti}. Finally, the validity of condition \eqref{inner variation of MS} actually shows that $u$ a critical point of $MS$ for the inner variations.

We point out that, whereas the main assumption in \cite[Theorem 1.2]{BMRa23} was 
\begin{equation}\nonumber
AT_\e(u_\e,v_\e) \xrightarrow[\e \to 0]{} MS(u)
\end{equation}
we are able to obtain the same conclusion with the weaker assumption \eqref{cv energy}. However our result is restricted to the two-dimensional case.  We would like to stress that this result does not amount to proving the convergence of the whole energy from the only convergence of the phase-field variable. In fact, we will see in Proposition \ref{prop:sigma} that the elastic term $(\eta_\varepsilon +v_\varepsilon^2)|\nabla u_\varepsilon|^2$ of $AT_\varepsilon$ does not necessarily converge to the elastic term $|\nabla u|^2$ of $MS$ in the sense of measures, and that a singular defect measure might arise. In particular, the lack of strong $L^2(\Omega;\R^2)$-convergence of $\{(\eta_\e+v_\e^2)\nabla u_\e\}_{\e>0}$ to $\nabla u$ is an obstacle to pass to the limit in the second inner variation as in \cite[Theorem 1.3]{BMRa23}.

\medskip

To complete this section, let us explain the strategy of proof of \autoref{Theorem 1} and the organization of the paper. As expected, one attempts to pass to the limit in \eqref{variations internes AT} as $\varepsilon \to 0$. To do so, we first use some consequences of the compactness in the \(\Gamma\)-convergence theory for \(AT_\e\) to $MS$, and of the convergence of the phase-field energy \eqref{cv energy} in Section \ref{sec:prel_res}. They lead to the so-called equi-partition of the (phase-field) energy principle and the identification of the limiting measure of the phase-field density as the one-dimensional Hausdorff measure restricted to the jump set \(\widehat{J_u}\). We also explain how to associate a natural  varifold to the phase-field variable \(v_\e\) following the ideas of  \cite{T02}.  In Section \ref{sec:conv_dirichlet}, we present one of our new ingredients which consists in a finer analysis of the sequence $\{(\eta_\varepsilon + v_\varepsilon^2)\nabla u_\varepsilon \otimes \nabla u_\varepsilon \}_{\varepsilon>0}$. Assumption \eqref{energy bound} implies that this sequence is bounded, so that, up to an extraction, there exists a matrix-valued measure  $\mu$ such that 
\begin{equation}\label{def de mu}
(\eta_\varepsilon + v_\varepsilon^2)\nabla u_\varepsilon \otimes \nabla u_\varepsilon  \mathcal{L}^2 \res \Omega \overset{\star}{\rightharpoonup} \mu \quad \text{ in }\mathcal M(\Omega;\mathbb M^{2 \times 2}_{\rm sym}).
\end{equation}
In Propositions \ref{prop:sigma} and  \ref{cv xor}, we perform a fine analysis of the defect measure $\mu$ by showing that its Lebesgue-absolutely continuous part is given by  \(\nabla u \otimes \nabla u \mathcal{L}^2\res \Omega\). This is achieved by means of a blow-up argument around convenient approximate differentiability points of $u$, which allows us to reduce to the case where the limit function $u$ is actually an affine function. Then the convexity of $AT_\e$ with respect to the phase-field variable enables one to use the equivalence between minimality and criticality with respect to $v$ and then, improve in that specific case the weak $L^2$-convergence of the term $\{\sqrt{\eta_\e+v_\e^2}\nabla u_\e\}_{\e>0}$ to $\nabla u$ into a strong $L^2$-convergence.  We observe that the blow-up argument requires the \(L^2\)-approximate differentiability of \(BV\) functions which holds only in dimension 2. This is one of the reason why our proof is restricted to the dimension $N=2$. Finally Section \ref{sec:proof of Th1} completes the proof of the inner variations convergence thanks to a result of De Phillipis-Rindler \cite{DePhilippis_Rindler_2016} on the singular part of measures satisfying a linear PDE. To this aim, we introduce the stress-energy tensor
$$T_\e:=(\eta_\varepsilon + v_\varepsilon^2)(2 \nabla u_\varepsilon\otimes \nabla u_\varepsilon - |\nabla u_\varepsilon|^2 {\rm Id}) + 2\e\nabla v_\varepsilon \otimes \nabla v_\varepsilon -\left(\frac{(1-v_\e)^2}{\varepsilon} + \varepsilon |\nabla v_\e|^2\right){\rm Id}.$$
According to \eqref{variations internes AT}, $T_\e$  is a divergence measure matrix-valued function in $\Omega$. The energy bound \eqref{energy bound} ensures that $T_\e \overset{\star}{\rightharpoonup} T$ weakly$^\star$ as measures in $\Omega$, for some $T \in \mathcal M(\Omega;\mathbb M^{2 \times 2}_{\rm sym})$ which remains divergence free. Applying the result of  \cite{DePhilippis_Rindler_2016} implies that the singular part of $T$ has a polar taking values into the set of singular matrices. In particular, it ensures that the singular part of $T$ with respect to the Lebesgue measure must be absolutely continuous with respect to  \(\mathcal{H}^1 \res \widehat{J_u}\). The density of $T$ with respect to $\HH^1\res\widehat{J_u}$ is then characterized by means of a blow-up argument. We then employ the previous information on the absolutely continuous part of \(\mu\) with respect to the Lebesgue measure to deduce that the limit map \(u\) in Theorem \ref{Theorem 1} does satisfy \eqref{inner variation of MS}.

\section{Preliminary results}\label{sec:prel_res}

We first show that the energy bound assumption \eqref{energy bound} provides compactness properties leading to \eqref{eq:conv_vers_u}.

\begin{lemma}\label{cv faible vgradu}
Let $\{(u_\varepsilon , v_\varepsilon)\}_{\e>0} \subset \A_g$ be a sequence of critical points of $AT_\e$ satisfying \eqref{energy bound}. Then, up to a subsequence (not relabeled), there exists $u \in SBV^2(\Omega)$ such that $(u_\e,v_\e) \to (u,1)$ in $[L^2(\Omega)]^2$, 
\begin{equation}
\sqrt{v_\varepsilon ^2 +\eta_\varepsilon} \nabla u_\varepsilon \rightharpoonup \nabla u \quad \text{in } \, L^2(\Omega;\R^2).
\end{equation}
and 
$${\rm div}(\nabla u)=0 \quad \text{ in }\mathcal D'(\Omega).$$
\end{lemma}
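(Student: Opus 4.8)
The plan is to extract from the energy bound \eqref{energy bound} the standard compactness coming from $\Gamma$-convergence theory for $AT_\e$, then upgrade the weak limit of the Dirichlet-type term to an equation. Concretely, I would proceed in three stages.

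\textbf{Step 1: Compactness of $(u_\e,v_\e)$ and identification of the limit in $SBV^2$.}
From \eqref{energy bound}, the phase-field energy $\int_\Omega(\e|\nabla v_\e|^2 + (1-v_\e)^2/(4\e))\,dx$ is bounded; combined with the boundary condition $v_\e=1$ on $\partial\Omega$ and a truncation argument (one may assume $0\le v_\e\le 1$ since the truncation does not increase the energy, or invoke the maximum principle from \eqref{criticalpointAT}), this forces $v_\e\to 1$ in $L^2(\Omega)$, indeed in $L^p(\Omega)$ for every $p<\infty$. For $u_\e$, the quantity $\int_\Omega(\eta_\e+v_\e^2)|\nabla u_\e|^2\,dx$ is bounded, and the Ambrosio--Tortorelli compactness theorem (see \cite{AT92,BMRa23}) applied to the bounded sequence $AT_\e(u_\e,v_\e)$ — after possibly modifying $u_\e$ on the small set $\{v_\e<\delta\}$ and controlling the boundary term via $u_\e=g$ on $\partial\Omega$ with $g\in H^{1/2}(\partial\Omega)$ — yields, up to a subsequence, a limit $u\in SBV^2(\Omega)$ with $u_\e\to u$ in $L^2(\Omega)$. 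I would also record that $\nabla u_\e \mathbf 1_{\{v_\e>\delta\}}$ (or equivalently $\sqrt{\eta_\e+v_\e^2}\,\nabla u_\e$) is bounded in $L^2(\Omega;\R^2)$, so up to a further subsequence it converges weakly in $L^2$ to some field; the content of the AT compactness result is precisely that this weak limit is the approximate gradient $\nabla u$. Hence $\sqrt{v_\e^2+\eta_\e}\,\nabla u_\e\rightharpoonup\nabla u$ in $L^2(\Omega;\R^2)$.

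\textbf{Step 2: Passing to the limit in the second equation of \eqref{criticalpointAT}.}
The second line of \eqref{criticalpointAT} states that $\dive\big((\eta_\e+v_\e^2)\nabla u_\e\big)=0$ in $\mathcal D'(\Omega)$, i.e. $\int_\Omega(\eta_\e+v_\e^2)\nabla u_\e\cdot\nabla\varphi\,dx=0$ for all $\varphi\in \C_c^\infty(\Omega)$. I want to pass to the limit in this identity. Write $(\eta_\e+v_\e^2)\nabla u_\e = \sqrt{\eta_\e+v_\e^2}\cdot\big(\sqrt{\eta_\e+v_\e^2}\,\nabla u_\e\big)$. The second factor converges weakly in $L^2(\Omega;\R^2)$ to $\nabla u$ by Step 1, while the first factor $\sqrt{\eta_\e+v_\e^2}$ converges to $1$ strongly in $L^p(\Omega)$ for every $p<\infty$ (since $0\le v_\e\le 1$, $\eta_\e\to 0$, and $v_\e\to 1$ in measure, dominated convergence applies). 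The product of a strongly $L^p$-convergent sequence and a weakly $L^2$-convergent sequence converges weakly in $L^1_{\mathrm{loc}}$ (testing against $\nabla\varphi\in \C_c^\infty(\Omega;\R^2)$, which is bounded with compact support, suffices: $\int \sqrt{\eta_\e+v_\e^2}\,(\sqrt{\eta_\e+v_\e^2}\,\nabla u_\e)\cdot\nabla\varphi\,dx \to \int \nabla u\cdot\nabla\varphi\,dx$, splitting $\sqrt{\eta_\e+v_\e^2}=1+(\sqrt{\eta_\e+v_\e^2}-1)$ and noting the error term is the integral of an $L^2$-bounded sequence against an $L^2$-strongly-null sequence on the compact support of $\varphi$). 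Therefore $\int_\Omega\nabla u\cdot\nabla\varphi\,dx = 0$ for all $\varphi\in\C_c^\infty(\Omega)$, that is $\dive(\nabla u)=0$ in $\mathcal D'(\Omega)$.

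\textbf{Main obstacle.} The routine analytic steps above (truncation, $L^p$-convergence of $v_\e$, weak$\times$strong products) are standard; the genuine content — and the only place where real work happens — is invoking the Ambrosio--Tortorelli compactness theorem correctly in the present setting \emph{with} the Dirichlet boundary condition, to guarantee that the weak $L^2$-limit of $\sqrt{\eta_\e+v_\e^2}\,\nabla u_\e$ is exactly $\nabla u$ (the approximate gradient of the limit $SBV^2$ function) and not merely \emph{some} $L^2$ field, and that $u\in SBV^2(\Omega)$ with the expected bounds on $\HH^1(J_u)$ and $\|\nabla u\|_{L^2}$. I would handle the boundary by extending $(u_\e,v_\e)$ suitably across $\partial\Omega$ using $g$ (as in the construction of $\hat\varphi$ earlier in the paper), reducing to the interior compactness statement of \cite{BMRa23}. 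Once that identification is in hand, Step 2 closes the proof immediately.
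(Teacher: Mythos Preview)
Your proposal is correct and follows the same approach as the paper; the only difference is one of presentation rather than substance. Where you invoke the Ambrosio--Tortorelli compactness theorem as a black box (with the correct observation that its content includes the identification of the weak $L^2$-limit of $\sqrt{\eta_\e+v_\e^2}\,\nabla u_\e$ with the approximate gradient $\nabla u$), the paper unpacks this step explicitly: it uses the maximum principle from \cite[Lemma~3.1]{BMRa23} to get $0\le v_\e\le1$ and $\|u_\e\|_{L^\infty}\le\|g\|_{L^\infty}$, then applies the coarea formula to select a good level set $A_\e=\{v_\e>t_\e\}$ with controlled perimeter, truncates to $\hat u_\e=u_\e\mathbf 1_{A_\e}\in SBV^2(\Omega)$, applies Ambrosio's SBV compactness theorem \cite[Theorem~4.8]{AFP00} directly, and finally verifies $\sqrt{\eta_\e+v_\e^2}\,\nabla u_\e\rightharpoonup\nabla u$ by a three-term splitting (inside $A_\e$, outside $A_\e$, plus the $\nabla\hat u_\e$ term). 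Your Step~2 passing to the limit in the second equation of \eqref{criticalpointAT} via the weak--strong product is exactly what the paper does.
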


\begin{proof}
First of all, the energy bound immediately implies that $v_\e \to 1$ in $L^2(\Omega)$.

Next according to the maximum principle (see \cite[Lemma 3.1]{BMRa23}), we have $0 \leq v_\e \leq 1$ in $\Omega$, and $\|u_\e\|_{L^\infty(\Omega)} \leq \|g\|_{L^\infty(\partial \Omega)}$. In particular, the sequence $\{u_\e\}_{\e>0}$ is bounded in $L^\infty(\Omega)$.

Let $0<a<b<1$, we use Young's inequality and the coarea formula to get that
\begin{align}
AT_\e(u_\e,v_\e) &\geq \int_\Omega v_\e^2 |\nabla u_\e|^2\, dx + \int_\Omega (1-v_\e)|\nabla v_\e|\, dx\\
& \geq \int_\Omega v_\e^2 |\nabla u_\e|^2\, dx+\int_a^b (1-t)\HH^{1}(\partial^\star \{v_\e>t\})\, dt.
\end{align}
By the mean value Theorem, there exists $t_\e \in (a,b)$ such that the set $A_\e=\{v_\e>t_\e\}$ has finite perimeter in $\Omega$ and
$$AT_\e(u_\e,v_\e) \geq t_\e^2 \int_{A_\e} |\nabla u_\e|^2 \, dx + \left[\frac{(1-a)^2}{2}-\frac{(1-b)^2}{2}\right]  \HH^1(\partial^\star A_\e).$$

Let $\hat u_\e:=u_\e {\bf 1}_{A_\e}$. According to \cite[Theorem 3.84]{AFP00}, $\hat u_\e \in SBV^2(\Omega)$ with $\nabla \hat u_\e=\nabla u_\e {\bf 1}_{A_\e}$ and $J_{\hat u_\e} \subset \partial^\star A_\e$ so that, using $t_\e>a$,
$$\int_\Omega |\nabla \hat u_\e|^2\, dx + \HH^1(J_{\hat u_\e}) \leq C_{a,b},$$
for some constant $C_{a,b}>0$. Moreover, recalling that $\|u_\e\|_{L^\infty(\Omega)} \leq \|g\|_{L^\infty(\partial \Omega)}$, it follows that the sequence $\{\hat u_\e\}_{\e>0}$ is also bounded in $L^\infty(\Omega)$. We are thus in position to apply Ambrosio's compactness Theorem (Theorem 4.8 in \cite{AFP00}) which ensures, up to the extraction of a subsequence (not relabelled), the existence of $u \in SBV^2(\Omega) \cap L^\infty(\Omega)$ such that $\hat u_\e \to u$ in $L^2(\Omega)$, $\|u\|_{L^\infty(\Omega)} \leq \|g\|_{L^\infty(\partial \Omega)}$ and $\nabla \hat u_\e \wto \nabla u$ in $L^2(\Omega;\R^2)$.

According to the Chebychev inequality, we have
$$\LL^2(\Omega \setminus A_\e) \leq \frac{1}{(1-t_\e)^2}\int_\Omega (1-v_\e)^2\, dx \leq \frac{1}{(1-a)^2}\int_\Omega (1-v_\e)^2\, dx \to 0,$$
where we used the energy bound \eqref{energy bound}. As a consequence,
$$\|u_\e-u\|_{L^2(\Omega)} \leq \|u_\e-\hat u_\e\|_{L^2(\Omega)} + \|\hat u_\e-u\|_{L^2(\Omega)} \leq 2\|g\|_{L^\infty(\partial\Omega)} \LL^2(\Omega \setminus A_\e)+ \|\hat u_\e-u\|_{L^2(\Omega)}\to 0,$$
hence $u_\e \to u$ in $L^2(\Omega)$. Similarly, for all $\varphi \in \mathcal C^\infty_c(\Omega;\R^2)$, we have
\begin{multline}\label{eq1}
\int_\Omega (\sqrt{\eta_\e+v_\e^2} \nabla u_\e - \nabla u) \cdot \varphi\, dx  = \int_\Omega (\nabla \hat u_\e - \nabla u) \cdot \varphi\, dx\\
+ \int_{\Omega \setminus A_\e} \sqrt{\eta_\e+v_\e^2} \nabla u_\e \cdot \varphi\, dx+ \int_{A_\e} (\sqrt{\eta_\e+v_\e^2} \nabla u_\e - \nabla u_\e) \cdot \varphi\, dx.
\end{multline}
On the one hand, we have 
\begin{equation}\label{eq2}
\int_\Omega (\nabla \hat u_\e - \nabla u) \cdot \varphi\, dx \to 0.
\end{equation}
On the other hand, thanks to the Cauchy-Schwarz inequality,
\begin{equation}\label{eq3}
\left| \int_{\Omega\setminus A_\e} \sqrt{\eta_\e+v_\e^2} \nabla u_\e \cdot \varphi\, dx\right| \leq \|\varphi\|_{L^\infty(\Omega)} \left\|\sqrt{\eta_\e+v_\e^2}\nabla u_\e\right\|_{L^2(\Omega)} \LL^2(\Omega \setminus A_\e)^{1/2} \to 0,
\end{equation}
where we used once more the energy bound. Finally, since $v_\e >t_\e>a>0$ in $A_\e$ and $\{\sqrt{\eta_\e+v_\e^2}\nabla u_\e\}_{\e>0}$ is bounded in $L^2(\Omega;\R^2)$, we infer that $\{\|\nabla u_\e\|_{L^2(A_\e)}\}_{\e>0}$ is bounded. Thus, using again the Cauchy-Schwarz inequality and the energy bound, we get that
\begin{equation}\label{eq4}
\left| \int_{A_\e} (\sqrt{\eta_\e+v_\e^2} \nabla u_\e - \nabla u_\e) \cdot \varphi\, dx\right|\leq \|\varphi\|_{L^\infty(\Omega)} \|\nabla u_\e\|_{L^2(A_\e)} \left\|\sqrt{\eta_\e+v_\e^2}-1\right\|_{L^2(\Omega)} \to 0.
\end{equation}
Gathering \eqref{eq1}--\eqref{eq4} together with the boundedness of $\{\sqrt{\eta_\e+v_\e^2} \nabla u_\e\}_{\e>0}$ in $L^2(\Omega;\R^2)$, we conclude that $\sqrt{\eta_\e+v_\e^2} \nabla u_\e \wto \nabla u$ weakly in $L^2(\Omega;\R^2)$.

\medskip

Since $\sqrt{\eta_\e+v_\e^2} \to 1$ in $L^2(\Omega)$ and ${\rm div}((\eta_\e+v_\e^2)\nabla u_\e)=0$ in $\mathcal D'(\Omega)$, we deduce that ${\rm div}(\nabla u)=0$ in $\mathcal D'(\Omega)$.
\end{proof}

The phase-field energy convergence assumption \eqref{cv energy} has several consequences, the main one being the so called equi-partition of the energy (see \cite[Proposition 5.1]{BMRa23}).

\begin{proposition}\label{equipartition energy}
Let $\{(u_\varepsilon , v_\varepsilon)\}_{\e>0}$ be a sequence of critical points of $AT_\varepsilon$ satisfying the assumptions of Theorem \ref{Theorem 1}. We define the discrepancy 
$$\xi_\varepsilon = \varepsilon |\nabla v_\varepsilon|^2 - \frac{(1-v_\varepsilon)^2}{4\varepsilon}.$$ 
Then $\xi_\varepsilon \rightarrow 0$ in $L^1(\Omega)$ and setting
\begin{equation}\label{def we}
w_\varepsilon = \Phi(v_\varepsilon) \;\;\; \text{with} \; \Phi(t) = t-t^2/2,
\end{equation} 
then
\[|\nabla w_\varepsilon | \mathcal{L}^2\res \Omega \overset{\star}{\rightharpoonup} \mathcal{H}^1 \res \widehat{J_u} \;\;\; \text{weakly}^\star \text{ in } \mathcal{M}(\overline{\Omega}).\]
\end{proposition}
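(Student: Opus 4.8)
The plan is to reduce the whole statement to the single quantitative fact that the Young inequality deficit
\[
D_\varepsilon := \int_\Omega \Bigl( \sqrt{\varepsilon}\,|\nabla v_\varepsilon| - \tfrac{1-v_\varepsilon}{2\sqrt{\varepsilon}} \Bigr)^{\!2} \dd x
\]
tends to $0$ as $\varepsilon\to0$. Since $0\le v_\varepsilon\le1$ by the maximum principle (\cite[Lemma 3.1]{BMRa23}) and $\Phi'(t)=1-t\ge0$ on $[0,1]$, one has $|\nabla w_\varepsilon|=(1-v_\varepsilon)|\nabla v_\varepsilon|$, whence the two pointwise identities
\[
\begin{aligned}
&\varepsilon|\nabla v_\varepsilon|^2 + \frac{(1-v_\varepsilon)^2}{4\varepsilon} - |\nabla w_\varepsilon| = \Bigl( \sqrt{\varepsilon}\,|\nabla v_\varepsilon| - \tfrac{1-v_\varepsilon}{2\sqrt{\varepsilon}} \Bigr)^{\!2}\ge 0,\\
&|\xi_\varepsilon| = \Bigl| \sqrt{\varepsilon}\,|\nabla v_\varepsilon| - \tfrac{1-v_\varepsilon}{2\sqrt{\varepsilon}} \Bigr| \, \Bigl( \sqrt{\varepsilon}\,|\nabla v_\varepsilon| + \tfrac{1-v_\varepsilon}{2\sqrt{\varepsilon}} \Bigr).
\end{aligned}
\]
Integrating the first identity gives $D_\varepsilon = \int_\Omega \bigl( \varepsilon|\nabla v_\varepsilon|^2 + \tfrac{(1-v_\varepsilon)^2}{4\varepsilon} \bigr)\dd x - \int_\Omega |\nabla w_\varepsilon|\dd x$; integrating the second, together with the Cauchy--Schwarz inequality and $(a+b)^2\le 2(a^2+b^2)$, gives $\int_\Omega |\xi_\varepsilon|\dd x \le \sqrt{2}\, D_\varepsilon^{1/2} \bigl( \int_\Omega (\varepsilon|\nabla v_\varepsilon|^2 + \tfrac{(1-v_\varepsilon)^2}{4\varepsilon})\dd x \bigr)^{1/2} \le C\, D_\varepsilon^{1/2}$ by the energy bound \eqref{energy bound}. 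Hence $D_\varepsilon\to0$ already implies $\xi_\varepsilon\to0$ in $L^1(\Omega)$; it also provides the upper bound $\int_\Omega |\nabla w_\varepsilon|\dd x \le \int_\Omega (\varepsilon|\nabla v_\varepsilon|^2 + \tfrac{(1-v_\varepsilon)^2}{4\varepsilon})\dd x \to \HH^1(\widehat{J_u})$ thanks to \eqref{cv energy}.

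To prove $D_\varepsilon\to0$ I would match this upper bound with the corresponding lower bound on $\int_\Omega|\nabla w_\varepsilon|\dd x$, which is precisely the Modica--Mortola/Ambrosio--Tortorelli $\Gamma$-$\liminf$ inequality. Using the slicing argument underlying the lower-bound part of the $\Gamma$-convergence of $AT_\varepsilon$ (\cite{AT92}), in the form that takes into account the Dirichlet datum and the boundary jump set $\partial\Omega\cap\{u\ne g\}$ (\cite{BMRa23}), and recalling from Lemma \ref{cv faible vgradu} that $u_\varepsilon\to u$ and $v_\varepsilon\to1$ in $L^2(\Omega)$, one obtains, for every relatively open set $A\subseteq\overline\Omega$,
\[
\liminf_{\varepsilon\to0}\int_A |\nabla w_\varepsilon|\dd x \ \ge\ \HH^1\bigl(\widehat{J_u}\cap A\bigr).
\]
(Heuristically, near each point of $\widehat{J_u}$ met by a slice the profile $v_\varepsilon$ must drop from a value close to $1$ to a value close to $0$ and return, so that $w_\varepsilon=\Phi(v_\varepsilon)$ has variation at least $2(\Phi(1)-\Phi(0))-o(1)=1-o(1)$ along that slice; one then integrates over the slices and optimizes over directions.) Taking $A=\overline\Omega$ and comparing with the upper bound of the previous paragraph forces $\int_\Omega|\nabla w_\varepsilon|\dd x\to\HH^1(\widehat{J_u})$, hence $D_\varepsilon\to0$.

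It remains to upgrade this into the weak$^\star$ convergence of $\lambda_\varepsilon:=|\nabla w_\varepsilon|\,\LL^2\res\Omega$. The relation $D_\varepsilon\to0$ means exactly that the nonnegative measures $\rho_\varepsilon := \sigma_\varepsilon - \lambda_\varepsilon$, with $\sigma_\varepsilon := \bigl(\varepsilon|\nabla v_\varepsilon|^2 + \tfrac{(1-v_\varepsilon)^2}{4\varepsilon}\bigr)\LL^2\res\Omega$, tend to $0$ in total variation; hence $\lambda_\varepsilon$ and $\sigma_\varepsilon$ --- both bounded in $\M(\overline\Omega)$ by \eqref{energy bound} --- have the same weak$^\star$ cluster points. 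Let $\sigma_0$ be one of them, say $\lambda_{\varepsilon_k}\wsto\sigma_0$ (equivalently $\sigma_{\varepsilon_k}\wsto\sigma_0$). For every relatively open $A\subseteq\overline\Omega$ and every relatively open $A'$ with $\overline{A'}\subseteq A$, the localized lower bound of the previous paragraph together with the upper semicontinuity of mass on compact sets under weak$^\star$ convergence give
\[
\HH^1(\widehat{J_u}\cap A') \le \liminf_k\lambda_{\varepsilon_k}(A') \le \limsup_k\lambda_{\varepsilon_k}(\overline{A'}) \le \sigma_0(\overline{A'}) \le \sigma_0(A);
\]
letting $A'\uparrow A$ and then invoking outer regularity of $\HH^1\res\widehat{J_u}$ yields $\sigma_0\ge\HH^1\res\widehat{J_u}$ as Borel measures on $\overline\Omega$. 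On the other hand $\sigma_0(\overline\Omega)=\lim_k\sigma_{\varepsilon_k}(\overline\Omega)=\HH^1(\widehat{J_u})$ by \eqref{cv energy}, so $\sigma_0-\HH^1\res\widehat{J_u}$ is a nonnegative measure of zero total mass, i.e.\ $\sigma_0=\HH^1\res\widehat{J_u}$. Since every weak$^\star$ cluster point of $\lambda_\varepsilon$ equals $\HH^1\res\widehat{J_u}$, the whole sequence converges: $\lambda_\varepsilon\wsto\HH^1\res\widehat{J_u}$, which is the assertion. The one genuinely hard input in this scheme is the localized $\Gamma$-$\liminf$ bound of the second paragraph --- the Ambrosio--Tortorelli lower bound up to the boundary --- the remaining steps being elementary inequalities and standard measure theory.
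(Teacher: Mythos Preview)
Your argument is correct. The paper does not provide its own proof of this proposition; it simply refers to \cite[Proposition 5.1]{BMRa23}. Your reconstruction is the standard one and is presumably what is carried out there: write the Young deficit $D_\varepsilon$, observe the two pointwise identities relating $D_\varepsilon$, $|\nabla w_\varepsilon|$ and $|\xi_\varepsilon|$, and then match the phase-field energy convergence \eqref{cv energy} against the localized $\Gamma$--$\liminf$ inequality (up to the boundary) to force $D_\varepsilon\to0$; the weak$^\star$ convergence of $|\nabla w_\varepsilon|\,\LL^2\res\Omega$ then follows by the ``open sets + total mass'' argument (which is precisely \cite[Proposition~1.80]{AFP00}, the reference the paper invokes one line below for Proposition~\ref{cvmesureÃ©nergie}).

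Two minor remarks. First, in the last paragraph, when you pass from $\HH^1(\widehat{J_u}\cap A')\le\sigma_0(A)$ for all $A'\Subset A$ to $\HH^1(\widehat{J_u}\cap A)\le\sigma_0(A)$, you are using \emph{inner} regularity of the Radon measure $\HH^1\res\widehat{J_u}$ on the open set $A$, not outer regularity; once you have the inequality on all relatively open sets and equality of total masses, equality of the measures follows (again \cite[Proposition~1.80]{AFP00}). Second, as you correctly flag, the only substantive input is the localized lower bound $\liminf_\varepsilon\int_A|\nabla w_\varepsilon|\,dx\ge\HH^1(\widehat{J_u}\cap A)$ for relatively open $A\subset\overline\Omega$, including sets that meet $\partial\Omega$ so as to capture the boundary jump set $\partial\Omega\cap\{u\ne g\}$; this boundary version is in \cite{BMRa23} and is exactly what the paper is relying on.
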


The equi-partition of the energy is crucial in \cite{BMRa23} to pass to the limit in \eqref{variations internes AT}. Combining \cite[Proposition 1.80]{AFP00} and  \eqref{cv energy}, we find the following convergence.

\begin{proposition}\label{cvmesureÃ©nergie}
Let $\{(u_\varepsilon , v_\varepsilon)\}_{\e>0}$ be a sequence of critical points of $AT_\varepsilon$ satisfying the assumptions of Theorem \ref{Theorem 1}. Then
\[\left(\frac{(1-v_\varepsilon)^2}{4\varepsilon} + \varepsilon |\nabla v_\varepsilon|^2 \right) \mathcal{L}^2 \res \Omega \overset{\star}{\rightharpoonup} \mathcal{H}^1 \res \widehat{J_u} \;\;\;  \text{weakly}^\star \text{ in } \mathcal{M}(\overline{\Omega}).\]
\end{proposition}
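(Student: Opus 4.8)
The plan is to identify the weak\({}^\star\) limit of the full phase-field density
\[e_\varepsilon := \frac{(1-v_\varepsilon)^2}{4\varepsilon} + \varepsilon|\nabla v_\varepsilon|^2\]
by comparing it with the density \(|\nabla w_\varepsilon|\) already controlled in Proposition \ref{equipartition energy}, and showing that the two differ by an \(L^1(\Omega)\)-null sequence.

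First I would record the elementary pointwise identity. Since \(w_\varepsilon=\Phi(v_\varepsilon)\) with \(\Phi(t)=t-t^2/2\), one has \(\nabla w_\varepsilon=(1-v_\varepsilon)\nabla v_\varepsilon\), and because \(0\le v_\varepsilon\le 1\) by the maximum principle we get \(|\nabla w_\varepsilon|=(1-v_\varepsilon)|\nabla v_\varepsilon|\). Completing the square,
\[e_\varepsilon-|\nabla w_\varepsilon| = \frac{(1-v_\varepsilon)^2}{4\varepsilon}+\varepsilon|\nabla v_\varepsilon|^2-(1-v_\varepsilon)|\nabla v_\varepsilon| = \left(\sqrt{\varepsilon}\,|\nabla v_\varepsilon|-\frac{1-v_\varepsilon}{2\sqrt{\varepsilon}}\right)^2\ \ge\ 0,\]
so that \(e_\varepsilon\mathcal{L}^2\res\Omega\ge|\nabla w_\varepsilon|\mathcal{L}^2\res\Omega\) as measures and \(\int_\Omega(e_\varepsilon-|\nabla w_\varepsilon|)\,dx=\int_\Omega e_\varepsilon\,dx-\int_\Omega|\nabla w_\varepsilon|\,dx\).

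Next I would pass to the limit in this difference of masses. By the phase-field energy convergence \eqref{cv energy}, \(\int_\Omega e_\varepsilon\,dx\to\mathcal{H}^1(\widehat{J_u})\). For the second term, I would test the weak\({}^\star\) convergence \(|\nabla w_\varepsilon|\mathcal{L}^2\res\Omega\overset{\star}{\rightharpoonup}\mathcal{H}^1\res\widehat{J_u}\) of Proposition \ref{equipartition energy} against the constant function \(\mathbf 1\in\mathcal{C}(\overline{\Omega})\) — admissible because \(\overline{\Omega}\) is compact — which gives \(\int_\Omega|\nabla w_\varepsilon|\,dx\to(\mathcal{H}^1\res\widehat{J_u})(\overline{\Omega})=\mathcal{H}^1(\widehat{J_u})\). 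Hence \(\int_\Omega(e_\varepsilon-|\nabla w_\varepsilon|)\,dx\to 0\), and since the integrand is nonnegative, \(e_\varepsilon-|\nabla w_\varepsilon|\to 0\) in \(L^1(\Omega)\). Consequently \(e_\varepsilon\mathcal{L}^2\res\Omega\) and \(|\nabla w_\varepsilon|\mathcal{L}^2\res\Omega\) have the same weak\({}^\star\) limit in \(\mathcal{M}(\overline{\Omega})\), namely \(\mathcal{H}^1\res\widehat{J_u}\), which is the claim.

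There is no serious obstacle here; the only point to watch is that \(e_\varepsilon\mathcal{L}^2\res\Omega\) and \(|\nabla w_\varepsilon|\mathcal{L}^2\res\Omega\) live on the open set \(\Omega\) while the limit measure also charges \(\partial\Omega\) through the boundary part \(\partial\Omega\cap\{u\ne g\}\) of \(\widehat{J_u}\); this is why one works throughout in \(\mathcal{M}(\overline{\Omega})\) and relies on the fact that the total masses match in the limit. Equivalently, one may instead extract a subsequence with \(e_\varepsilon\mathcal{L}^2\res\Omega\overset{\star}{\rightharpoonup}\nu\), use \(e_\varepsilon\ge|\nabla w_\varepsilon|\) to get \(\nu\ge\mathcal{H}^1\res\widehat{J_u}\), and then combine \eqref{cv energy} with \cite[Proposition 1.80]{AFP00} to obtain \(\nu(\overline{\Omega})=\mathcal{H}^1(\widehat{J_u})=(\mathcal{H}^1\res\widehat{J_u})(\overline{\Omega})\); a nonnegative measure \(\nu-\mathcal{H}^1\res\widehat{J_u}\) of vanishing total mass must vanish, so \(\nu=\mathcal{H}^1\res\widehat{J_u}\), and the limit being independent of the subsequence, the full sequence converges.
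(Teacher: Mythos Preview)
Your argument is correct and is essentially the same as the paper's: the paper merely records that the result follows by ``combining \cite[Proposition 1.80]{AFP00} and \eqref{cv energy}'', which is exactly your second variant (extract a subsequential limit $\nu$, use $e_\varepsilon\ge|\nabla w_\varepsilon|$ together with Proposition~\ref{equipartition energy} to get $\nu\ge\mathcal H^1\res\widehat{J_u}$, then match total masses via \eqref{cv energy}). Your first variant, showing directly that $e_\varepsilon-|\nabla w_\varepsilon|\to 0$ in $L^1(\Omega)$ from the same ingredients, is an equally clean way to phrase the same idea.
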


The energy bound also implies  a weak-convergence result for the boundary terms, which corresponds to \cite[Lemma 4.1]{BMRa23}.

\begin{lemma}\label{boundary term BMR}
Let $\{(u_\varepsilon , v_\varepsilon)\}_{\e>0}$ be a sequence of critical points of $AT_\varepsilon$ satisfying the assumptions of Theorem \ref{Theorem 1}. Up to a further subsequence (not relabeled), $\partial_\nu u_\varepsilon \rightharpoonup \nabla u \cdot \nu$ in $L^2(\partial \Omega)$. Moreover, there exists a non-negative boundary Radon measure $m \in \mathcal{M}(\partial \Omega)$ such that

\[\left[|\partial_\nu u_\varepsilon|^2 + \varepsilon |\partial_\nu v_\varepsilon|^2 \right]\mathcal{H}^1 \res \partial \Omega \overset{\star}{\rightharpoonup} m \quad \text{weakly}^\star \text{ in }\mathcal{M}(\partial\Omega).\]
\end{lemma}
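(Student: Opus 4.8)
The plan is to transfer information from the bulk to the boundary using the $\mathcal C^{2,\alpha}(\overline\Omega)$-regularity of $(u_\e,v_\e)$ and to exploit the energy bound \eqref{energy bound} together with the elliptic system \eqref{criticalpointAT}. First I would establish the boundedness of $\{\partial_\nu u_\e\}_{\e>0}$ in $L^2(\partial\Omega)$. Since $v_\e=1$ on $\partial\Omega$, the natural weighted gradient $\sqrt{\eta_\e+v_\e^2}\,\nabla u_\e$ reduces on $\partial\Omega$ to $\sqrt{1+\eta_\e}\,\nabla u_\e$, so a bound on the boundary normal derivative of $u_\e$ should come from an interior-to-boundary estimate for the divergence-form equation $\dive((\eta_\e+v_\e^2)\nabla u_\e)=0$. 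Concretely, I would test this equation against $(G-u_\e)$ (which vanishes on $\partial\Omega$ since $u_\e=g$ there), or against a suitable vector field built from $\nu$ via the flow-map/Rellich–Pohozaev identity, to produce a boundary integral of $|\partial_\nu u_\e|^2$ controlled by the interior Dirichlet energy $\int_\Omega(\eta_\e+v_\e^2)|\nabla u_\e|^2\,dx$ and lower-order terms involving $g$ and $\partial\Omega$, all of which are uniformly bounded by \eqref{energy bound}, the $L^\infty$-bounds of Lemma \ref{cv faible vgradu}, and the $\mathcal C^{2,\alpha}$ regularity of $g$ and $\partial\Omega$. Since $\C^{2,1}$ regularity of $\partial\Omega$ guarantees a well-defined tubular neighborhood and a $\mathcal C^{1,1}$ extension of $\nu$, this Rellich-type identity is available.

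Once $\{\partial_\nu u_\e\}_{\e>0}$ is bounded in $L^2(\partial\Omega)$, up to a subsequence it converges weakly in $L^2(\partial\Omega)$ to some limit; to identify this limit with $\nabla u\cdot\nu$ I would pass to the limit in the weak formulation of the second equation of \eqref{criticalpointAT} tested against smooth $\psi$ not vanishing on $\partial\Omega$: integration by parts yields $\int_\Omega(\eta_\e+v_\e^2)\nabla u_\e\cdot\nabla\psi\,dx=\int_{\partial\Omega}(\eta_\e+v_\e^2)(\partial_\nu u_\e)\psi\,d\mathcal H^1$, and since $(\eta_\e+v_\e^2)\nabla u_\e\wto\nabla u$ in $L^2(\Omega;\R^2)$ (Lemma \ref{cv faible vgradu}) while $\dive(\nabla u)=0$ in $\mathcal D'(\Omega)$, the left side converges to $\int_{\partial\Omega}(\nabla u\cdot\nu)\psi\,d\mathcal H^1$ (using that $\nabla u$ has an $L^2$ normal trace as a divergence-free $L^2$ field on a smooth domain, cf.\ Anzellotti-type trace theory), while the right side converges to $\int_{\partial\Omega}(\text{weak-}L^2\text{-limit of }\partial_\nu u_\e)\,\psi\,d\mathcal H^1$; arbitrariness of $\psi$ forces the two limits to coincide, giving $\partial_\nu u_\e\wto\nabla u\cdot\nu$ in $L^2(\partial\Omega)$.

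Finally, for the measure statement: the sequence $\big[|\partial_\nu u_\e|^2+\e|\partial_\nu v_\e|^2\big]\mathcal H^1\res\partial\Omega$ consists of non-negative Radon measures on the compact set $\partial\Omega$, and I would show their total masses are uniformly bounded. The $|\partial_\nu u_\e|^2$-part has bounded mass by the $L^2(\partial\Omega)$-bound just proved; for the $\e|\partial_\nu v_\e|^2$-part I would again use a Rellich-Pohozaev identity, now for the first equation $-\e\Delta v_\e+\frac{v_\e-1}{4\e}+v_\e|\nabla u_\e|^2=0$ with $v_\e=1$ on $\partial\Omega$, testing against $X\cdot\nabla v_\e$ with $X$ transverse to $\partial\Omega$: this produces $\e\int_{\partial\Omega}|\partial_\nu v_\e|^2\,d\mathcal H^1$ controlled by $\e\int_\Omega|\nabla v_\e|^2\,dx$, $\int_\Omega\frac{(1-v_\e)^2}{\e}\,dx$, and $\int_\Omega v_\e^2|\nabla u_\e|^2\,dx$ plus curvature terms, all bounded by \eqref{energy bound}. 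By weak$^\star$ compactness of bounded sequences in $\mathcal M(\partial\Omega)$, a subsequence converges weakly$^\star$ to a non-negative $m\in\mathcal M(\partial\Omega)$, which is the claim.

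The main obstacle I expect is the Rellich-Pohozaev boundary estimate for $\e|\partial_\nu v_\e|^2$: one must choose the multiplier vector field carefully so that the boundary term has a favorable sign after using $v_\e\equiv1$ on $\partial\Omega$ (hence $\nabla v_\e=(\partial_\nu v_\e)\nu$ there, so tangential derivatives of $v_\e$ vanish on $\partial\Omega$), and one must absorb the curvature contributions of $\partial\Omega$ and the coupling term $\int_\Omega v_\e^2|\nabla u_\e|^2\,dx$ into the uniformly-bounded quantities; the $\C^{2,1}$ hypothesis on $\partial\Omega$ and the $L^\infty$-bound on $v_\e$ are exactly what is needed to make these terms controllable.
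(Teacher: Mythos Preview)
Your approach is correct and is the one taken in \cite[Lemma 4.1]{BMRa23}, which the present paper cites without reproducing a proof. The only point to sharpen concerns the coupling term you flag as an obstacle: in the Rellich--Pohozaev identity for the $v_\e$-equation, the coupling contribution is $\int_\Omega v_\e|\nabla u_\e|^2(X\cdot\nabla v_\e)\,dx$, which is \emph{not} directly controlled by the energy (it involves $\nabla v_\e$, not $v_\e$). The resolution is that this term cancels exactly against the term $\int_\Omega v_\e(\nabla v_\e\cdot X)|\nabla u_\e|^2\,dx$ that appears when you differentiate the coefficient $(\eta_\e+v_\e^2)$ in the Rellich identity for the $u_\e$-equation. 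Equivalently---and this is how \cite{BMRa23} proceeds---you can bypass the separate identities altogether and use the combined inner variation identity (quoted in this paper as \eqref{variations internes AT 2}, from \cite[Proposition 4.2]{BMRa23}), which is nothing but the Pohozaev identity for the full stress-energy tensor $T_\e$. Choosing $X\in\C^1_c(\R^2;\R^2)$ with $X\cdot\nu\geq c>0$ on $\partial\Omega$, the left-hand side is bounded by $C\,AT_\e(u_\e,v_\e)$, and on the right-hand side the cross term $\partial_\nu u_\e(X\cdot\tau)\partial_\tau g$ is absorbed via Young's inequality, yielding the uniform bound on $\int_{\partial\Omega}\big[|\partial_\nu u_\e|^2+\e|\partial_\nu v_\e|^2\big]\,d\HH^1$ in one stroke. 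Your identification of the weak limit $\partial_\nu u_\e\rightharpoonup\nabla u\cdot\nu$ via the normal trace of the divergence-free field $\nabla u$ is exactly right.
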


The results concerning the measure $\mu$ and the passage to the limit in the inner variations of $AT_\varepsilon$ find a better formulation in terms of varifolds. We now recall the expression of the varifold associated to the phase-field function $v_\e$, and some of the convergence results established in \cite{BMRa23}. 

\begin{definition}\label{definition varifold}
Let $\{(u_\varepsilon , v_\varepsilon)\}_{\e>0}$ be a sequence satisfying \(\sup_{\e} AT_\e(u_\e,v_\e) <+\infty\) and let $w_\e$ be defined by \eqref{def we}. We introduce the varifold $V_\varepsilon\in \mathcal M(\overline{\Omega}\times \mathbf G_1)$  by 

\begin{equation}\label{def varifold}
\langle V_\varepsilon , \varphi \rangle = \int_{\Omega \cap \{\nabla w_\varepsilon \neq 0\}} \varphi \left(x , {\rm Id} - \frac{\nabla w_\varepsilon}{|\nabla w_\varepsilon|} \otimes \frac{\nabla w_\varepsilon}{|\nabla w_\varepsilon|} \right) |\nabla w_\varepsilon| \, dx \;\;\;\;\; \text{for all} \; \varphi \in \mathcal{C}(\overline{\Omega}\times \mathbf G_1).
\end{equation}
\end{definition}

Definition \ref{def varifold} implies that the weight measure of $V_\e$ is given by $\|V_\varepsilon\| = |\nabla w_\varepsilon | \mathcal{L}^2\res \Omega$. In \cite[Section 5]{BMRa23} the following convergence result of Proposition \ref{desintegration} is established. It is an immediate consequence of the energy bound \eqref{energy bound}, the weak$^\star$ compactness of bounded sequences of Radon measures and the Disintegration Theorem (see e.g.\ \cite[Theorem 2.28]{AFP00})

\begin{proposition}\label{desintegration}
Let $\{(u_\varepsilon , v_\varepsilon)\}_{\e>0}$ be a sequence satisfying \(\sup_{\e} AT_\e(u_\e,v_\e) <+\infty\), and let $V_\e$ be the varifold defined in \eqref{def varifold}.  Up to a subsequence (not relabeled), there exists a varifold $V \in \mathcal{M}(\overline{\Omega}\times \mathbf G_1)$ such that
\begin{equation}
\label{cv varifold}
V_\varepsilon \overset{\star}{\rightharpoonup} V \;\;\; \text{weakly}^\star \text{ in }\, \mathcal{M}(\overline{\Omega}\times \mathbf G_1).
\end{equation}
Moreover, there exists a weak$^\star$ $(\mathcal{H}^1 \res \widehat{J_u})$-measurable mapping $x \mapsto V_x \in \mathcal M(\mathbf G_1)$ of probability measures such that $V = (\mathcal{H}^1 \res \widehat{J_u}) \otimes V_x$, i.e.
$$\int_{\overline \Omega \times \mathbf G_1} \varphi(x,A)\, dV(x,A)=\int_{\widehat{J_u}} \left(\int_{\mathbf G_1} \varphi(x,A)\, dV_x(A)\right)d\mathcal H^1(x) \quad \text{ for all } \varphi \in \mathcal{C}(\overline{\Omega}\times \mathbf G_1).$$
\end{proposition}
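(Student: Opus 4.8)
The plan is to deduce the statement from the three ingredients announced before it: a uniform mass bound for $V_\e$, the sequential weak$^\star$ compactness of bounded Radon measures on the compact space $\overline{\Omega}\times \mathbf G_1$, and the Disintegration Theorem. I would organize these in order, using Proposition \ref{equipartition energy} to pin down the base measure.

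First I would bound the total masses $\|V_\e\|(\overline\Omega)=V_\e(\overline\Omega\times\mathbf G_1)$ uniformly in $\e$. Since $\Phi(t)=t-t^2/2$ satisfies $\Phi'(t)=1-t$, the chain rule gives $\nabla w_\e=(1-v_\e)\nabla v_\e$, and the maximum principle $0\le v_\e\le 1$ (as in Lemma \ref{cv faible vgradu}) yields $|\nabla w_\e|=(1-v_\e)|\nabla v_\e|$. Young's inequality then produces the pointwise estimate
$$|\nabla w_\e|=(1-v_\e)|\nabla v_\e|\le \frac{(1-v_\e)^2}{4\e}+\e|\nabla v_\e|^2,$$
whence $\|V_\e\|(\overline\Omega)=\int_\Omega |\nabla w_\e|\,dx\le AT_\e(u_\e,v_\e)\le C$ by the energy bound \eqref{energy bound}. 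Because $\overline\Omega\times\mathbf G_1$ is compact and $\C(\overline\Omega\times\mathbf G_1)$ is separable, the sequence of non-negative measures $\{V_\e\}_\e$ is bounded in the dual $\mathcal M(\overline\Omega\times\mathbf G_1)$ and therefore admits, up to a subsequence, a weak$^\star$ limit $V\in\mathcal M(\overline\Omega\times\mathbf G_1)$; this is exactly \eqref{cv varifold}.

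Next I would identify the mass measure $\|V\|$. Let $\pi:\overline\Omega\times\mathbf G_1\to\overline\Omega$ be the continuous projection onto the first factor. Definition \eqref{def varifold} shows that $\pi_\#V_\e=\|V_\e\|=|\nabla w_\e|\mathcal L^2\res\Omega$, since testing against $\psi\circ\pi$ for $\psi\in\C(\overline\Omega)$ recovers $\int_\Omega\psi|\nabla w_\e|\,dx$. Continuity of $\pi$ transfers the weak$^\star$ convergence to the pushforwards, so $\pi_\#V_\e\overset{\star}{\rightharpoonup}\pi_\#V$; on the other hand Proposition \ref{equipartition energy} gives $|\nabla w_\e|\mathcal L^2\res\Omega\overset{\star}{\rightharpoonup}\mathcal H^1\res\widehat{J_u}$. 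By uniqueness of weak$^\star$ limits, together with the identity $\|V\|=\pi_\#V$ valid for any varifold (indeed $\|V\|(B)=V(B\times\mathbf G_1)=\pi_\#V(B)$), I conclude $\|V\|=\mathcal H^1\res\widehat{J_u}$.

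Finally I would invoke the Disintegration Theorem \cite[Theorem 2.28]{AFP00} with $X=\overline\Omega$, $Y=\mathbf G_1$ and base measure $\pi_\#V=\mathcal H^1\res\widehat{J_u}$. This furnishes an $(\mathcal H^1\res\widehat{J_u})$-a.e.\ uniquely determined, weak$^\star$ measurable family $\{V_x\}_x$ of measures on $\mathbf G_1$, with $V_x(\mathbf G_1)=1$ for $(\mathcal H^1\res\widehat{J_u})$-a.e.\ $x$ (the normalization being precisely the content of the theorem when the base measure equals the full projection of the non-negative $V$), such that $V=(\mathcal H^1\res\widehat{J_u})\otimes V_x$, which is the claimed identity. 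There is no genuine obstacle in this argument: the only two points requiring justification are the uniform mass bound, which is handled by Young's inequality and the energy bound, and the identification $\pi_\#V=\mathcal H^1\res\widehat{J_u}$, which follows from continuity of $\pi$ and Proposition \ref{equipartition energy}; the disintegration itself is a black-box application.
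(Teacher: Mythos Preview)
Your argument is correct and follows exactly the three ingredients the paper invokes (energy bound, weak$^\star$ compactness, Disintegration Theorem), supplying the details the paper omits. One small caveat: to identify $\|V\|=\mathcal H^1\res\widehat{J_u}$ you rely on Proposition~\ref{equipartition energy}, whose hypotheses (critical points and the phase-field energy convergence \eqref{cv energy}) are strictly stronger than the bare energy bound stated in Proposition~\ref{desintegration}; this is a looseness in the paper's statement rather than in your proof, and in the context where the result is used those hypotheses are in force.
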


Using the disintegrated structure of $V$, we have an alternative expression of the first variation of the varifold \(V\).

\begin{proposition}\label{premier moment}
For $\mathcal{H}^1$-a.e $x \in \widehat{J_u}$, we define the first moment of $V_x$ as 
\begin{equation}\label{first moment}
\overline A(x) := \int_{\mathbf G_{1}} A \, dV_x(A). 
\end{equation}
Then, the matrix $\overline A(x) \in \mathbb M^{2 \times 2}_{\rm sym}$ is an orthogonal projector for $\mathcal{H}^1$-a.e $x$ in $\widehat{J_u}$ and\footnote{the first variation of a varifold is usually defined on an open set. We extend this definition to treat the boundary here.}
\begin{equation}\label{eq:first_var_barycentre}
\delta V (X) = \int_{\widehat{J_u}} \overline A(x) : DX(x) \, d\mathcal{H}^1(x) \quad \text{ for all } X\in \C^1 (\overline{\Omega};\R^2).
\end{equation}
Moreover, for $\mathcal{H}^1$-a.e $x\in \widehat{J_u}$, we have \(V_x=\delta_{\overline A(x)}\). 
\end{proposition}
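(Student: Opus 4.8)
The plan is to exploit the fact that, by Proposition~\ref{desintegration}, the limit varifold $V$ disintegrates as $V = (\mathcal{H}^1 \res \widehat{J_u}) \otimes V_x$ with each $V_x$ a probability measure on $\mathbf{G}_1$, while simultaneously the weight $\|V_\e\| = |\nabla w_\e|\,\mathcal{L}^2\res\Omega$ converges to $\mathcal{H}^1\res\widehat{J_u}$ by Proposition~\ref{equipartition energy}. The first step is to establish \eqref{eq:first_var_barycentre}. Starting from the definition of $\delta V_\e$ applied to the varifold \eqref{def varifold}, one has $\delta V_\e(X) = \int_{\Omega\cap\{\nabla w_\e\neq 0\}} \left(\mathrm{Id} - \tfrac{\nabla w_\e}{|\nabla w_\e|}\otimes\tfrac{\nabla w_\e}{|\nabla w_\e|}\right):DX\,|\nabla w_\e|\,dx$; one checks (this is the content of the varifold associated to a phase field, as in \cite{T02,BMRa23}) that this coincides with a genuine first-variation-type quantity that is uniformly bounded under the energy bound, and passes to the limit using \eqref{cv varifold} to get $\delta V_\e(X)\to \delta V(X) = \int_{\overline\Omega\times\mathbf{G}_1} DX:A\,dV(x,A)$. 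Inserting the disintegration and the definition \eqref{first moment} of $\overline A(x)$, and using Fubini, yields exactly \eqref{eq:first_var_barycentre} for $X\in\mathcal{C}^1(\overline\Omega;\R^2)$ (the footnote's boundary convention is what lets us test against such $X$ rather than only compactly supported ones).

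The second step — identifying $\overline A(x)$ as an orthogonal projector for $\mathcal{H}^1$-a.e.\ $x$ — is where the rectifiable structure of $\widehat{J_u}$ enters. Since $u\in SBV^2(\Omega)$, the set $\widehat{J_u}=J_{\hat u}$ is countably $\mathcal{H}^1$-rectifiable, so at $\mathcal{H}^1$-a.e.\ $x_0$ it has an approximate tangent line spanned by some $\tau(x_0)\in\mathbf{S}^1$, equivalently the tangent projection $\mathrm{Id}-\nu_u(x_0)\otimes\nu_u(x_0)$. I would run a blow-up argument: rescale around such an $x_0$ and use the weak$^\star$ convergence together with the fact that $\|V\| = \mathcal{H}^1\res\widehat{J_u}$ has $\mathcal{H}^1$-density equal to $1$ at $x_0$ to show the rescaled varifolds converge to a (locally) stationary varifold supported on the tangent line, carrying the probability measure $V_{x_0}$ on fibers. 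The monotonicity formula \eqref{eq:monotonicity} for the limiting stationary varifold forces it to be a plane with multiplicity, hence (by the structure of stationary $1$-varifolds with density $1$, or more simply by testing \eqref{eq:first_var_barycentre} against affine $X$ localized near $x_0$ and comparing with the known $\mathcal{H}^1$-density) $\overline A(x_0)$ must equal the tangent projection $\mathrm{Id}-\nu_u(x_0)\otimes\nu_u(x_0)$, which is indeed an orthogonal projector onto a $1$-dimensional subspace.

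The third step — upgrading to $V_x = \delta_{\overline A(x)}$, i.e.\ the fiber measure is a Dirac mass — follows because $\mathbf{G}_1$ here is identified with matrices $\mathrm{Id}-e\otimes e$, $e\in\mathbf{S}^1$, and for such a one-dimensional family of projectors the map $e\mapsto \mathrm{Id}-e\otimes e$ is affine-like enough that the barycenter $\overline A(x)=\int_{\mathbf{G}_1} A\,dV_x(A)$ being itself a rank-one-complement projector of the same type is only possible if $V_x$ is concentrated at a single point. Concretely, writing $A = \mathrm{Id}-e\otimes e$, one has $\overline A(x) = \mathrm{Id} - \int e\otimes e\,dV_x$, and $\int e\otimes e\,dV_x$ is a symmetric positive semidefinite matrix with trace $1$; it is a rank-one projector $\nu_u\otimes\nu_u$ if and only if its largest eigenvalue is $1$, which by the spectral characterization (equality in a convexity/Jensen estimate, since $e\mapsto (\xi\cdot e)^2$ for the top eigenvector $\xi=\nu_u$ satisfies $\int (\nu_u\cdot e)^2\,dV_x = 1 = \int |e|^2\,dV_x$) forces $(\nu_u\cdot e)^2 = 1$ for $V_x$-a.e.\ $e$, i.e.\ $e=\pm\nu_u$, hence $A = \mathrm{Id}-\nu_u\otimes\nu_u$ $V_x$-a.e.\ and $V_x=\delta_{\overline A(x)}$.

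The main obstacle I expect is the second step: making the blow-up argument rigorous requires controlling the first variation $\delta V_\e$ not just in the limit but along the rescaling, which typically means invoking the quantitative almost-monotonicity estimates for the phase-field varifolds (the diffuse analogue of \eqref{eq:monotonicity}) that are available from the energy bound and the discrepancy vanishing $\xi_\e\to 0$ in $L^1$ — this is exactly where the convergence \eqref{cv energy} and equi-partition of Proposition~\ref{equipartition energy} are doing the heavy lifting, since without them one cannot pin down the limiting varifold's weight to be precisely $\mathcal{H}^1\res\widehat{J_u}$ with density one. Once the stationary tangent varifold has density one a.e., the rest is the standard rectifiability/constancy argument.
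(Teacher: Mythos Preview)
Your first step (the formula \eqref{eq:first_var_barycentre}) and third step (the Dirac conclusion) are correct and essentially match the paper. For \eqref{eq:first_var_barycentre} the paper simply plugs $\varphi(x,A)=DX(x):A$ into the disintegration of Proposition~\ref{desintegration} and uses Fubini; the passage through $\delta V_\e$ is unnecessary. For the Dirac conclusion the paper gives the variance argument: once $\overline A(x)$ is known to be an orthogonal projector one has $|\overline A(x)|=1$, while $|A|=1$ for every $A\in\mathbf G_1$, so $\int_{\mathbf G_1}|\overline A(x)-A|^2\,dV_x(A)=0$ and $V_x=\delta_{\overline A(x)}$. Your Jensen-type argument with $e\otimes e$ is an equivalent reformulation.

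The gap is in your second step. The paper does \emph{not} run a blow-up or invoke monotonicity here; it cites \cite[Lemma~5.2]{BMRa23} for the two scalar facts $\Tr(\overline A(x))=1$ and $\rho(\overline A(x))=1$, and then observes that in dimension two these force the eigenvalues of the symmetric matrix $\overline A(x)$ to be exactly $0$ and $1$. Note also that the proposition asserts only that $\overline A(x)$ is \emph{some} orthogonal projector, not that it equals the tangent projector $\mathrm{Id}-\nu_u(x)\otimes\nu_u(x)$; you are attempting to prove more than what is stated, and that stronger statement is not established anywhere in the paper for $\overline A$. Your blow-up argument cannot deliver it because the varifold $V$ is \emph{not} stationary: the limiting conservation law (Lemma~\ref{variations internes limites}) gives $\delta V(X)=-\langle(\Tr\mu)\,\mathrm{Id}-2\mu,DX\rangle$ for interior $X$, and the elastic defect measure $\mu$ need not vanish on $\widehat{J_u}$. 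Blowing up at a generic $x_0\in J_u$ kills $\mu^a$ and $\mu^c$ but not $\mu^j$, so the tangent varifold carries a residual first variation. Done correctly, the blow-up identifies the tangent projector with $\Theta(x_0)=\overline A(x_0)+\tfrac{d[(\Tr\mu^j)\mathrm{Id}-2\mu^j]}{d(\mathcal H^1\res\widehat{J_u})}(x_0)$, which is precisely the content of Lemmas~\ref{Theta interieur} and~\ref{Theta bord} in Section~\ref{sec:proof of Th1}, carried out \emph{after} one already knows (via the present proposition and Lemma~\ref{dfr}) that both $\overline A$ and $\Theta$ are projectors. The obstacle you anticipate is misdiagnosed: the discrepancy vanishing $\xi_\e\to 0$ neutralises the phase-field error, but it does nothing about the coupling term $v_\e|\nabla u_\e|^2$ in the equation for $v_\e$, which is exactly what feeds $\mu$ into $\delta V$.
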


\begin{proof}
According to \cite[Lemma 5.2]{BMRa23}, for $\mathcal{H}^1$-a.e $x \in \widehat{J_u}$, the matrix $\overline A(x) \in \mathbb M^{2 \times 2}_{\rm sym}$ satisfies $\Tr(\overline A(x))=1$ and $\rho(\overline A(x))=1$, where \(\rho\) denotes the spectral radius. In dimension two, those conditions imply that the eigenvalues of $\overline A(x)$ are exactly $0$ and $1$, which ensures that $\overline A(x)$ is an orthogonal projector.

Concerning the representation of the first variation of $V$, we consider a test function $X \in \mathcal{C}^1(\overline{\Omega};\R^2)$. According to Proposition \ref{desintegration} with $\varphi(x,A) = DX(x):A$ together with Fubini's Theorem, we infer that
\begin{align*}
\delta V(X)=\int_{\overline{\Omega} \times \mathbf G_{1}} DX(x):A \, dV(x,A)&=\displaystyle\int_{\widehat{J_u}} \left(\int_{\mathbf G_{1}} DX(x):A\,  dV_x(A) \right) d \mathcal{H}^1(x)\\
& =\displaystyle\int_{\widehat{J_u}} \left(\int_{\mathbf G_{1}} A \, dV_x(A) \right) :DX(x)\, d \mathcal{H}^1(x),
\end{align*}
which corresponds to \eqref{eq:first_var_barycentre} by definition \eqref{first moment} of $\overline A$.

Since any element $A \in \mathbf G_1$ satisfies $|A|=1$ and $V_x$ is a probability measure over $\mathbf G_1$ for $\HH^1$-a.e. $x \in \widehat{J_u}$, we infer that the variance of $V_x$ is zero, i.e.
\[\int_{\mathbf G_1} | \overline A(x) -  A|^2 \, dV_x(A) = 0,\]
and thus, $A=\overline A(x)$ for $V_x$-a.e $A\in \mathbf G_1$. As a consequence, $V_x$ is a probability measure concentrated at $\{\overline A(x)\}$, hence the Dirac measure at $\overline A(x)$. 
\end{proof}

In higher dimension, Proposition \ref{premier moment} remains true under the hypothesis \eqref{cv energy}. A direct consequence is that the probability measure $V_x$ is actually a Dirac mass concentrated at $\overline A(x)$.


\section{Analysis of the defect measure}\label{sec:conv_dirichlet}

The aim of this section is to prove the following result, which is independent of the phase-field energy convergence assumption.
 
\begin{proposition}\label{prop:sigma}
Let $\{(u_\e,v_\e)\}_{\e>0}$ be family of  critical points of the Ambrosio-Tortorelli functional such that 
$$\sup_{\e>0} AT_\e(u_\e,v_\e)<\infty.$$
Then, there exist a subsequence (not relabeled) and a non-negative measure $\sigma \in \mathcal M(\Omega)$ which is singular with respect to the Lebesgue measure, such that
\begin{equation}\label{cv dirichlet}
(\eta_\e + v_\varepsilon^2) |\nabla u_\varepsilon|^2\LL^2\res\Omega  \overset{\star}{\rightharpoonup} |\nabla u|^2\LL^2 \res \Omega+ \sigma\quad\text{weakly}^\star \text{ in }\mathcal M(\Omega).
\end{equation}
\end{proposition}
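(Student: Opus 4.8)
The plan is to establish \eqref{cv dirichlet} by separating the limiting measure $\mu$ of \eqref{def de mu} into its Lebesgue-absolutely continuous part and its singular part, and then identifying the trace (or rather the full density) of the absolutely continuous part as $|\nabla u|^2$. Since $\{(\eta_\e+v_\e^2)|\nabla u_\e|^2\}_{\e>0}$ is bounded in $L^1(\Omega)$ by the energy bound, up to a subsequence there exists a non-negative measure $\lambda \in \M(\Omega)$ with $(\eta_\e+v_\e^2)|\nabla u_\e|^2 \LL^2\res\Omega \overset{\star}{\rightharpoonup}\lambda$; note $\lambda = \tr\mu$ where $\mu$ is the matrix-valued limit. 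Write the Radon-Nikod\'ym decomposition $\lambda = f\LL^2\res\Omega + \sigma$ with $f \geq 0$ in $L^1(\Omega)$ and $\sigma \geq 0$ singular with respect to $\LL^2$. The heart of the matter is to prove $f = |\nabla u|^2$ $\LL^2$-a.e.\ in $\Omega$.

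First I would obtain the inequality $f \geq |\nabla u|^2$ for free: by lower semicontinuity of the $L^2$-norm under weak convergence, for any open ball $B \subset\subset \Omega$ and any $\varphi\in \C_c^\infty(B;\R^2)$, the weak convergence $\sqrt{\eta_\e+v_\e^2}\,\nabla u_\e \wto \nabla u$ in $L^2(\Omega;\R^2)$ from Lemma \ref{cv faible vgradu} gives $\int_B |\nabla u|^2\,dx \leq \liminf_\e \int_B (\eta_\e+v_\e^2)|\nabla u_\e|^2\,dx$, hence $\int_B|\nabla u|^2\,dx \leq \lambda(B)$ for all such $B$, so $|\nabla u|^2 \LL^2\res\Omega \leq \lambda$ as measures and thus $|\nabla u|^2 \leq f$ a.e. The reverse inequality $f \leq |\nabla u|^2$ a.e.\ is the real work and is carried out at $\LL^2$-a.e.\ point $x_0$, which we may take to be simultaneously a Lebesgue point of $f$ and of $|\nabla u|^2$, a point where $\sigma$ has zero density, and — crucially — a point of $L^2$-approximate differentiability of $u$ (this last notion is available in dimension $2$ for $SBV^2$ functions, and it is exactly here the restriction $N=2$ enters). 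At such a point, blow up: rescale $u_\e$ and $v_\e$ on balls $B_r(x_0)$, and show that along a suitable joint diagonal subsequence one reduces to the model situation where the limit function is the affine map $\ell(x) = u(x_0) + \nabla u(x_0)\cdot(x-x_0)$. The plan is then to exploit the separate convexity of $AT_\e$ in the phase-field variable: because $(u_\e,v_\e)$ is a critical point, $v_\e$ is in fact the \emph{minimizer} of $v\mapsto AT_\e(u_\e,v)$ with its own boundary data, and comparing with a well-chosen competitor profile (the standard one-dimensional optimal-profile $v$ for a "crack" through the blow-up, or the constant competitor when no crack is seen) forces, in the rescaled picture, strong $L^2$-convergence of $\sqrt{\eta_\e+v_\e^2}\,\nabla u_\e$ to $\nabla \ell = \nabla u(x_0)$ on the blow-up ball. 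Strong convergence upgrades the weak lower-semicontinuity inequality to an equality of energies, yielding $f(x_0) = |\nabla u(x_0)|^2$.

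Concretely the key steps, in order, are: (i) extract $\lambda$ and write $\lambda = f\LL^2 + \sigma$; (ii) prove $f \geq |\nabla u|^2$ by lower semicontinuity; (iii) fix a good point $x_0$ (Lebesgue point of $f$ and $|\nabla u|^2$, zero $\sigma$-density, $L^2$-approximate differentiability of $u$), perform the parabolic/isotropic rescaling and pass to a blow-up limit in which $u$ is replaced by an affine map; (iv) use criticality $\Rightarrow$ minimality of $v_\e$ in the phase variable together with a comparison argument to improve weak to strong $L^2$-convergence of the rescaled $\sqrt{\eta_\e+v_\e^2}\,\nabla u_\e$; (v) deduce $f(x_0)=|\nabla u(x_0)|^2$ and conclude $f = |\nabla u|^2$ a.e., hence \eqref{cv dirichlet} with $\sigma$ the singular remainder. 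The main obstacle is step (iv): making the blow-up compatible for \emph{both} $u_\e$ and $v_\e$ simultaneously (choosing the radii $r=r_\e$ so that neither variable degenerates), and running the convexity/comparison argument carefully enough to extract genuine strong $L^2$-convergence rather than mere weak convergence — in particular handling the defect that $\sigma$ may actually be nonzero globally, so one cannot hope for strong convergence on all of $\Omega$, only locally around $\LL^2$-a.e.\ point where the blow-up of $\sigma$ vanishes.
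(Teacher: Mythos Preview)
Your overall architecture is exactly that of the paper: extract the weak-$\star$ limit $\lambda$, get $f\geq |\nabla u|^2$ by lower semicontinuity, blow up at a good point (Lebesgue point of $f$, of $|\nabla u|^2$, zero $\sigma$-density, and $L^2$-approximate differentiability of $u$---this is indeed where $N=2$ enters), pass to a diagonal subsequence so that the rescaled pair $(\hat u_j,\hat v_j)$ converges to the affine function $\hat u(y)=\nabla u(x_0)\cdot y$ and $1$, and then upgrade weak to strong $L^2_{\rm loc}$ convergence of $\sqrt{\eta_{\e_j}+\hat v_j^2}\,\nabla \hat u_j$ to read off $f(x_0)=|\nabla u(x_0)|^2$.

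The gap is in your step (iv): you propose to use minimality of $v_\e$ in the \emph{phase-field variable}. It is not clear how comparing $\hat v_j$ with a competitor profile for $v$ would bound $\int(\eta_{\e_j}+\hat v_j^2)|\nabla\hat u_j|^2$ from above by $|\nabla u(x_0)|^2\int\varphi+o(1)$, which is what is needed. Varying $v$ gives you relations between the phase-field term and the elastic term $\int v_\e|\nabla u_\e|^2$, but the upper bound you need is in terms of $|\nabla u(x_0)|^2$, and the competitor $v\equiv 1$ (the natural one at a non-jump point) produces $\int|\nabla\hat u_j|^2$ on the right-hand side, a quantity you do not control.

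The paper instead uses minimality in the \emph{$u$-variable}. The rescaled second Euler--Lagrange equation $\dive((\eta_{\e_j}+\hat v_j^2)\nabla\hat u_j)=0$ in $B_1$ is equivalent (by convexity in $u$) to $\hat u_j$ minimizing $z\mapsto \int_{B_1}(\eta_{\e_j}+\hat v_j^2)|\nabla z|^2$ among $z\in \hat u_j+H^1_0(B_1)$. Take a cut-off $\varphi\in \C^\infty_c(B_1)$, $0\leq\varphi\leq 1$, and the competitor $z=\varphi\hat u+(1-\varphi)\hat u_j$. Expanding and using $\hat u_j\to\hat u$ in $L^2(B_1)$, the boundedness of $\{(\eta_{\e_j}+\hat v_j^2)^{1/2}\nabla\hat u_j\}$ in $L^2$, and $\nabla\hat u=\nabla u(x_0)$, one obtains
\[
\limsup_{j\to\infty}\int_{B_1}(\eta_{\e_j}+\hat v_j^2)|\nabla\hat u_j|^2\varphi\,dy \leq |\nabla u(x_0)|^2\int_{B_1}\varphi\,dy,
\]
which combined with the weak lower bound gives strong $L^2_{\rm loc}(B_1)$ convergence and hence $f(x_0)=|\nabla u(x_0)|^2$. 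So your plan becomes correct once you swap the roles of $u$ and $v$ in step (iv) and use this cut-off competitor for $u$.
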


\begin{proof}
Since the families $\{(\eta_\e + v_\varepsilon^2) |\nabla u_\varepsilon|^2\}_{\e>0}$ and $\{\varepsilon |\nabla v_\varepsilon|^2 + (1-v_\varepsilon)^2/4\varepsilon\}_{\varepsilon >0}$ are bounded in $L^1(\Omega)$ by the energy bound \eqref{energy bound}, up to a subsequence (not relabelled), there exist nonnegative measures $\lambda_1$ and $\lambda_2\in \mathcal M(\Omega)$ such that 
$$(\eta_\e + v_\varepsilon^2) |\nabla u_\varepsilon|^2\LL^2  \res\Omega \overset{\star}{\rightharpoonup} \lambda_1\quad \text{ weakly$^\star$ in } \mathcal M(\Omega)$$
and
$$\left(\e|\nabla v_\e|^2 + \frac{(v_\e-1)^2}{4\e}\right)\LL^2\res\Omega \overset{\star}{\rightharpoonup} \lambda_2\quad \text{ weakly$^\star$ in } \mathcal M(\Omega).$$
On the other hand, since from Lemma \ref{cv faible vgradu} we know that $\sqrt{\eta_\e + v_\varepsilon^2}\nabla u_\varepsilon \rightharpoonup \nabla u$ weakly in $L^2(\Omega;\R^2)$, it results from the weak lower semi-continuity of the norm that $\lambda_1 \geqslant |\nabla u|^2\LL^2\res \Omega$. Thus the measure $\sigma=\lambda_1- |\nabla u|^2\mathcal L^2\res \Omega$ is nonnegative, and it remains to show that it is singular with respect to $\LL^2$. To this aim, by Lebesgue's differentiation Theorem, it is enough to prove that
\begin{equation}\label{eq:sigma-singular}
\frac{d\lambda_1}{d\LL^2}=|\nabla u|^2 \quad \LL^2\text{-a.e. in }\Omega.
\end{equation}

We write \( \lambda_1=\frac{d\lambda_1}{d\LL^2} \mathcal{L}^2 +\lambda_1^s\) with \(\lambda_1^s\) which is singular with respect to the Lebesgue measure. Let $x_0 \in \Omega$ be 
\begin{itemize}
\item[(i)] such that $\frac{d\lambda_1}{d\LL^2}(x_0)$ and $\frac{d\lambda_2}{d\LL^2}(x_0)$ exist and are finite;
\item[(ii)] a Lebesgue point of $u$, $\nabla u$ and $\frac{d\lambda_1}{d\LL^2}$;
\item[(iii)] such that 
\begin{equation}\label{condition iii}
\lim\limits_{\rho\rightarrow 0} \frac{1}{\pi \rho^2} \int_{B_\rho(x_0)} \frac{|u(y) -u(x_0)- \nabla u(x_0)\cdot (y-x_0)|^2}{\rho^2}\, dy = 0;
\end{equation}
\item[(iv)] \label{condition iv} such that $\frac{d\lambda_1^s}{d\LL^2}(x_0)=0$.
\end{itemize}
Note that $\LL^2$ almost every points $x_0$ in $\Omega$ fullfil these properties by the Lebesgue differentiation Theorem, the \(L^2\)-differentiability a.e.\ of \(BV\) functions cf.\ \cite[Theorem 1, Section 6.1]{EG92} and the fact that $\lambda_1^s$ is singular with respect to $\LL^2$. We point out that the \(L^2\)-differentiability a.e.\ of \(BV\) functions holds only in dimension 2 which is one of the reasons of our dimensional restriction.

\medskip

Let $R>0$ be such that $B_R(x_0) \subset\subset \Omega$, and $\{\varrho_j\}_{j \in \mathbb N} \searrow 0^+$ be an infinitesimal sequence of radii such that $\rho_j < R$ for all $j \in \mathbb N$ and
\begin{equation}\label{eq:negligible_circles}
\lambda_1(\partial B_{\rho_j}(x_0))+\lambda_2(\partial B_{\rho_j}(x_0))=0.
\end{equation}

\noindent For all $y$ in $B_1$, we define the rescaled functions.
$$\hat v_{\e,j}(y)=v_\e(x_0+\rho_j y), \quad \hat u_{\e,j}(y)=\frac{u_\e(x_0+\rho_j y)-u(x_0)}{\rho_j}.$$
Then, since $(u_\e,v_\e) \to (u,1)$ in $[L^2(\Omega)]^2$,
$$\lim_{j \to \infty} \lim_{\e \to 0} \int_{B_1}|\hat v_{\e,j}-1|^2\, dy=\lim_{j \to \infty} \lim_{\e \to 0}\frac{1}{\rho_j^2}\int_{B_{\rho_j}(x_0)}|v_\e-1|^2\, dx=0,$$
while
\begin{multline*}
\lim_{j \to \infty} \lim_{\e \to 0}\int_{B_1}|\hat u_{\e,j}(y)-\nabla u(x_0)\cdot y|^2\, dy\\
=\lim_{j \to \infty} \int_{B_1}\left|\frac{u(x_0+\rho_j y)- u(x_0)}{\rho_j}-\nabla u(x_0)\cdot y\right|^2dy\\
= \lim_{j \to \infty}\frac{1}{\rho_j^2} \int_{B_{\rho_j}(x_0)}\frac{|u(x)- u(x_0)-\nabla u(x_0)\cdot (x-x_0)|^2}{\rho_j^2}\, dx=0,
\end{multline*}
where we used \eqref{condition iii}. Moreover, for all $\zeta \in \mathcal C_c(B_1)$, 
\begin{multline*}
\lim_{j \to \infty} \lim_{\e \to 0}\int_{B_1} ( \eta_\e+\hat v_{\e,j}^2)|\nabla \hat u_{\e,j}|^2\zeta\, dy\\
=\lim_{j \to \infty} \lim_{\e \to 0}\frac{1}{\rho_j^2} \int_{B_{\rho_j}(x_0)} (\eta_\varepsilon + v_\e(x)^2)|\nabla u_\e(x)|^2 \zeta\left(\frac{x-x_0}{\rho_j}\right)dx\\
=\lim_{j \to \infty}\frac{1}{\rho_j^2} \int_{B_{\rho_j}(x_0)}  \zeta\left(\frac{x-x_0}{\rho_j}\right)d\lambda_1(x).
\end{multline*}
Condition (iv) implies that 
\[\frac{1}{\rho_j^2} \left|\int_{B_{\rho_j}(x_0)}  \zeta\left(\frac{x-x_0}{\rho_j}\right) d \lambda_1^s \right| \leqslant \|\zeta\|_{L^\infty(B_1)} \frac{\lambda_1^s(B_{\rho_j}(x_0))}{\rho_j^2} \underset{j\rightarrow +\infty}{\longrightarrow } 0.\]
Hence, using item (ii),
\begin{align*}
\lim_{j \to \infty}\frac{1}{\rho_j^2} \int_{B_{\rho_j}(x_0)}  \zeta\left(\frac{x-x_0}{\rho_j}\right)d\lambda_1(x)& =\lim_{j \to \infty}\frac{1}{\rho_j^2} \int_{B_{\rho_j}(x_0)}  \zeta\left(\frac{x-x_0}{\rho_j}\right) \frac{d\lambda_1}{d\LL^2}(x)\, dx\\
&= \frac{d\lambda_1}{d\LL^2}(x_0)\int_{B_1}\zeta\, dy.
\end{align*}
We next observe that, thanks to \eqref{eq:negligible_circles}, (i) and (iv),
$$\lim_{j \to \infty} \lim_{\e \to 0} \frac{1}{\pi \rho_j^2} \int_{B_{\rho_j}(x_0)} (\eta_\varepsilon + v_\e^2)|\nabla u_\e|^2\, dx =\lim_{j \to \infty}  \frac{\lambda_1(B_{\rho_j}(x_0))}{\pi \rho_j^2} =\frac{d\lambda_1}{d\LL^2}(x_0)$$
and
$$\lim_{j \to \infty} \lim_{\e \to 0} \frac{1}{\pi \rho_j^2} \int_{B_{\rho_j}(x_0)}\left(\e|\nabla v_\e|^2 + \frac{(v_\e-1)^2}{4\e}\right)dx
=\lim_{j \to \infty}  \frac{\lambda_2(B_{\rho_j}(x_0))}{\pi \rho_j^2}=\frac{d\lambda_2}{d\LL^2}(x_0),$$
while, changing variables
$$\frac{1}{\rho_j^2} \int_{B_{\rho_j}(x_0)} (\eta_\varepsilon + v_\e^2)|\nabla u_\e|^2\, dx =\int_{B_1} (\eta_\e+\hat v_{\e,j}^2)|\nabla \hat u_{\e,j}|^2\, dy,$$
and
$$\frac{1}{\rho_j^2} \int_{B_{\rho_j}(x_0)} \left( \e|\nabla v_\e|^2 + \frac{(v_\e-1)^2}{4\e}\right)dx
=\frac{1}{\rho_j} \int_{B_1} \left((\e/\rho_j) |\nabla \hat v_{\e,j}|^2 + \frac{(\hat v_{\e,j}-1)^2}{4(\e/\rho_j)}\right)dy.$$

Using a diagonal extraction argument together with the separability of $\mathcal \C_c(B_1)$, we can find an infinitesimal sequence $\e_j \to 0$ such that, setting $\hat \e_j:=\e_j/\rho_j$, $\hat v_j:=\hat v_{\e_j,j}$, $\hat u_j:=\hat u_{\e_j,j}$ and $\hat u(y)=\nabla u(x_0)\cdot y$, then 
\begin{equation}\label{eq:convergences}
\begin{cases}
\hat \e_j \to 0,\\
(\hat u_j,\hat v_j) \to (\hat u,1) \text{ in }[L^2(B_1)]^2,\\
(\eta_{\e_j}+\hat v_{j}^2)|\nabla \hat u_{j}|^2\LL^2 \res\Omega \rightharpoonup   \frac{d\lambda_1}{d\LL^2}(x_0)\LL^2\res\Omega \quad \text{ weakly}^\star \text{ in }\mathcal M(B_1),\end{cases}
\end{equation}
and
$$\int_{B_1} (\eta_{\e_j}+\hat v_{j}^2)|\nabla \hat u_{j}|^2\, dy\to \pi \frac{d\lambda_1}{d\LL^2}(x_0), \quad \frac{1}{\rho_j} \int_{B_1} \left(\hat \e_j |\nabla \hat v_{j}|^2 + \frac{(\hat v_{j}-1)^2}{4\hat \e_j}\right)dy \to \pi \frac{d\lambda_2}{d\LL^2}(x_0).$$
By the classical compactness argument of the Ambrosio-Tortorelli functional (see Lemma \ref{cv faible vgradu}), the previous convergences imply that  $(\eta_{\e_j}+\hat v_j^2)^{1/2} \nabla \hat u_j \rightharpoonup \nabla \hat u=\nabla u(x_0)$ weakly in $L^2(B_1;\R^2)$.
Now using that $(u_\e,v_\e)$ is a critical point of the Ambrosio-Tortorelli functional, we infer that ${\rm div}((\eta_{\e_j}+\hat v_j^2) \nabla \hat u_j)=0$ in $B_1$. Note that this partial differential equation is equivalent to the minimality property
$$\int_{B_1} (\eta_{\e_j}+\hat v_j^2) |\nabla \hat u_j|^2 \, dy \leq \int_{B_1} (\eta_{\e_j}+\hat v_j^2 )|\nabla z|^2 \, dy \quad \text{ for all }z \in \hat u_j + H^1_0(B_1).$$
Let $\varphi \in \mathcal C_c^\infty(B_1)$ be a cut-off function with $0 \leq \varphi \leq 1$ in $\Omega$. Since $\hat u \in H^1(B_1)$, we are allowed to take $z=\varphi \hat u + (1-\varphi)\hat u_j$ as competitor which leads to
$$\int_{B_1} (\eta_{\e_j}+\hat v_j^2) |\nabla \hat u_j|^2 \, dy \leq \int_{B_1} (\eta_{\e_j}+\hat v_j^2)  |\varphi \nabla \hat u+(1-\varphi)\nabla \hat u_j+ \nabla \varphi(\hat u_j-\hat u)|^2 \, dy.$$
Expanding the square and using that $0 \leq \hat v_j \leq 1$, and that  \(0\leq \varphi^2\leq \varphi\), we find that 
\begin{eqnarray*}
\int_{B_1} (\eta_{\e_j}+\hat v_j^2) |\nabla \hat u_j|^2 \, dy & \leq & \int_{B_1} \left( \varphi(\eta_{\e_j}+1)| \nabla \hat u|^2+(1-\varphi)(\eta_{\e_j}+\hat v_j^2) |\nabla \hat u_j|^2\right) dy\\
&&+ \int_{B_1} |\nabla \varphi|^2 (\hat u_j-\hat u)^2 \, dy\\
&&+ 2\int_{B_1}(\eta_{\e_j}+\hat v_j^2)(\hat u_j-\hat u) ( \varphi \nabla \hat u + (1-\varphi) \nabla \hat u_j  )\cdot \nabla \varphi\, dy.
\end{eqnarray*}
Reorganizing the terms in the right-hand side and recalling that $\nabla \hat u=\nabla u(x_0)$, we obtain that
\begin{eqnarray*}
\int_{B_1} (\eta_{\e_j}+\hat v_j^2) |\nabla \hat u_j|^2\varphi \, dy & \leq & (\eta_{\e_j}+1)|\nabla u(x_0)|^2 \int_{B_1}\varphi\, dy+ \int_{B_1} |\nabla \varphi|^2 (\hat u_j-\hat u)^2 \, dy\\
&&+ 2\int_{B_1}(\eta_{\e_j}+\hat v_j^2)(\hat u_j-\hat u) ( \varphi \nabla \hat u + (1-\varphi) \nabla \hat u_j  )\cdot \nabla \varphi\, dy.
\end{eqnarray*}
Since $\hat v_j \to 1$ in $L^2(B_1)$, $\hat u_j \to \hat u$ in $L^2(B_1)$ and $\{(\eta_{\e_j}+\hat v_j^2) \nabla \hat u_j\}_{j \in \mathbb N}$ is bounded in $L^2(B_1;\R^2)$, we can pass to the limit and get that 
$$\limsup_{j \to \infty}\int_{B_1} (\eta_{\e_j}+\hat v_j^2) |\nabla \hat u_j|^2\varphi \, dy \leq |\nabla u(x_0)|^2 \int_{B_1} \varphi\, dy.$$
Combining this inequality with the weak $L^2(B_1;\R^2)$ convergence of  $\{(\eta_{\e_j}+\hat v_j^2)^{1/2} \nabla \hat u_j\}_{j \in \mathbb N}$ to $\nabla \hat u=\nabla u(x_0)$, we infer that
$(\eta_{\e_j}+\hat v_j^2)^{1/2} \nabla \hat u_j\to \nabla u(x_0)$ strongly in $L^2_{\rm loc}(B_1)$. Recalling the last convergence in \eqref{eq:convergences}, we deduce that $\frac{d\lambda_1}{d\LL^2}(x_0)=|\nabla u(x_0)|^2$, hence we obtain \eqref{eq:sigma-singular}.
\end{proof}

\begin{remark}{\rm
If $\nabla u \in L^\infty(\Omega;\R^2)$, we have an alternative proof of the previous result based on Anzellotti's duality pairing (see \cite{A83}). Indeed, recalling that ${\rm div}(\nabla u)=0$ in $\mathcal D'(\Omega)$, we can define as in  \cite[Definition 1.4]{A83} the distribution $[\nabla u \cdot Du]\in \mathcal D'(\Omega)$ by
\begin{equation}\label{eq:Anzellotti}
\langle [\nabla u \cdot Du],\varphi\rangle:=-\int_\Omega u \nabla u \cdot \nabla \varphi \,dx \quad \text{ for all }\varphi \in \mathcal C^\infty_c(\Omega).
\end{equation}
By \cite[Theorem 1.5 and Theorem 2.4]{A83}, $ [\nabla u \cdot Du]$ extends to a bounded Radon measure in $\Omega$ whose absolutely continuous part is given by
\begin{equation}\label{eq:abs-cont}
[\nabla u \cdot Du]^a=|\nabla u|^2\LL^2\res \Omega.
\end{equation}

We now make the connexion between the measures $\lambda_1$ and $[\nabla u \cdot Du]$. Indeed, let $\varphi \in \mathcal C^\infty_c(\Omega)$, taking $u_\e \varphi$ as test function in the first equation of \eqref{criticalpointAT} yields
$$\int_\Omega (\eta_\e +v_\e^2)|\nabla u_\e|^2 \varphi\, dx = - \int_\Omega u_\e (\eta_\e+v_\e^2)\nabla u_\e \cdot \nabla \varphi\, dx.$$
Since $\{u_\e\}_{\e>0}$ and $\{v_\e\}_{\e>0}$ are bounded in $L^\infty(\Omega)$ and $(u_\e,v_\e) \to (u,1)$ in strongly in $[L^2(\Omega)]^2$, we have $u_\e \to u$ and $(\eta_\e+v_\e^2)^{1/2} \to 1$ strongly in $L^4(\Omega)$. Recalling that $(\eta_\e+v_\e^2)^{1/2}\nabla u_\e \wto \nabla u$ weakly in $L^2(\Omega;\R^2)$ and $(\eta_\e +v_\e^2)|\nabla u_\e|^2\LL^2\res \Omega \wto \lambda_1$ weakly$^\star$ in $\mathcal M(\Omega)$, we obtain
\begin{equation}\label{crochet Anzellotti}
\int_\Omega \varphi \, d\lambda_1= -\int_\Omega u \nabla u \cdot \nabla \varphi\, dx.
\end{equation}
In orther words, $\lambda_1=[\nabla u \cdot Du]$, and thus \eqref{eq:abs-cont} implies that $\lambda_1 -|\nabla u|^2\LL^2\res \Omega$ is a singular measure with respect to $\LL^2$.
}
\end{remark}

\begin{remark}\label{rk_2_Anzellotti}\rm Anzellotti's pairing \eqref{eq:Anzellotti} is still well-defined if we assume only \(\nabla u \in L^2(\Omega;\R^2)\) and \({\rm div}(\nabla u)=0\). In this remark we show that this pairing can be used to give a weak formulation to the criticality condition \(\p_{\nu_u} u=0\) on \(J_u\), valid for critical points of the MS energy when \(J_u\) is smooth enough. We recall that this criticality condition comes from \eqref{eq:3rd_eq_MS}. Now, formally we have that \([\nabla u\cdot Du]=\nabla u\cdot Du=|\nabla u|^2\mathcal{L}^2\res \Omega+(u^+-u^-) \nabla u \cdot \nu_u \mathcal{H}^1\res J_u\). Hence a possible weak formulation of \eqref{eq:3rd_eq_MS} could be 
\[ 
\begin{cases}
\dive (\nabla u)=0 \text{ in }\mathcal D'(\Omega)\\
[\nabla u\cdot Du]=|\nabla u|^2\mathcal{L}^2\res \Omega.
\end{cases}\]
Notice that, in light of \eqref{crochet Anzellotti}, the second condition is fulfilled when $(\eta_\e+v_\e^2)^{1/2}\nabla u_\e \to \nabla u$ strongly in $L^2(\Omega)$  which is the case in the context of \cite{BMRa23}.
\end{remark}

Proposition \ref{prop:sigma} allows us to compute the density of $\mu$, the weak\(^\star\) limit of \( (\eta_\e+v_\e^2) \nabla u_\e\otimes \nabla u_\e \mathcal{L}^2\res\Omega\), with respect to $\mathcal{L}^2$.

\begin{proposition}\label{cv xor}
There exists a measure $\mu^{s} \in \mathcal M(\Omega;\mathbb M^{2 \times 2}_{\rm sym})$ which is singular with respect to the Lebesgue measure such that

\begin{equation}\label{eq:mu}
(\eta_\e  + v_\varepsilon^2) \nabla u_\varepsilon \otimes \nabla u_\varepsilon \LL^2\res \Omega \overset{\star}{\rightharpoonup} \mu= \nabla u \otimes \nabla u \LL^2\res\Omega  + \mu^{s}\quad\text{weakly}^\star \text{ in }\mathcal M(\Omega;\mathbb M^{2 \times 2}_{\rm sym}).
\end{equation}
\end{proposition}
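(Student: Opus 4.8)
The plan is to derive Proposition \ref{cv xor} from Proposition \ref{prop:sigma} by a short manipulation of weak$^\star$ limits, without running a second blow-up. I would write $g_\e:=\sqrt{\eta_\e+v_\e^2}\,\nabla u_\e$ and $h_\e:=g_\e-\nabla u$, so that Lemma \ref{cv faible vgradu} gives $h_\e\wto 0$ weakly in $L^2(\Omega;\R^2)$, while the energy bound \eqref{energy bound} guarantees that $\{g_\e\otimes g_\e\}_{\e>0}$ and $\{h_\e\otimes h_\e\}_{\e>0}$ are bounded in $L^1(\Omega;\mathbb M^{2\times 2}_{\rm sym})$. Then I would expand
\[ g_\e\otimes g_\e=h_\e\otimes h_\e+h_\e\otimes\nabla u+\nabla u\otimes h_\e+\nabla u\otimes\nabla u \]
and pass to the limit term by term in the sense of measures.

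First I would discard the two cross terms: for $\zeta\in\mathcal C_c(\Omega)$ and indices $i,j$, $\int_\Omega \zeta\,(h_\e)_i(\nabla u)_j\,dx=\int_\Omega (h_\e)_i\,\big(\zeta\,(\nabla u)_j\big)\,dx\to 0$, since $\zeta\,(\nabla u)_j\in L^2(\Omega)$ and $(h_\e)_i\wto 0$ in $L^2(\Omega)$; together with the uniform bound $\|h_\e\otimes\nabla u\|_{L^1(\Omega)}\le\|h_\e\|_{L^2(\Omega)}\|\nabla u\|_{L^2(\Omega)}$, this gives $h_\e\otimes\nabla u\,\LL^2\res\Omega\wsto 0$ and likewise $\nabla u\otimes h_\e\,\LL^2\res\Omega\wsto 0$. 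Extracting a further (not relabeled) subsequence, $h_\e\otimes h_\e\,\LL^2\res\Omega\wsto\nu$ for some $\nu\in\mathcal M(\Omega;\mathbb M^{2\times 2}_{\rm sym})$ taking values in the cone of symmetric positive semidefinite matrices (it is the weak$^\star$ limit of rank-one positive semidefinite densities). Comparing with \eqref{def de mu} then yields $\mu=\nabla u\otimes\nabla u\,\LL^2\res\Omega+\nu$, so it only remains to show that $\nu$ is singular with respect to $\LL^2$, after which one sets $\mu^s:=\nu$.

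The only step with any content is the singularity of $\nu$, and this is precisely where Proposition \ref{prop:sigma} enters. I would identify $\tr\nu$ as the weak$^\star$ limit of $|h_\e|^2\,\LL^2\res\Omega$; expanding $|g_\e|^2=|h_\e|^2+2\,h_\e\cdot\nabla u+|\nabla u|^2$ and noting that $h_\e\cdot\nabla u\,\LL^2\res\Omega\wsto 0$ (the same argument as for the cross terms, now with $\zeta\,\nabla u\in L^2(\Omega;\R^2)$), Proposition \ref{prop:sigma} --- which asserts $(\eta_\e+v_\e^2)|\nabla u_\e|^2\,\LL^2\res\Omega\wsto|\nabla u|^2\,\LL^2\res\Omega+\sigma$ with $\sigma$ singular --- forces $\tr\nu=\sigma$. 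Since $\nu(B)$ is positive semidefinite for every Borel $B$, its Frobenius norm is dominated by its trace, so $|\nu|\le\tr\nu=\sigma$ as measures; hence $|\nu|$, and therefore $\nu$, is singular with respect to $\LL^2$, which finishes the proof.

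I do not expect a genuine obstacle here: the delicate analysis --- the blow-up around $L^2$-differentiability points of $u$ and the use of the convexity of $AT_\e$ in $v$ to upgrade weak to strong $L^2_{\rm loc}$ convergence --- has already been spent in Proposition \ref{prop:sigma}, and what is left is the bookkeeping of weak$^\star$ limits plus the elementary inequality $|A|\le\tr A$ for $A$ positive semidefinite. An alternative, slightly longer route would repeat the blow-up of Proposition \ref{prop:sigma} at a Besicovitch differentiability point $x_0$ of $\mu$ and exploit the strong convergence $(\eta_{\e_j}+\hat v_j^2)^{1/2}\nabla\hat u_j\to\nabla u(x_0)$ established there to deduce $(\eta_{\e_j}+\hat v_j^2)\nabla\hat u_j\otimes\nabla\hat u_j\to\nabla u(x_0)\otimes\nabla u(x_0)$ in $L^1_{\rm loc}(B_1)$, hence $\tfrac{d\mu}{d\LL^2}(x_0)=\nabla u(x_0)\otimes\nabla u(x_0)$; but the direct manipulation above seems preferable.
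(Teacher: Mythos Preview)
Your proof is correct and follows essentially the same route as the paper: both identify the defect as the weak$^\star$ limit of $h_\e\otimes h_\e$ (with $h_\e=\sqrt{\eta_\e+v_\e^2}\,\nabla u_\e-\nabla u$), compute its trace as $\sigma$ via Proposition~\ref{prop:sigma}, and deduce singularity from positive semidefiniteness. The paper carries this out componentwise --- first the diagonal defects $\sigma_1,\sigma_2$ (nonnegative with $\sigma_1+\sigma_2=\sigma$), then the off-diagonal $\sigma_{12}$ via Young's inequality $2|(h_\e)_1(h_\e)_2|\le|h_\e|^2$ to get $|\sigma_{12}|\le\tfrac12\sigma$ --- whereas your matrix-level inequality $|\nu|\le\tr\nu$ for $\nu$ valued in the positive-semidefinite cone is a cleaner one-line substitute for the same step.
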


\begin{proof}
Let $\sigma$ be defined as in Proposition \ref{prop:sigma}. 
Since the family $\{(v_\varepsilon^2 + \eta_\varepsilon) \nabla u_\varepsilon \otimes \nabla u_\varepsilon\}_{\e>0}$ is bounded in $L^1(\Omega;\mathbb M^{2 \times 2}_{\rm sym})$ thanks to the energy bound \eqref{energy bound}, we infer that, up to a subsequence, $(\eta_\varepsilon + v_\varepsilon^2) \nabla u_\varepsilon \otimes \nabla u_\varepsilon \LL^2\res\Omega \overset{\star}{\rightharpoonup}\mu$ weakly$^\star$ in $\mathcal M(\Omega;\mathbb M^{2 \times 2}_{\rm sym})$ for some $\mu \in \mathcal M(\Omega;\mathbb M^{2 \times 2}_{\rm sym})$. Writing the Lebesgue decomposition of $\mu$ as $\mu=\mu^a + \mu^{s}$, we claim that $\mu^a=\nabla u \otimes \nabla u \LL^2\res\Omega$.

For $i=1$, $2$, we denote by $\sigma_i\in \mathcal M(\Omega)$ the nonnegative measure such that 
\[(v_\varepsilon^2 + \eta_\varepsilon) |\partial_i u_\varepsilon|^2\LL^2\res\Omega \overset{\star}{\rightharpoonup} |\partial_i u|^2\LL^2\res\Omega + \sigma_i \text{ weakly}^\star \text{ in } \mathcal M(\Omega).\] 
Let $\sigma$ be the singular measure provided by Proposition \ref{prop:sigma}. As $\sigma= \sigma_1+ \sigma_2$ with $\sigma_1$ and $\sigma_2$ non negative, we infer that $\sigma_1$ and $\sigma_2$ are absolutely continuous with respect to $\sigma$. Since $\sigma$ is singular with respect to  $\mathcal{L}^2$, we infer that both $\sigma_1$ and  $\sigma_2$ are singular with respect to $\mathcal{L}^2$ as well. 
In addition, since $(\eta_\varepsilon + v_\varepsilon^2)|\partial_i u_\varepsilon |^2 \LL^2\res\Omega  \overset{\star}{\rightharpoonup}  \mu_{ii}^a + \mu^{s}_{ii}$, we obtain, by uniqueness of the Lebesgue decomposition, that $\mu^{s}_{ii} = \sigma_i$ and $\mu^a_{ii} = |\partial_i u |^2\LL^2\res\Omega$. 

\medskip

Let us write $(v_\varepsilon^2+ \eta_\varepsilon)\partial_1 u_\varepsilon \, \partial_2 u_\varepsilon\LL^2\res\Omega \overset{\star}{\rightharpoonup} \partial_1 u \, \partial_2 u\LL^2\res\Omega + \sigma_{12}$ weakly$^\star$ in $\mathcal M(\Omega)$ for some $\sigma_{12} \in \mathcal M(\Omega)$. For every test function $\varphi\in \mathcal C_c(\Omega)$, Proposition \ref{prop:sigma} and Lemma \ref{cv faible vgradu} yield
\begin{align*}
\int_\Omega \left|\sqrt{v_\varepsilon^2+ \eta_\varepsilon} \nabla u_\varepsilon - \nabla u \right|^2 \varphi \, dx &= \int_\Omega (v_\varepsilon^2+ \eta_\varepsilon) |\nabla u_\varepsilon |^2 \varphi\, dx+ \int_\Omega |\nabla u|^2 \varphi \, dx\\
& \quad \quad  - 2 \int_\Omega \sqrt{v_\varepsilon^2+ \eta_\varepsilon} \nabla u_\varepsilon \cdot \nabla u \varphi\, dx \\
&\underset{\varepsilon\rightarrow 0}{\longrightarrow}  \int_\Omega |\nabla u|^2 \varphi\, dx + \int_\Omega \varphi \, d\sigma + \int_\Omega |\nabla u|^2\varphi\, dx - 2 \int_\Omega |\nabla u|^2 \varphi\, dx\\
&= \int_\Omega \varphi\, d\sigma.
\end{align*}
Thus, we have $|\sqrt{v_\varepsilon^2 +\eta_\varepsilon} \nabla u_\varepsilon - \nabla u |^2\LL^2\res  \overset{\star}{\rightharpoonup}  \sigma$ weakly$^\star$ in $\mathcal M(\Omega)$. Hence
\begin{multline}\label{double-produit-mesure}
2\left|(\sqrt{v_\varepsilon^2 +\eta_\varepsilon} \partial_1 u_\varepsilon - \partial_1 u) (\sqrt{v_\varepsilon^2 +\eta_\varepsilon}\partial_2 u_\varepsilon - \partial_2 u)\right|   \\
\leqslant \left(\sqrt{v_\varepsilon^2 +\eta_\varepsilon} \partial_1 u_\varepsilon - \partial_1 u\right)^2 + \left(\sqrt{v_\varepsilon^2 +\eta_\varepsilon} \partial_2 u_\varepsilon - \partial_2 u\right)^2\\
= \left|\sqrt{v_\varepsilon^2 +\eta_\varepsilon}  \nabla u_\varepsilon - \nabla u\right|^2 \overset{\star}{\rightharpoonup} \sigma. 
\end{multline}
Next, expanding the left-hand side of \eqref{double-produit-mesure} and using Lemma \ref{cv faible vgradu}, we obtain for all $\varphi\in \mathcal C_c(\Omega)$,

\begin{multline}\label{eq:weak_con_crossed}
\displaystyle\int_\Omega \left(\sqrt{v_\varepsilon^2 +\eta_\varepsilon}\partial_1 u_\varepsilon - \partial_1 u\right) \left(\sqrt{v_\varepsilon^2 +\eta_\varepsilon} \partial_2 u_\varepsilon - \partial_2 u\right)\: \varphi \, dx \\
=\displaystyle\int_\Omega (v_\varepsilon^2+\eta_\varepsilon )\partial_1 u_\varepsilon\: \partial_2 u_\varepsilon \: \varphi \, dx - \int_\Omega \sqrt{v_\varepsilon^2 +\eta_\varepsilon}\:\partial_1 u_\varepsilon \: \partial_2 u \: \varphi\, dx
- \displaystyle\int_\Omega \sqrt{v_\varepsilon^2 +\eta_\varepsilon}\: \partial_2 u_\varepsilon \: \partial_1 u \varphi\, dx\\
+ \int_\Omega \partial_1 u\: \partial_2 u \varphi \, dx\\
\displaystyle \underset{\varepsilon\rightarrow 0}{\longrightarrow} \displaystyle \int_\Omega \partial_1 u \: \partial_2 u\: \varphi +\int_\Omega \varphi \: d\sigma_{12} - \int_\Omega\partial_1 u \: \partial_2 u\: \varphi\, dx =\int_\Omega \varphi \, d\sigma_{12}. 
\end{multline}
By using \eqref{double-produit-mesure} and \eqref{eq:weak_con_crossed} along with a measure theoreric argument (see e.g.\ in \cite[Lemma 5.1]{Schochet_1995}), we conclude that
$| \sigma_{12} |  \leqslant \frac{1}{2} \sigma$, 
from which we deduce that $\sigma_{12}$ is absolutely continuous with respect to $\sigma$.
Hence, $\sigma_1, \sigma_2$ and $\sigma_{12}$ are singular with respect to $\LL^2$ thanks to Proposition \ref{prop:sigma}.
We have thus established that
$$\mu=\mu^a +\mu^{s}=\nabla u \otimes \nabla u\LL^2\res\Omega+
\begin{pmatrix}
\sigma_1 & \sigma_{12}\\
\sigma_{12} & \sigma_2
\end{pmatrix},
$$
with $\sigma_1$, $\sigma_2$ and $\sigma_{12}$ singular with respect to $\LL^2$.
By uniqueness of the Lebesgue decomposition, we infer that 
$$\mu^a=\nabla u \otimes \nabla u\LL^2\res\Omega, \quad \mu^{s}=\begin{pmatrix}
\sigma_1 & \sigma_{12}\\
\sigma_{12} & \sigma_2
\end{pmatrix}$$
which completes the proof of the proposition.
\end{proof}

\begin{remark}
{\rm
Propositions \ref{prop:sigma} and \ref{cv xor} show that one should not expect the strong $L^2$-convergence of $\{\sqrt{\eta_\e+v_\e^2}\nabla u_\e\}_{\e>0}$ to $\nabla u$. This is an obstacle to be able to pass to the limit in the second inner variation as done in \cite[Theorem 1.3]{BMRa23}.
}
\end{remark}

\section{Proof of Theorem \ref{Theorem 1}}\label{sec:proof of Th1}

In this subsection, we prove Theorem \ref{Theorem 1} by passing to the limit in \eqref{variations internes AT}.
Recall the definition of $\mu$ \eqref{eq:mu}. Observe that $\mu_{12}=\mu_{21}$ and that, passing to the trace, $(\eta_\e+v_\e^2)|\nabla u_\varepsilon |^2 \LL^2 \res\Omega \rightharpoonup \mu_{11} +\mu_{22}$. 
As in \cite[Lemma 5.1]{BMRa23}, we have the following limit conservation law involving $\mu$, the first moment $\overline A$ of $V$, and  the boundary measure $m \in \mathcal{M}(\p \Omega)$ defined in Lemma \ref{boundary term BMR}.
\begin{lemma}\label{variations internes limites}
For all vector field $X\in \mathcal{C}_c^1(\mathbb{R}^2; \mathbb{R}^2)$, we have  
\begin{multline}\label{measure limite + varifold}
\langle (\mu_{11} + \mu_{22}){\rm Id} - 2\mu ,  DX\rangle + \int_{\widehat{J_u}} \overline A : DX \, d\mathcal{H}^1\\
=-\int_{\partial \Omega} (X\cdot \nu) \, dm + \int_{\partial \Omega} |\partial_\tau g|^2 (X\cdot \nu)\, d\mathcal{H}^1 - 2\int_{\partial \Omega} (\nabla u \cdot \nu)(X\cdot \tau) \partial_\tau g \, d\mathcal{H}^1,
\end{multline}
where $\tau$ is a unit tangent vector to $\partial\Omega$ and $\partial_\tau g=\nabla g \cdot \tau$ is the tangential derivative of $g$ on $\partial\Omega$. 
\end{lemma}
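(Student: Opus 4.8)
The plan is to pass to the limit, for every $X\in\C_c^1(\R^2;\R^2)$, in the divergence-form identity
\[\int_\Omega T_\e:DX\,dx=\int_{\partial\Omega}(T_\e\nu)\cdot X\,d\HH^1,\]
where $T_\e$ is the stress-energy tensor introduced before the statement. This identity holds because $\dive T_\e=0$ in $\Omega$: choosing in \eqref{variations internes AT} test fields $X\in\C_c^1(\Omega;\R^2)$ makes the boundary term drop, so $\int_\Omega T_\e:DX\,dx=0$ for all such $X$, and since $u_\e,v_\e$ are smooth in $\Omega$ this forces $\dive T_\e=0$ pointwise there; the displayed identity is then just the divergence theorem on the $\C^{2,1}$ domain $\Omega$. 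Working with this identity rather than directly with \eqref{variations internes AT} is precisely what lets us reach arbitrary $X$, and not only fields tangent to $\partial\Omega$: the two extra boundary integrals in \eqref{measure limite + varifold}, which vanish when $X\cdot\nu=0$ on $\partial\Omega$, will come out of the normal component of $T_\e\nu$.

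For the interior integral, the elastic part of $T_\e$, namely $(\eta_\e+v_\e^2)(2\nabla u_\e\otimes\nabla u_\e-|\nabla u_\e|^2{\rm Id})$, converges weakly$^\star$ to $2\mu-(\mu_{11}+\mu_{22}){\rm Id}$ thanks to \eqref{eq:mu} and the trace convergence $(\eta_\e+v_\e^2)|\nabla u_\e|^2\LL^2\res\Omega\wsto\mu_{11}+\mu_{22}$ recorded above. The phase-field part of $T_\e$ is the delicate one, and it is here that the hypotheses of Theorem \ref{Theorem 1} enter through the varifold $V_\e$. With $w_\e=\Phi(v_\e)$ one has $\nabla w_\e=(1-v_\e)\nabla v_\e$, so that $\nu_\e:=\nabla w_\e/|\nabla w_\e|=\nabla v_\e/|\nabla v_\e|$ on $\{\nabla w_\e\neq0\}$; Young's inequality gives $\e|\nabla v_\e|^2+(1-v_\e)^2/(4\e)\ge|\nabla w_\e|$ pointwise, while Propositions \ref{equipartition energy} and \ref{cvmesureénergie}, tested against the constant $1$, give $\int_\Omega\bigl(\e|\nabla v_\e|^2+(1-v_\e)^2/(4\e)-|\nabla w_\e|\bigr)dx\to0$. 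Combined with $\xi_\e\to0$ in $L^1(\Omega)$, this yields $2\e\nabla v_\e\otimes\nabla v_\e=|\nabla w_\e|\,\nu_\e\otimes\nu_\e+o(1)$ and $\e|\nabla v_\e|^2+(1-v_\e)^2/(4\e)=|\nabla w_\e|+o(1)$ in $L^1(\Omega)$, hence the phase-field part of $T_\e$ equals $-|\nabla w_\e|({\rm Id}-\nu_\e\otimes\nu_\e)+o(1)$ in $L^1(\Omega;\mathbb M^{2\times 2}_{\rm sym})$. Integrating its contraction with $DX$ therefore gives $-\delta V_\e(X)+o(1)$, which by Propositions \ref{desintegration} and \ref{premier moment} converges to $-\delta V(X)=-\int_{\widehat{J_u}}\overline A:DX\,d\HH^1$. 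Altogether $\int_\Omega T_\e:DX\,dx\to\langle2\mu-(\mu_{11}+\mu_{22}){\rm Id},DX\rangle-\int_{\widehat{J_u}}\overline A:DX\,d\HH^1$.

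For the boundary integral, on $\partial\Omega$ one has $v_\e\equiv1$ (so $\partial_\tau v_\e=0$ and the potential term vanishes) and $u_\e=g$ (so $\partial_\tau u_\e=\partial_\tau g$); decomposing in the moving frame $(\tau,\nu)$, a direct computation gives
\[(T_\e\nu)\cdot X=2(\eta_\e+1)(\partial_\nu u_\e)(\partial_\tau g)(X\cdot\tau)+\Bigl[(\eta_\e+1)\bigl((\partial_\nu u_\e)^2-(\partial_\tau g)^2\bigr)+\e(\partial_\nu v_\e)^2\Bigr](X\cdot\nu).\]
Using $\partial_\nu u_\e\wto\nabla u\cdot\nu$ in $L^2(\partial\Omega)$ (Lemma \ref{boundary term BMR}), $\eta_\e\to0$, and the boundedness of $\{\partial_\nu u_\e\}$ in $L^2(\partial\Omega)$, the shear term tends to $2\int_{\partial\Omega}(\nabla u\cdot\nu)(\partial_\tau g)(X\cdot\tau)\,d\HH^1$ and the $\eta_\e$-corrections are negligible; for the normal term one writes $(\eta_\e+1)((\partial_\nu u_\e)^2-(\partial_\tau g)^2)+\e(\partial_\nu v_\e)^2=\bigl[(\partial_\nu u_\e)^2+\e(\partial_\nu v_\e)^2\bigr]-(\partial_\tau g)^2+o(1)$ and invokes the joint boundary measure $m$ of Lemma \ref{boundary term BMR}. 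This gives $\int_{\partial\Omega}(T_\e\nu)\cdot X\,d\HH^1\to\int_{\partial\Omega}(X\cdot\nu)\,dm-\int_{\partial\Omega}|\partial_\tau g|^2(X\cdot\nu)\,d\HH^1+2\int_{\partial\Omega}(\nabla u\cdot\nu)(\partial_\tau g)(X\cdot\tau)\,d\HH^1$, and equating this with the interior limit and rearranging produces exactly \eqref{measure limite + varifold}.

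The step I expect to be the main obstacle is the phase-field term: in the absence of convergence of the full energy one cannot pass to the limit in $2\e\nabla v_\e\otimes\nabla v_\e\,\LL^2\res\Omega$ towards an explicit limit by mere weak$^\star$ compactness. The resolution is that the equi-partition of the phase-field energy (Proposition \ref{equipartition energy}), together with the identification $\|V_\e\|=|\nabla w_\e|\LL^2\res\Omega\wsto\HH^1\res\widehat{J_u}$ (Proposition \ref{cvmesureénergie}), force the phase-field part of $T_\e$ to coincide, up to an $L^1$-vanishing error, with minus the first-variation density of $V_\e$, whose weak$^\star$ limit $V$ has the explicit disintegration $V=(\HH^1\res\widehat{J_u})\otimes\delta_{\overline A(x)}$ from Proposition \ref{premier moment}. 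A secondary technical point is on the boundary, where one must take the limit in the combination $(\partial_\nu u_\e)^2+\e(\partial_\nu v_\e)^2$, for which Lemma \ref{boundary term BMR} supplies a genuine weak$^\star$ limit $m$, rather than in $(\partial_\nu u_\e)^2$ alone, for which only lower semicontinuity is available. Apart from these points the argument follows \cite[Lemma 5.1]{BMRa23}, with $\mu$ (possibly carrying a nonzero singular part $\mu^s$) playing the role there played by $\nabla u\otimes\nabla u\,\LL^2\res\Omega$.
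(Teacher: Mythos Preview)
Your proof is correct and follows essentially the same route as the paper's. The only cosmetic difference is that you derive the starting identity $\int_\Omega T_\e:DX\,dx=\int_{\partial\Omega}(T_\e\nu)\cdot X\,d\HH^1$ yourself (via $\dive T_\e=0$ and the divergence theorem, then computing $T_\e\nu$ in the $(\tau,\nu)$ frame), whereas the paper quotes this identity directly from \cite[Proposition 4.2]{BMRa23}; the limiting arguments for the elastic, phase-field, and boundary terms are otherwise identical in content and in their use of Propositions \ref{equipartition energy}--\ref{desintegration} and Lemma \ref{boundary term BMR}.
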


\begin{proof}
Let $X\in \mathcal{C}_c^1(\mathbb{R}^2; \mathbb{R}^2)$ be an arbitrary vector field. By \cite[Proposition 4.2]{BMRa23},
\begin{multline}\label{variations internes AT 2}
\int_{\Omega} (2\sqrt{\eta_\varepsilon + v_\varepsilon^2} \nabla u_\varepsilon\otimes \sqrt{\eta_\varepsilon + v_\varepsilon^2} \nabla u_\varepsilon - (\eta_\varepsilon + v_\varepsilon^2)|\nabla u_\varepsilon|^2 {\rm Id} ) : DX \: dx \\
+ \int_\Omega \left[2 \e \nabla v_\varepsilon \otimes \nabla v_\varepsilon -\left(\frac{(1-v_\e)^2}{\varepsilon} + \varepsilon |\nabla v_\e|^2\right){\rm Id} \right] : DX \: dx \\
= \int_{\partial \Omega}\left[(\eta_\varepsilon + 1) |\partial_\nu u_\varepsilon|^2 + \varepsilon |\partial_\nu v_\varepsilon|^2 -(\eta_\varepsilon + 1) |\partial_\tau g|^2 \right] (X\cdot \nu ) d\mathcal{H}^1\\
+ 2(\eta_\varepsilon + 1) \int_{\partial \Omega} \partial_\nu u_\varepsilon (X\cdot \tau) (\partial_\tau g) \, d\mathcal{H}^1,
\end{multline}
We now study separately each term of this expression.

\medskip

Using the definition \eqref{def de mu} of the measure $\mu$, we have

\begin{multline}\label{Lemme I}
\int_{\Omega} (2\sqrt{\eta_\varepsilon + v_\varepsilon^2} \nabla u_\varepsilon\otimes \sqrt{\eta_\varepsilon + v_\varepsilon^2} \nabla u_\varepsilon - (\eta_\varepsilon + v_\varepsilon^2)|\nabla u_\varepsilon|^2 {\rm Id} ) : DX \: dx\\
\xrightarrow[\e \to 0]{}\langle 2\mu-(\mu_{11} + \mu_{22}){\rm Id}  ,  DX\rangle.
\end{multline}

\medskip

Next, thanks to the equi-partition of the energy \eqref{equipartition energy} and the varifold convergence \eqref{cv varifold}, we get 
\begin{equation}\label{Lemme II}
\begin{array}{lll}
&\displaystyle\lim\limits_{\varepsilon \rightarrow 0} \int_\Omega \left[2 \e\nabla v_\varepsilon \otimes \nabla v_\varepsilon -\left(\frac{(1-v_\e)^2}{\varepsilon} + \varepsilon |\nabla v_\e|^2\right)\right] : DX \: dx \\
&\\
=&\displaystyle\lim\limits_{\varepsilon \rightarrow 0} \int_{\{|\nabla v_\varepsilon | \neq 0\} \cap \Omega} 2\varepsilon|\nabla v_\varepsilon|^2 \left(\frac{\nabla v_\varepsilon}{|\nabla v_\varepsilon|} \otimes \frac{\nabla v_\varepsilon}{|\nabla v_\varepsilon|} - {\rm Id} \right) : DX \: dx \\
&\\
=&\displaystyle\lim\limits_{\varepsilon \rightarrow 0} \int_{\{|\nabla w_\varepsilon | \neq 0\} \cap \Omega} |\nabla w_\varepsilon | \left(\frac{\nabla w_\varepsilon}{|\nabla w_\varepsilon|} \otimes \frac{\nabla w_\varepsilon}{|\nabla w_\varepsilon|} - {\rm Id}\right) : DX \: dx\\
&\\
=& -\displaystyle\lim\limits_{\varepsilon \rightarrow 0} \int_{\overline \Omega \times \mathbf G_1} A : DX(x)\, dV_\e(x,A)\\
= & -\displaystyle\int_{\overline \Omega \times \mathbf G_1} A : DX(x)\, dV(x,A)=- \displaystyle\int_{\widehat{J_u}} \overline A : DX d\mathcal{H}^1.\\
\end{array}
\end{equation} 

\medskip

Finally, according to the regularity properties of $\partial \Omega$ and $g$, Lemma \ref{boundary term BMR} yields
\begin{multline}\label{lemme III}
\int_{\partial \Omega}\left[(\eta_\varepsilon + 1) |\partial_\nu u_\varepsilon|^2 + \varepsilon |\partial_\nu v_\varepsilon|^2 -(\eta_\varepsilon + 1) |\partial_\tau g|^2 \right] (X\cdot \nu ) \, d\mathcal{H}^1 \\
\xrightarrow[\e \to 0]{} \int_{\partial \Omega} (X\cdot \nu)\, dm -\int_{\partial \Omega} (X\cdot \nu) |\partial_\nu g|^2 \, d \mathcal{H}^1
\end{multline}
and
\begin{equation}\label{lemme IV}
2(\eta_\varepsilon + 1) \int_{\partial \Omega} (\partial_\nu u_\varepsilon) (X\cdot \tau) \partial_\tau g \, d\mathcal{H}^1 \xrightarrow[\e \to 0]{} 2 \int_{\partial\Omega} (\nabla u \cdot \nu) (X\cdot \tau) \partial_\tau g \, d\mathcal{H}^1. 
\end{equation}

Combining \eqref{Lemme I}, \eqref{Lemme II},\eqref{lemme III}, \eqref{lemme IV} yields \eqref{measure limite + varifold}.  
\end{proof}

Our strategy is then to analyse each term of the left-hand-side of \eqref{measure limite + varifold} to recover \eqref{inner variation of MS}. We now set
\begin{equation}\label{definition de T}
T:= (\mu_{11}+\mu_{22}) \, {\rm Id}  - 2\mu + \overline A \mathcal{H}^1 \res \widehat{J_u} \in \mathcal{M}(\overline{\Omega};\mathbb{M}^{2\times 2}).
\end{equation}
We observe that \eqref{measure limite + varifold} rewrites as
\begin{multline}\label{eq:TDX}
\langle T, DX\rangle =
-\int_{\partial \Omega} (X\cdot \nu) \, dm + \int_{\partial \Omega} |\partial_\tau g|^2 (X\cdot \nu)\, d\mathcal{H}^1\\
 - 2\int_{\partial \Omega} (\nabla u \cdot \nu)(X\cdot \tau) \partial_\tau g \, d\mathcal{H}^1 \quad \text{ for all }X \in \mathcal C^1_c(\R^2;\R^2),
\end{multline}
or still
\begin{equation}\label{eq:divT=mesure}
-{\rm div}(T)= -\nu m \res \partial \Omega + |\partial_\tau g|^2 \nu \HH^1 \res \partial \Omega - 2(\nabla u\cdot \nu)(\partial_\tau g)\tau \HH^1 \res \partial\Omega \quad \text{ in } \mathcal{D}'(\R^2;\R^2).
\end{equation}

Let us write the Lebesgue-Besicovitch decomposition for $\mu$ and $T$, 
\begin{equation}\label{decomposition}
\begin{cases}
\mu = \mu^{a} + \mu^{j} + \mu^{c},\\
T = T^{a} + T^{j} + T^{c},
\end{cases}
\end{equation}
where $\mu^a$ and $T^a$ are absolutely continuous with respect to $\LL^2$, $\mu^j$ and $T^j$ are absolutely continuous with respect to $\HH^1\res \widehat{J_u}$, and $\mu^c$ and $T^c$ are singular with respect to both $\LL^2$ and $\HH^1\res \widehat{J_u}$.
We observe that, according to Proposition \ref{cv xor}, 
\begin{equation}\label{mesure T}
\begin{cases}
T^{a} = (\mu_{11}^{a}+\mu_{22}^{a}){\rm Id}-2\mu^a = (|\nabla u|^2 {\rm Id} - 2\nabla u\otimes \nabla u)\LL^2 , \\
T^j = (\mu_{11}^j+\mu_{22}^j) {\rm Id} - 2\mu^j + \overline A \mathcal{H}^1 \res \widehat{J_u},\\
T^c =  (\mu_{11}^c+\mu_{22}^c) {\rm Id}- 2\mu^c.
\end{cases}
\end{equation}   
Let $\Theta$ be the density of $T^j$ with respect to $\mathcal{H}^1 \res \widehat{J_u}$, so that 
\begin{equation}\label{def Theta j}
T^j = \Theta \mathcal{H}^1 \res \widehat{J_u}=(\mu_{11}^j+\mu_{22}^j) {\rm Id} - 2\mu^j + \overline A \mathcal{H}^1 \res \widehat{J_u}.
\end{equation}

We now establish algebraic properties of the measures $T^j$ and $T^c$. The theory developed in \cite{DePhilippis_Rindler_2016} aims precisely at describing singular measures satisfying a linear PDE. It states that the polar of the singular part of the measure belongs to the wave-cone of the linear operator.

\begin{lemma}\label{dfr}
For $\HH^1$-a.e. $x \in \widehat{J_u}$, the matrix $\Theta(x) \in \mathbb M^{2 \times 2}_{\rm sym}$ is an orthogonal projector. Moreover, $T^c=0$ and there exists a scalar measure $\lambda^c \in \mathcal M(\overline\Omega)$ such that $\mu^c = \lambda^c\, {\rm Id}$.
\end{lemma}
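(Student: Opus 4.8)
The proof will be an application of the structure theorem of De Philippis and Rindler \cite{DePhilippis_Rindler_2016}, as announced just above. The relevant operator is $\mathcal{A}={\rm div}$ acting on $\mathbb{M}^{2\times 2}$-valued measures; its symbol is $\mathbb{A}(\xi)M=M\xi$, so that the associated wave cone is $\Lambda_{\mathcal{A}}=\{M\in\mathbb{M}^{2\times 2}:\ \det M=0\}$, the cone of (rank-deficient) singular matrices. The first point to settle is the applicability of \cite{DePhilippis_Rindler_2016}: viewing $T$ as extended by $0$ outside $\overline{\Omega}$, formula \eqref{eq:divT=mesure} shows that ${\rm div}\,T$ is a finite Radon measure on $\R^2$, since $m\in\M(\partial\Omega)$, $\partial_\tau g$ is bounded, $\nabla u\cdot\nu\in L^2(\partial\Omega)$ and $\HH^1(\partial\Omega)<\infty$. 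The theorem of \cite{DePhilippis_Rindler_2016} then yields that $\tfrac{dT}{d|T|}(x)\in\Lambda_{\mathcal{A}}$ for $|T^s|$-a.e.\ $x$, where $T^s=T^j+T^c$ is the singular part of $T$ with respect to $\LL^2$ (recall $T^j\ll\HH^1\res\widehat{J_u}$, which is mutually singular with $\LL^2$).

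I would treat the claim on $\Theta$ first. From \eqref{def Theta j} the traceless matrix $(\mu^j_{11}+\mu^j_{22}){\rm Id}-2\mu^j$ contributes nothing to the trace, so Proposition \ref{premier moment} gives ${\rm tr}\,\Theta(x)={\rm tr}\,\overline A(x)=1$ for $\HH^1$-a.e.\ $x\in\widehat{J_u}$. In particular $|\Theta(x)|^2\ge\tfrac12({\rm tr}\,\Theta(x))^2=\tfrac12$, so $\Theta\neq 0$ $\HH^1$-a.e.\ on $\widehat{J_u}$ and $|T^j|=|\Theta|\,\HH^1\res\widehat{J_u}$ is mutually absolutely continuous with $\HH^1\res\widehat{J_u}$. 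Hence the conclusion of \cite{DePhilippis_Rindler_2016}, which holds $|T^j|$-a.e., holds in fact $\HH^1$-a.e.\ on $\widehat{J_u}$, i.e.\ $\det\Theta(x)=0$ there. Since $\Theta(x)$ is symmetric (as $\mu$ and $\overline A$ are), with trace $1$ and determinant $0$, its eigenvalues are exactly $0$ and $1$, so $\Theta(x)$ is an orthogonal projector.

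For the part on $T^c$, I would use that $\mu$, being a weak-$\star$ limit of the positive semidefinite matrices $(\eta_\e+v_\e^2)\nabla u_\e\otimes\nabla u_\e$, is a positive semidefinite matrix measure, and hence so is its fully singular part $\mu^c$. Writing $\gamma:=\tfrac{d\mu^c}{d|\mu^c|}$, which is symmetric positive semidefinite with $|\gamma|=1$ for $|\mu^c|$-a.e.\ $x$ and eigenvalues $g_1,g_2\ge 0$ satisfying $g_1^2+g_2^2=1$, relation \eqref{mesure T} gives $\tfrac{dT^c}{d|\mu^c|}=({\rm tr}\,\gamma){\rm Id}-2\gamma$, whose eigenvalues are $\pm(g_1-g_2)$ and whose determinant equals $-(g_1-g_2)^2\le 0$. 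Thus this density is singular only where $g_1=g_2=\tfrac1{\sqrt2}$, i.e.\ $\gamma=\tfrac1{\sqrt2}{\rm Id}$, where it moreover vanishes; on the complementary set $\{\gamma\neq\tfrac1{\sqrt2}{\rm Id}\}$ it is invertible, so $|T^c|$ is concentrated there and $\tfrac{dT^c}{d|T^c|}(x)$ is invertible for $|T^c|$-a.e.\ $x$. Since this contradicts $\tfrac{dT^c}{d|T^c|}(x)\in\Lambda_{\mathcal{A}}$, we must have $|T^c|=0$, that is $T^c=0$. Plugging $T^c=0$ into \eqref{mesure T} gives $(\mu^c_{11}+\mu^c_{22}){\rm Id}=2\mu^c$, hence $\mu^c_{12}=0$ and $\mu^c_{11}=\mu^c_{22}=:\lambda^c$; this $\lambda^c\in\M(\overline\Omega)$ is nonnegative since $\mu^c$ is positive semidefinite, and $\mu^c=\lambda^c\,{\rm Id}$, as desired.

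Rather than the algebra, I expect the delicate points to be: (i) checking that ${\rm div}\,T$ is a genuine measure up to the boundary so that \cite{DePhilippis_Rindler_2016} applies; (ii) upgrading the "$|T^s|$-a.e." conclusion to an "$\HH^1$-a.e.\ on $\widehat{J_u}$" conclusion, which relies on the lower bound $|\Theta|\ge 1/\sqrt2$ coming from ${\rm tr}\,\Theta=1$; and (iii) the spectral fact that $({\rm tr}\,\gamma){\rm Id}-2\gamma$ is singular (for $\gamma$ positive semidefinite) only when $\gamma$ is a multiple of the identity, which is genuinely specific to dimension two and is the feature forcing $T^c=0$.
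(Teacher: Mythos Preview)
Your proof is correct and follows the same strategy as the paper: apply the De Philippis--Rindler structure theorem to the divergence-measure tensor $T$, then exploit the two-dimensional spectral constraints. Your treatment of $\Theta$ matches the paper's, with the bonus that you make explicit why the ``$|T^s|$-a.e.'' conclusion upgrades to ``$\HH^1$-a.e.\ on $\widehat{J_u}$'' via the lower bound $|\Theta|\ge 1/\sqrt2$.

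The only noteworthy difference is your handling of $T^c$. You invoke the positive semidefiniteness of $\mu^c$ and run a spectral dichotomy on $\gamma=\tfrac{d\mu^c}{d|\mu^c|}$. The paper's argument is shorter: from \eqref{mesure T} the polar $\tfrac{dT^c}{d|T^c|}$ is symmetric and \emph{trace-free} (since $\Tr\bigl((\mu^c_{11}+\mu^c_{22}){\rm Id}-2\mu^c\bigr)=0$), and by De Philippis--Rindler it is singular; a symmetric trace-free $2\times 2$ matrix with vanishing determinant is zero, contradicting $|\tfrac{dT^c}{d|T^c|}|=1$ unless $|T^c|=0$. Your route is valid but the trace-free observation gives the conclusion in one line without needing $\mu^c\ge 0$.
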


\begin{proof}
According to \eqref{eq:divT=mesure}, the limiting stress-energy tensor is a measure $T$ (extended by zero in \(\R^2\setminus \overline{\Omega}\)) which satisfies the linear PDE ${\rm div}(T) \in \mathcal M(\R^2;\R^2)$. 
The wave cone of the divergence operator (acting on symmetric matrices) is defined by
$$\Lambda:=\bigcup_{\xi \in \R^2, \, |\xi|=1} \{ A \in \mathbb M^{2 \times 2}_{\rm sym} : \; A\xi=0\}$$
and it is immediate to check that it corresponds to the set of singular matrices. Hence according to \cite[Corollary 1.13]{DePhilippis_Rindler_2016} and using that $T^j$ and $T^c$ are singular to each other and both singular with respect to $\LL^2$, one finds that 
\begin{equation}\label{det Tj}
{\rm rank}\left(\Theta\right)\leq 1 \quad \HH^1\text{-a.e. on }\widehat{J_u}
\end{equation}
and
\begin{equation}\label{det Tc}
{\rm rank}\left(\frac{dT^c}{d|T^c|}\right)\leq 1 \quad |T^c|\text{-a.e. in }\overline \Omega.
\end{equation}
Since the matrix-valued measure $(\mu_{11}+\mu_{22}){\rm Id} -2\mu$ has zero trace, it follows from \eqref{mesure T} together with Proposition \ref{premier moment} that
$$\Tr\left(\Theta\right)=\Tr(\overline A)=1 \quad \HH^1\text{-a.e. on }\widehat{J_u}.$$
Hence, recalling \eqref{det Tj}, for $\HH^1$-a.e. $x \in \widehat{J_u}$, the eigenvalues of $\Theta(x)\in \mathbb M^{2 \times 2}_{\rm sym}$ are exactly $0$ and $1$ so that $\Theta(x)$ is indeed an orthogonal projector. 

Using \eqref{mesure T} and \eqref{det Tc}, it follows that for $|T^c|$-a.e. $x \in \overline \Omega$, $\frac{dT^c}{d|T^c|}(x) \in \mathbb M^{2 \times 2}_{\rm sym}$ is a trace free matrix with zero determinant, which implies that $\frac{dT^c}{d|T^c|}(x)=0$. It thus follows that $T^c=0$ and using again \eqref{mesure T},
$$\mu^c=\frac{\mu_{11}^c + \mu_{22}^c}{2} {\rm Id},$$
which completes the proof of the result by setting $\lambda^c:=\frac{\mu_{11}^c + \mu_{22}^c}{2}$.
\end{proof}

Note that, in the previous argument, we crucially used the fact that the dimension is set to be equal to two.

Reporting the information obtained in \eqref{definition de T}, \eqref{mesure T} and Lemma \ref{dfr} inside \eqref{eq:TDX} yields for all $X \in \mathcal C^1_c(\R^2;\R^2)$,
\begin{multline}\label{eq:cons-law}
\int_\Omega (|\nabla u|^2{\rm Id} -2 \nabla u \otimes \nabla u):DX\, dx + \int_{\widehat{J_u}} \Theta  : DX \, d\mathcal{H}^1\\
=-\int_{\partial \Omega} (X\cdot \nu) \, dm + \int_{\partial \Omega} |\partial_\tau g|^2 (X\cdot \nu)\, d\mathcal{H}^1 - 2\int_{\partial \Omega} (\nabla u \cdot \nu)(X \cdot \tau) \partial_\tau g \, d\mathcal{H}^1. 
\end{multline}

We already know that for $\HH^1$-a.e. $x \in \widehat{J_u}$, the matrix $\Theta(x)$ is an orthogonal projector.  We now make this information more precise by showing that it actually consists in the orthogonal projection onto the (approximate) tangent space to the rectifiable set $\widehat{J_u}$ at $x$. We proceed in two steps, by distinguishing interior points in $\Omega$ to boundary points $\partial \Omega$. 

\medskip

We first consider interior points. The proof of the following result is similar to that of \cite[Lemma 5.3]{BMRa23} with several simplications due to the fact that we are here working in dimension two.

\begin{lemma}\label{Theta interieur}
For $\mathcal{H}^1$-a.e $x \in \widehat{J_u} \cap \Omega=J_u$, one has $\Theta(x) = {\rm Id} - \nu_u(x) \otimes \nu_u(x)$. 
\end{lemma}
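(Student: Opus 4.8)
The plan is to combine the interior form of the conservation law \eqref{eq:cons-law} with a blow-up argument around a generic point of $J_u$.

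First I would restrict \eqref{eq:cons-law} to test fields $X\in\mathcal{C}^1_c(\Omega;\R^2)$, so that the three boundary integrals drop out. Recalling from Lemma \ref{dfr} that $T^c=0$ and from \eqref{mesure T} that $T^a=(|\nabla u|^2{\rm Id}-2\nabla u\otimes\nabla u)\LL^2$, this says exactly that ${\rm div}\,T=0$ in $\mathcal{D}'(\Omega;\mathbb{M}^{2\times2})$, where
\[
T=(|\nabla u|^2{\rm Id}-2\nabla u\otimes\nabla u)\LL^2\res\Omega+\Theta\,\mathcal{H}^1\res J_u .
\]

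Next I would fix a point $x_0$ in a subset of full $\mathcal{H}^1$-measure of $J_u$ such that: (i) $J_u$ admits an approximate tangent line at $x_0$ with unit normal $\nu_0:=\nu_u(x_0)$ and direction $\tau_0:=\nu_0^\perp$; equivalently the one-dimensional density of $\mathcal{H}^1\res J_u$ at $x_0$ equals one, so that, writing $\Phi_\rho(x):=(x-x_0)/\rho$ and $\ell_0:=\R\tau_0$, one has $\tfrac1\rho(\Phi_\rho)_\#(\mathcal{H}^1\res J_u)\wsto\mathcal{H}^1\res\ell_0$ in $\mathcal M_{\rm loc}(\R^2)$ by rectifiability of $J_u$ (see \cite{AFP00}); (ii) $x_0$ is a Lebesgue point of $\Theta$ with respect to $\mathcal{H}^1\res J_u$, with $\Theta(x_0)$ an orthogonal projector and $\Tr\Theta(x_0)=1$ (Lemma \ref{dfr}); and (iii) $\lim_{\rho\to0}\tfrac1\rho\int_{B_\rho(x_0)}|\nabla u|^2\,dx=0$. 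Property (iii) holds $\mathcal{H}^1$-a.e.\ on $J_u$ because $|\nabla u|^2\LL^2$ is mutually singular with $\mathcal{H}^1\res\Gamma$ for every $\mathcal{C}^1$ curve $\Gamma$ (as $\LL^2(\Gamma)=0$), so its Besicovitch derivative with respect to $\mathcal{H}^1\res\Gamma$ vanishes $\mathcal{H}^1\res\Gamma$-a.e., and $J_u$ is covered up to an $\mathcal{H}^1$-null set by countably many such curves. Setting $T_\rho:=\tfrac1\rho(\Phi_\rho)_\#T$, property (iii) forces the rescaled absolutely continuous part to satisfy $\tfrac1\rho\,|(\Phi_\rho)_\#T^a|(B_r)\le\tfrac{C}{\rho}\int_{B_{\rho r}(x_0)}|\nabla u|^2\,dx\to0$, while writing $\Theta=\Theta(x_0)+(\Theta-\Theta(x_0))$ and using (i)–(ii) gives $\tfrac1\rho(\Phi_\rho)_\#(\Theta\,\mathcal{H}^1\res J_u)\wsto\Theta(x_0)\,\mathcal{H}^1\res\ell_0$; hence $T_\rho\wsto\Theta(x_0)\,\mathcal{H}^1\res\ell_0$ in $\mathcal M_{\rm loc}(\R^2;\mathbb{M}^{2\times2})$ (the limit being uniquely identified, no subsequence is needed).

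Since ${\rm div}\,T=0$ in $\Omega$ and this property is scale invariant, for any $\widetilde X\in\mathcal{C}^1_c(\R^2;\R^2)$ the field $X(x):=\rho\,\widetilde X(\Phi_\rho(x))$ is admissible once $\rho$ is small enough that $x_0+\rho\,{\rm supp}\,\widetilde X\Subset\Omega$; plugging it in yields $\langle T_\rho,D\widetilde X\rangle=0$, and letting $\rho\to0$ gives ${\rm div}\big(\Theta(x_0)\,\mathcal{H}^1\res\ell_0\big)=0$ in $\mathcal{D}'(\R^2;\R^2)$. A short computation then shows that, for a constant $M\in\mathbb{M}^{2\times2}_{\rm sym}$, one has ${\rm div}(M\,\mathcal{H}^1\res\ell_0)=0$ if and only if $M\nu_0=0$, that is $M=\lambda\,\tau_0\otimes\tau_0$ for some $\lambda\in\R$: testing with fields of the form $X(x)=\phi(x\cdot\tau_0)\,\psi\big((x-x_0)\cdot\nu_0\big)\,w$ with $\psi(0)=0$, $\psi'(0)=1$, $w\in\R^2$ and $\phi$ a bump isolates the component $M\nu_0$, while $M=\lambda\,\tau_0\otimes\tau_0$ is divergence free by the fundamental theorem of calculus. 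Applying this to $M=\Theta(x_0)$ and using $\Tr\Theta(x_0)=1$ forces $\lambda=1$, so $\Theta(x_0)=\tau_0\otimes\tau_0={\rm Id}-\nu_u(x_0)\otimes\nu_u(x_0)$; as $x_0$ ranges over a set of full $\mathcal{H}^1$-measure in $J_u$, this is the claim. The only genuinely delicate point is the blow-up identification of $T_\rho$, and within it property (iii), namely that the Dirichlet defect $|\nabla u|^2$ carries no mass at scale $\rho$ around $\mathcal{H}^1$-a.e.\ point of $J_u$; everything else is inherited from the preceding sections or elementary, and — as in Lemma \ref{dfr} — dimension two enters only through the structure $T^c=0$.
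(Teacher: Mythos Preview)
Your argument is correct and follows the same blow-up scheme as the paper: pick a generic point $x_0\in J_u$ with approximate tangent line, unit density, vanishing rescaled Dirichlet energy, and the Lebesgue property for $\Theta$, rescale the interior conservation law $\langle T,DX\rangle=0$, and identify the blow-up limit as $\Theta(x_0)\,\mathcal H^1\res\ell_0$ satisfying a divergence-free condition on all of $\R^2$.

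The one genuine difference is in the final identification step. The paper reads the limit equation as saying that the rectifiable $1$-varifold $W=(\mathcal H^1\res T_{x_0}J_u)\otimes\delta_{\Theta(x_0)}$ is stationary in $B_1$, and then invokes the monotonicity formula \eqref{eq:monotonicity} to force the projector $\Theta(x_0)$ to project onto $T_{x_0}J_u$. You instead observe directly that for a constant symmetric matrix $M$ one has ${\rm div}(M\,\mathcal H^1\res\ell_0)=0$ in $\mathcal D'(\R^2)$ if and only if $M\nu_0=0$, by testing with fields that vanish on $\ell_0$ but have prescribed normal derivative; combined with $\Tr\Theta(x_0)=1$ this pins down $\Theta(x_0)=\tau_0\otimes\tau_0$. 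Your route is more elementary in that it avoids the varifold machinery altogether, while the paper's route makes visible why this is really a rigidity statement for stationary varifolds supported on a line --- a viewpoint that scales more transparently to higher codimension should one need it. Either way, the substantive content (the choice of blow-up point, the vanishing of the rescaled bulk term via (iii), and the passage to the tangent measure via (i)--(ii)) is identical.
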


\begin{proof}
We perform a blow-up argument on \eqref{eq:cons-law}. Let $x_0 \in J_u=\widehat{J_u}\cap \Omega$ be such that

\begin{enumerate}[label=(\roman*)]
\item \label{int i} $J_u$ admits an approximate tangent space at $x_0$, which is given by $T_{x_0} J_u = \nu_u (x_0) ^\perp$. 

\smallskip

\item \label{int ii} $\lim_{\rho \rightarrow 0} \frac{\mathcal{H}^1(J_u \cap B_\rho (x_0))}{2\pi \rho} =1;$

\smallskip

\item \label{int iii} $\lim_{\rho \rightarrow 0} \frac{1}{\rho} \int_{B_\rho (x_0)} |\nabla u|^2\, dx =0;$

\smallskip

\item \label{int iv} $x_0$ is a Lebesgue point of $\Theta$ with respect to $\mathcal{H}^1\res  J_u$. 
\end{enumerate} 

\smallskip

It turns out that $\mathcal{H}^1$-a.e points $x_0$ in $J_u \cap \Omega$ fulfill those four items as a consequence of the Besicovitch Differentiation Theorem (see \cite[Theorem 2.22]{AFP00}), the $\HH^1$-rectifiability of $J_u$ (see \cite[Theorem 2.63]{AFP00}), and the fact the measures $|\nabla u|^2 \mathcal{L}^2 \res \Omega$ and $\mathcal{H}^1 \res J_u$ are singular to each other.

\medskip

Let $\zeta \in \mathcal{C}_c(\mathbb{R}^2; \mathbb{R}^2)$ be a test vector-field such that $\text{Supp}(\zeta) \subset B_1$ and define $\varphi_\delta (x)= \zeta \left(\frac{x-x_0}{\delta}\right)$ for every $\delta >0$ small enough such that $B_\delta (x_0) \subset\subset \Omega$. Note that ${\rm Supp}(\varphi_\delta)  \subset \Omega$ so that $\varphi_\delta$ can be taken as a test function in \eqref{eq:cons-law}. It leads to
\begin{equation}\label{blowup1}
\int_{J_u \cap B_\delta(x_0)} \Theta  : D\varphi_\delta \, d\mathcal{H}^1 = - \int_{B_\delta(x_0)} (|\nabla u|^2 {\rm Id} - 2\nabla u \otimes \nabla u) : D\varphi_\delta \, dx,
\end{equation} 
which rewrites as
\begin{multline}\label{blowup2}
\frac{1}{\delta}\int_{J_u \cap B_\delta(x_0)} \Theta (x) : D\zeta\left(\frac{x-x_0}{\delta}\right) d\mathcal{H}^1(x) \\
= - \frac{1}{\delta}\int_{B_\delta(x_0)} (|\nabla u|^2 {\rm Id} - 2\nabla u \otimes \nabla u) : D\zeta\left(\frac{x-x_0}{\delta}\right)\, dx.
\end{multline}
Since $x_0$ satisfies \autoref{int iii}, one has
\begin{multline}\label{blowup3}
\left|\frac{1}{\delta}\int_{B_\delta(x_0)} (|\nabla u|^2 {\rm Id} - 2\nabla u \otimes \nabla u) : D\zeta\left(\frac{x-x_0}{\delta}\right)\, dx\right|\\
 \leqslant C\|D\zeta\|_{L^\infty(B_1)} \frac{1}{\delta} \int_{B_\delta (x_0)} |\nabla u|^2 \, dx \longrightarrow 0.
\end{multline}
On the other hand, by \autoref{int iv} we have
\begin{multline}\label{blowup4}
\left|\frac{1}{\delta}\int_{J_u\cap B_\delta(x_0)} (\Theta(x) - \Theta(x_0)) : D\zeta\left(\frac{x-x_0}{\delta}\right) \, d \mathcal{H}^1(x) \right| \\
\leqslant C\|D\zeta\|_{L^\infty(B_1)} \frac{1}{\delta} \int_{J_u \cap B_\delta (x_0)}|\Theta (x) - \Theta (x_0)| \, d\mathcal{H}^1(x) \longrightarrow 0.
\end{multline}
Combining \eqref{blowup2}, \eqref{blowup3}, \eqref{blowup4} and \autoref{int ii} yields
$$0 =  \Theta (x_0) : \lim\limits_{\delta\rightarrow 0} \frac{1}{\delta}\int_{J_u \cap B_\delta(x_0)} D\zeta\left(\frac{x-x_0}{\delta}\right) d\mathcal{H}^1(x)= \int_{T_{x_0}J_u \cap B_1} \Theta(x_0) : D\zeta(x) d\mathcal{H}^1(x).$$
Recalling that $\Theta(x_0)$ is an orthonormal projector, there exists a unit vector $e(x_0) \in \R^2$ such that $\Theta(x_0)={\rm Id} - e(x_0)\otimes e(x_0)$. In particular, the measure $W=(\HH^1 \res T_{x_0}J_u) \otimes \delta_{\Theta(x_0)} \in \mathcal{M}(B_1 \times \mathbf G_1)$ is a $1$-varifold and the previous computation shows that $W$ is stationary in $B_1$, i.e., $\delta W=0$ in $B_1$. Hence, according to the monotonicity formula \eqref{eq:monotonicity}, for all $0<r<\rho<1$,
$$\frac{\mathcal{H}^1(T_{x_0}J_u \cap B_\rho)}{\rho} - \frac{\mathcal{H}^1(T_{x_0}J_u \cap B_r)}{r} \\
 = \int_{T_{x_0}J_u \cap (B_{\rho}\setminus B_r)} \frac{|e(x_0)\cdot y|^2}{|y|^3} \, d\mathcal{H}^1 (y).$$
Since the left-hand side of the previous equality vanishes, we obtain
\[\int_{T_{x_0}J_u \cap (B_{\rho}\setminus B_r)} \frac{|e(x_0)\cdot y|^2}{|y|^3} d\mathcal{H}^1 (y) = 0.\]
We deduce that that $y\cdot e(x_0) =0$ for $\mathcal{H}^1$-a.e. $y\in T_{x_0} J_u \cap B_1$, which implies that $T_{x_0} J_u  = e(x_0)^\perp$ and $e(x_0)= \pm \nu_u(x_0)$. 
\end{proof}

We next address the case of boundary points following the ideas of \cite[Lemma 5.4]{BMRa23}. 

\begin{lemma}\label{Theta bord}
For $\mathcal{H}^1$-a.e $x$ in $\widehat{J_u} \cap \partial\Omega$, we have $\Theta (x) = {\rm Id} -\nu(x) \otimes \nu(x)={\rm Id}-\nu_u(x)\otimes \nu_u(x)$, where $\nu$ is the outward unit normal to $\partial \Omega$. 
\end{lemma}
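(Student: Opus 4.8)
The plan is to reproduce the blow-up argument of Lemma~\ref{Theta interieur}, now localized around a point $x_0\in\widehat{J_u}\cap\partial\Omega=\partial\Omega\cap\{u\neq g\}$. Two features make this possible. First, the conservation law \eqref{eq:cons-law} holds for \emph{every} $X\in\mathcal C^1_c(\mathbb R^2;\mathbb R^2)$ --- with no tangency constraint on $\partial\Omega$ --- so we may test it against cut-off functions concentrating at a boundary point. Second, since $\partial\Omega$ is of class $\mathcal C^1$, the approximate tangent line to $\widehat{J_u}\cap\partial\Omega$ at a typical $x_0$ is \emph{already} known to be $\nu(x_0)^\perp$; hence, contrary to Lemma~\ref{Theta interieur}, no stationarity/monotonicity argument is needed to identify it, and the desired identity is read off directly from the blown-up equation.

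First I would fix a ``good'' point $x_0\in\partial\Omega\cap\{u\neq g\}$ at which: $\partial\Omega$ admits the tangent line $L:=\nu(x_0)^\perp$ and $\{u\neq g\}$ has density one in $\partial\Omega$; $\rho^{-1}\int_{B_\rho(x_0)\cap\Omega}|\nabla u|^2\,dx\to0$; $\rho^{-1}\mathcal H^1(J_u\cap B_\rho(x_0))\to0$; $x_0$ is a Lebesgue point of $\Theta$ with respect to $\mathcal H^1\res\widehat{J_u}$ (equivalently, with respect to $\mathcal H^1\res(\partial\Omega\cap\{u\neq g\})$); $x_0$ is not an atom of the boundary measure $m$; and $\|\nabla u\cdot\nu\|_{L^2(\partial\Omega\cap B_\rho(x_0))}\to0$. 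That $\mathcal H^1$-a.e.\ point of $\partial\Omega\cap\{u\neq g\}$ enjoys these properties follows from the Besicovitch differentiation theorem, the $\mathcal C^1$-regularity (hence rectifiability) of $\partial\Omega$, the mutual singularity with $\mathcal H^1\res\partial\Omega$ of both $|\nabla u|^2\mathcal L^2\res\Omega$ and $\mathcal H^1\res J_u$ (these being carried by subsets of the open set $\Omega$), the absolute continuity of the Lebesgue integral, and the fact that $m$ has at most countably many atoms.

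Then I would blow up: given $\zeta\in\mathcal C^1_c(B_1;\mathbb R^2)$, set $\varphi_\delta(x):=\zeta((x-x_0)/\delta)$ for $\delta$ small and insert $X=\varphi_\delta$ into \eqref{eq:cons-law}. Since $D\varphi_\delta=\delta^{-1}D\zeta((\cdot-x_0)/\delta)$ is supported in $B_\delta(x_0)$: the interior Dirichlet term is $O\big(\delta^{-1}\int_{B_\delta(x_0)\cap\Omega}|\nabla u|^2\,dx\big)$ and the $J_u$-portion of $\int_{\widehat{J_u}}\Theta:D\varphi_\delta\,d\mathcal H^1$ is $O\big(\delta^{-1}\mathcal H^1(J_u\cap B_\delta(x_0))\big)$ (recall $|\Theta|\equiv1$), so both vanish as $\delta\to0$; the three boundary integrals on the right-hand side of \eqref{eq:cons-law} carry \emph{no} factor $\delta^{-1}$ and are controlled, respectively, by $\|\zeta\|_{L^\infty}m(B_\delta(x_0))$, by $C\,\mathcal H^1(\partial\Omega\cap B_\delta(x_0))$, and by $C\|\nabla u\cdot\nu\|_{L^2(\partial\Omega\cap B_\delta(x_0))}\,\mathcal H^1(\partial\Omega\cap B_\delta(x_0))^{1/2}$, hence all vanish; and the remaining term, by the Lebesgue point property, the density-one property of $\{u\neq g\}$ in $\partial\Omega$, and the elementary blow-up limit $\delta^{-1}\int_{\partial\Omega}\psi((x-x_0)/\delta)\,d\mathcal H^1\to\int_L\psi\,d\mathcal H^1$ (for $\psi\in\mathcal C_c(\mathbb R^2)$), converges to $\Theta(x_0):\int_{L\cap B_1}D\zeta\,d\mathcal H^1$. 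Passing to the limit therefore gives $\Theta(x_0):\int_{L\cap B_1}D\zeta\,d\mathcal H^1=0$ for all $\zeta\in\mathcal C^1_c(B_1;\mathbb R^2)$.

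To conclude, I would write $L=\mathbb R\tau_0$ with $\tau_0\perp\nu(x_0)$ a unit vector. Since $\int_{L\cap B_1}\partial_{\tau_0}\zeta\,d\mathcal H^1=0$ by the fundamental theorem of calculus (as $\zeta$ vanishes near $\partial B_1$), one has $\int_{L\cap B_1}D\zeta\,d\mathcal H^1=a(\zeta)\otimes\nu(x_0)$, and $a(\zeta)$ ranges over all of $\mathbb R^2$ as $\zeta$ varies (e.g.\ $\zeta(y)=(y\cdot\nu(x_0))\,\chi(y)\,b$ with $\chi\in\mathcal C^\infty_c(B_1)$, $\chi\equiv1$ near $0$, yields $a(\zeta)$ parallel to $b$). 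Using $\Theta(x_0):(a\otimes\nu(x_0))=a\cdot(\Theta(x_0)\nu(x_0))$, we deduce $\Theta(x_0)\nu(x_0)=0$. By Lemma~\ref{dfr}, $\Theta(x_0)$ is an orthogonal projector of trace one, i.e.\ of the form ${\rm Id}-e\otimes e$ for some $e\in\mathbf S^1$; together with $\Theta(x_0)\nu(x_0)=0$ this forces $e=\pm\nu(x_0)$, whence $\Theta(x_0)={\rm Id}-\nu(x_0)\otimes\nu(x_0)={\rm Id}-\nu_u(x_0)\otimes\nu_u(x_0)$, the last equality holding because $\nu_u(x_0)=\pm\nu(x_0)$ at boundary points of $\widehat{J_u}$. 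I expect the only genuinely delicate point to be checking that the right-hand side of \eqref{eq:cons-law} truly disappears under blow-up --- i.e.\ that neither $m$ nor the boundary traces concentrate at $\mathcal H^1$-a.e.\ $x_0\in\widehat{J_u}\cap\partial\Omega$; because those terms carry no scaling factor, this is comparatively soft and reduces to the non-atomicity of $m$ and to $\nabla u\cdot\nu\in L^2(\partial\Omega)$ from Lemma~\ref{boundary term BMR}.
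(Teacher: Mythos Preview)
Your argument is correct and follows the same blow-up strategy as the paper, but with two genuine simplifications worth recording. First, the paper tests \eqref{eq:cons-law} with $\varphi_\delta=\psi((\cdot-x_0)/\delta)\,\zeta$ (a scalar cut-off times a fixed vector field) and works with the full set $\widehat{J_u}$ via its approximate tangent at $x_0$, whereas you use the interior-style test $\varphi_\delta=\zeta((\cdot-x_0)/\delta)$ and instead split $\widehat{J_u}\cap B_\delta(x_0)$ into $J_u$ and $\partial\Omega\cap\{u\neq g\}$, disposing of the former with the extra density condition $\rho^{-1}\mathcal H^1(J_u\cap B_\rho(x_0))\to 0$; both choices lead to the same limit equation on $L=\nu(x_0)^\perp$. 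Second, from that limit the paper obtains only $\Theta(x_0):\nu(x_0)\otimes\nu(x_0)=0$ and then invokes the projector identity $|\Theta(x_0)\nu(x_0)|^2=\nu(x_0)\cdot\Theta(x_0)\nu(x_0)$ to deduce $\Theta(x_0)\nu(x_0)=0$, while you observe directly that $\int_{L\cap B_1}D\zeta\,d\mathcal H^1=a(\zeta)\otimes\nu(x_0)$ with $a(\zeta)$ spanning $\mathbb R^2$, which yields $\Theta(x_0)\nu(x_0)=0$ without that extra step. Your handling of the right-hand side of \eqref{eq:cons-law} matches the paper's: those boundary terms carry no $\delta^{-1}$ factor, so non-atomicity of $m$ and $\nabla u\cdot\nu\in L^2(\partial\Omega)$ suffice.
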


\begin{proof}
Let $x_0 \in \widehat{J_u}\cap \partial \Omega$ be such that :

\smallskip

\begin{enumerate}[label=(\roman*)]
\item \label{bound i} $x_0$ is a Lebesgue point of $\Theta$ with respect to $\mathcal{H}^1 \res \widehat{J_u} \cap \partial \Omega$; 

\smallskip

\item \label{bound ii} $\widehat{J_u}$ admits an approximate tangent space at $x_0$ which is given by $T_{x_0}\widehat{J_u} = \nu_u (x_0)^\perp$;

\smallskip

\item \label{bound iii} $\nu_u (x_0) = \nu (x_0)$;

\smallskip

\item \label{bound iv} $\lim\limits_{\rho \rightarrow 0} \frac{\mathcal{H}^1(\widehat{J_u} \cap B_\rho (x_0))}{2\pi \rho} =1;$

\smallskip

\item \label{bound v} $\lim\limits_{\rho \rightarrow 0} \frac{1}{\rho} \int_{B_\rho (x_0)\cap \Omega} |\nabla u|^2\, dx =0;$

\smallskip

\item \label{bound vi} $m(\{x_0\})=0$.

\end{enumerate} 

Let us first prove that $\mathcal{H}^1$-a.e points $x$ in $\widehat{J_u} \cap \partial\Omega$ fullfil those conditions. The validity of \autoref{bound i} follows from the Besicovitch Differentiation Theorem (see \cite[Theorem 2.22]{AFP00}). Then, the rectifiability of $\widehat{J_u}$, the locality of the approximate tangent space (see \cite[Theorem 2.85 and Remark 2.87]{AFP00}) and the Besicovitch-Mastrand-Mattila Theorem (see \cite[Theorem 2.63 ]{AFP00}) ensure that \autoref{bound ii}, \autoref{bound iii} and \autoref{bound iv} are also satisfied. The validity of \autoref{bound v} is a direct consequence of the fact that the measures $|\nabla u|^2 \mathcal{L}^2\res \Omega$ and $\mathcal{H}^1\res \widehat{J_u}$ are singular to each other. Finally, since $m$ is a finite measure, it follows that its set of atoms is at most countable, hence $\HH^1$-negligible which ensure that \autoref{bound vi} holds.

\medskip

We now prove that $\nu(x_0)$ belongs to the kernel of $\Theta(x_0)$, which, combined with Lemma \ref{dfr} together with \autoref{bound iii}, implies that $\Theta(x_0)= {\rm Id} - \nu_u (x_0) \otimes \nu_u (x_0)={\rm Id}-\nu(x_0)\otimes \nu(x_0)$.

Let $\psi \in \mathcal{C}^1_c (B_1)$ be a scalar test function, and $\zeta \in \mathcal{C}_c^1(\mathbb{R}^2; \mathbb{R}^2)$ a test vector field. For $\delta>0$, set 
$$\varphi_\delta = \psi\left(\dfrac{\cdot-x_0}{\delta}\right) \zeta  \in \mathcal C^1_c(B_\delta(x_0);\R^2),$$
and notice that 
$$D\varphi_\delta = \psi \left(\dfrac{\cdot -x_0}{\delta}\right) D\zeta \ + \zeta \otimes \dfrac{1}{\delta}\nabla \psi \left(\dfrac{\cdot -x_0}{\delta}\right).$$

Using $\varphi_\delta$ as a test function in \eqref{eq:cons-law} yields 
\begin{multline}\label{boundary1}
\int_{B_\delta(x_0) \cap \Omega} (|\nabla u|^2 -2 \nabla u \otimes \nabla u) : D\varphi_\delta \ dx + \int_{\widehat{J_u} \cap B_\delta (x_0)} \Theta : D\varphi_\delta \ d\mathcal{H}^1  \\
=-\int_{\partial \Omega} (\varphi_\delta\cdot \nu) \,dm + \int_{\partial \Omega} |\partial_\tau g|^2 (\varphi_\delta \cdot \nu)\, d\mathcal{H}^1\\
- 2\int_{\partial\Omega \cap B_\delta (x_0)} (\nabla u \cdot \nu)  \partial_\tau g (\tau\cdot \varphi_\delta) \,d \mathcal{H}^1.
\end{multline}

Observe that 
\begin{multline*}
\left|\int_{B_\delta (x_0) \cap \Omega} (|\nabla u|^2 -2 \nabla u \otimes \nabla u) : (D\zeta) \psi\left(\dfrac{\cdot -x_0}{\delta}\right)dx \right| \\
\leqslant C \|\psi\|_{L^\infty(B_1)} \|D\zeta\|_{L^\infty(\R^2)}  \int_{B_\delta (x_0) \cap \Omega} |\nabla u|^2\, dx\xrightarrow[\delta \to 0]{} 0
\end{multline*}
and, thanks to \autoref{bound v}
\begin{multline*}
\left|\int_{B_\delta (x_0) \cap \Omega} (|\nabla u|^2 -2 \nabla u \otimes \nabla u) : \zeta \otimes \dfrac{1}{\delta}\nabla \psi \left(\dfrac{\cdot -x_0}{\delta}\right)dx\right| \\
\leqslant C \|\nabla\psi\|_{L^\infty(B_1)} \|D\zeta\|_{L^\infty(\R^2)} \frac{1}{\delta} \int_{B_\delta (x_0) \cap \Omega} |\nabla u|^2 dx  \xrightarrow[\delta \to 0]{} 0.
\end{multline*}
We then deduce that
\begin{equation}\label{boundaryI}
\int_{B_\delta(x_0) \cap \Omega} (|\nabla u|^2 -2 \nabla u \otimes \nabla u) : D\varphi_\delta \ dx \xrightarrow[\delta \to 0]{} 0.
\end{equation}

\medskip

Next, we have
\begin{multline}\label{boundaryIIa}
\left|\int_{\widehat{J_u}\cap B_\delta(x_0)} \Theta : (D\zeta) \psi\left(\dfrac{\cdot -x_0}{\delta}\right) d \mathcal{H}^1\right|
\leqslant C \|\psi\|_{L^\infty(B_1)} \|D\zeta\|_{L^\infty(\R^2)} \HH^1(\widehat{J_u}\cap B_\delta(x_0)) \xrightarrow[\delta \to 0]{} 0.
\end{multline}
and according to \autoref{bound i}
\begin{multline}\label{boundaryIIb1}
\left|\int_{\widehat{J_u}\cap B_\delta(x_0)} \left(\Theta(x)-\Theta(x_0) \right):\zeta(x) \otimes \dfrac{1}{\delta}\nabla \psi \left(\dfrac{x -x_0}{\delta}\right) d \mathcal{H}^1\right| \\
\leqslant C \|\nabla \psi\|_{L^\infty(B_1)} \|\zeta\|_{L^\infty(\R^2)}  \frac{1}{\delta} \int_{\widehat{J_u}\cap B_\delta(x_0)} \, |\Theta(x) - \Theta(x_0)| \,d \mathcal{H}^1(x) \xrightarrow[\delta \to 0]{} 0.
\end{multline}
Combining \eqref{boundaryIIa}, \eqref{boundaryIIb1}, the continuity of $\zeta$ and \autoref{bound ii} yield
\begin{align}\label{boundaryIIc}
\lim\limits_{\delta \rightarrow 0} \int_{\widehat{J_u} \cap B_\delta (x_0)} \Theta : D\varphi_\delta \, d \mathcal{H}^1& = \lim\limits_{\delta \rightarrow 0} \frac{1}{\delta}\int_{\widehat{J_u}\cap B_\delta(x_0)} \Theta(x) : \zeta (x) \otimes \nabla \psi \left(\dfrac{x -x_0}{\delta}\right) d \mathcal{H}^1(x)\nonumber\\
& = \lim\limits_{\delta \rightarrow 0} \frac{1}{\delta}\int_{\widehat{J_u}\cap B_\delta(x_0)} \Theta(x_0) : \zeta (x) \otimes \nabla \psi \left(\dfrac{x -x_0}{\delta}\right) d \mathcal{H}^1(x)\nonumber\\
& = \lim\limits_{\delta \rightarrow 0} \frac{1}{\delta}\int_{\widehat{J_u}\cap B_\delta(x_0)} \Theta(x_0) : \zeta (x_0) \otimes \nabla \psi \left(\dfrac{x -x_0}{\delta}\right) d \mathcal{H}^1(x)\nonumber\\
& = \int_{T_{x_0} \widehat{J_u} \cap B_1} \Theta(x_0) : \zeta(x_0)\otimes \nabla \psi (y) \,d\mathcal{H}^1(y).
\end{align}

\medskip

Finally \autoref{bound vi} leads to
\begin{multline}\label{boundaryIII}
\left|-\int_{\partial \Omega} (\varphi_\delta\cdot \nu) \,dm + \int_{\partial \Omega} |\partial_\tau g|^2 (\varphi_\delta \cdot \nu)\, d\mathcal{H}^1- 2\int_{\partial\Omega \cap B_\delta (x_0)} (\nabla u \cdot \nu)  \partial_\tau g (\tau\cdot \varphi_\delta) \,d \mathcal{H}^1\right| \\
\leqslant \|\zeta\|_{L^\infty(\R^2)} \|\psi\|_{L^\infty(B_1)} \left[m(B_\delta (x_0)) +\int_{B_\delta (x_0) \cap \partial \Omega} (|\nabla u\cdot \nu| + |\nabla g|^2)\, d\HH^1\right] \xrightarrow[\delta \to 0]{} 0.
\end{multline}

\medskip

Gathering \eqref{boundary1}, \eqref{boundaryI}, \eqref{boundaryIIc}, \eqref{boundaryIII}  leads to 
\begin{equation}\label{boundary2}
\int_{T_{x_0}\widehat{J_u} \cap B_1} \Theta (x_0) : \zeta(x_0) \otimes \nabla \psi \, d\mathcal{H}^1 = 0.
\end{equation}
We now specify \eqref{boundary2} for $\zeta$ such that $\zeta(x_0) = \nu(x_0)$. Denoting by $\tau(x_0)$ an orthonormal vector to $\nu(x_0)$ we decompose $\nabla \psi$ as
$$\nabla \psi = (\partial_{\tau(x_0)} \psi) \tau(x_0) + (\partial_{\nu(x_0)} \psi) \nu (x_0).$$
Since ${\rm Supp}(\psi) \subset B_1$, then  $\int_{T_{x_0}\widehat{J_u} \cap B_1} \partial_{\tau(x_0)} \psi \, d\HH^1=0$ and \eqref{boundary2} becomes
$$\int_{T_{x_0}\widehat{J_u} \cap B_1} \Theta(x_0):(\nu(x_0) \otimes \nu(x_0) )\partial_{\nu(x_0)} \psi \, d \mathcal{H}^1 =0\quad \text{ for all } \psi \in \mathcal{C}^1_c (B_1).$$
We conclude that 
$$\nu(x_0) \cdot (\Theta(x_0) \nu(x_0)) = \Theta(x_0) : (\nu(x_0) \otimes \nu(x_0))= 0$$
and thus, since \(\Theta(x_0)\) is a projector, we obtain $\Theta(x_0) \nu(x_0)=0$, as claimed. 
\end{proof}

We are now in position to complete the proof of our main result.

\begin{proof}[Proof of \autoref{Theorem 1}]
According to \eqref{eq:cons-law}, Lemma \ref{Theta interieur} and Lemma \ref{Theta bord}, we have that for all $X \in \mathcal C^1_c(\R^2;\R^2)$,
\begin{multline*}
\int_{\Omega} (|\nabla u|^2 -2 \nabla u \otimes \nabla u) : DX \, dx + \int_{\widehat{J_u}} ({\rm Id}-\nu_u \otimes \nu_u) : DX \, d\mathcal{H}^1  \\
=-\int_{\partial \Omega} (X\cdot \nu) \,dm + \int_{\partial \Omega} |\partial_\tau g|^2 (X \cdot \nu)\, d\mathcal{H}^1- 2\int_{\partial\Omega} (\nabla u \cdot \nu) (X\cdot \tau) \partial_\tau g  \,d \mathcal{H}^1.
\end{multline*}
Specifying to vector fields $X \in \mathcal{C}_c^1(\R^2;\R^2)\) such that $X\cdot \nu = 0$ on \(\p \Omega\) leads to
\begin{multline*}
\int_{\Omega} (|\nabla u|^2 -2 \nabla u \otimes \nabla u) : DX \, dx + \int_{\widehat{J_u}} ({\rm Id}-\nu_u \otimes \nu_u) : DX \, d\mathcal{H}^1
=- 2\int_{\partial\Omega} (\nabla u \cdot \nu) X\cdot \nabla g \,d \mathcal{H}^1,
\end{multline*}
which completes the proof of Theorem \ref{Theorem 1}. 
\end{proof}

\begin{remark}{\rm The assumption of convergence of the energy can also be used to pass to the limit in the second inner variation in general, cf.\ \cite{Le_2011,Le_2015,LS19,BMRa23}. Here the sole convergence of the phase field energy is not sufficient a priori to pass to the limit in the second inner variation of the AT energy. 

Thanks to the varifold convergence and the equi-partition of energy, we first observe as in \cite[Corollary 5.1]{BMRa23} that if \(\{(u_\e,v_\e)\}_{\e>0}\) is a family of 
critical points of \(AT_\e\) satisfying the assumptions of Theorem \ref{Theorem 1} then, up to a subsequence
$$\nabla w_\e \otimes \nabla w_\e \LL^2\res \Omega \wto ({\rm Id}-\overline A )\HH^1\res J_u \quad \text{ weakly* in }\mathcal M(\Omega;\mathbb M^{2 \times 2}_{\rm sym})$$
and
$$\e \nabla v_\e \otimes \nabla v_\e \LL^2\res \Omega \wto \frac12 ({\rm Id}-\overline A )\HH^1\res J_u \quad \text{ weakly* in }\mathcal M(\Omega;\mathbb M^{2 \times 2}_{\rm sym}).$$
Recalling that $({\rm Id} - \overline A)\HH^1\res J_u=\nu_u \otimes \nu_u \HH^1\res J_u -((\Tr(\mu^j)){\rm Id} -2\mu^j)$

Using Propositions \ref{cv dirichlet} and \ref{cv xor}, the expression of the second inner variation of the AT energy 
computed in \cite[Lemma A.3]{BMRa23}, we find that if \(X\in \C^\infty_c(\Omega;\R^2)\),
 \begin{eqnarray*}
\lim_{\e \to 0} \delta^2AT_\e(u_\e,v_\e)[X] & = &\int_\Omega |\nabla u|^2 (({\rm div} X)^2 -\text{Tr}((DX)^2))\, dx+ \langle \text{Tr}(\mu^s),({\rm div} X)^2 -\Tr((DX)^2)\rangle\\
&&-4\int_\Omega ((\nabla u \otimes \nabla u):DX) \dive X \, dx -4\langle \mu^s, (\dive X) DX \rangle \\
&&+4\int_\Omega  (\nabla u \otimes \nabla u) :(DX)^2 \, dx +4\langle \mu^s, (DX)^2\rangle \\
&&+2 \int_\Omega |DX^T\nabla u|^2\, dx + 2\langle \mu^s, (DX) (DX)^T \rangle\\
&&+\int_{J_u} ((\dive X)^2 -\Tr((DX)^2))\, d\HH^1\\
&&-2\int_{J_u} (\nu_u \otimes \nu_u):(DX) \dive X\, d\HH^1 +2 \langle (\Tr(\mu^j)){\rm Id} -2\mu^j,(DX)\dive X\rangle\\
&&+2\int_{J_u} (\nu_u \otimes \nu_u):(DX) ^2\, d\HH^1 -2 \langle (\Tr(\mu^j)){\rm Id} -2\mu^j,(DX)^2\rangle\\
&&+\int_{J_u} |DX^T \nu_u|^2\, d\HH^1 -\langle (\Tr(\mu^j)){\rm Id} -2\mu^j,(DX)(DX)^T\rangle.
\end{eqnarray*}
Thus it does not seem to have any compensation phenomenon to get rid-off the terms involving the singular measure $\mu$, and stability of the the second inner variation might not be satisfied.}
\end{remark}

\section*{Acknowledgments}
We would like to warmly thank Antonin Chambolle and Fabrice Bethuel for helpful discussions about this paper. This work was supported by a public grant from the Fondation Mathématique Jacques Hadamard.

\bibliographystyle{abbrv}
\bibliography{bibliography}

@book {DLF,
    AUTHOR = {De Lellis, Camillo and Focardi, Matteo},
     TITLE = {The regularity theory for the {M}umford-{S}hah functional on
              the plane},
    SERIES = {EMS Monographs in Mathematics},
 PUBLISHER = {European Mathematical Society (EMS), Z\"urich},
      YEAR = {[2025] \copyright 2025},
     PAGES = {x+256},
}

@book {David,
    AUTHOR = {David, Guy},
     TITLE = {Singular sets of minimizers for the {M}umford-{S}hah
              functional},
    SERIES = {Progress in Mathematics},
    VOLUME = {233},
 PUBLISHER = {Birkh\"auser Verlag, Basel},
      YEAR = {2005},
     PAGES = {xiv+581},
}

@article {MS89,
    AUTHOR = {Mumford, David and Shah, Jayant},
     TITLE = {Optimal approximations by piecewise smooth functions and
              associated variational problems},
   JOURNAL = {Comm. Pure Appl. Math.},
  FJOURNAL = {Communications on Pure and Applied Mathematics},
    VOLUME = {42},
      YEAR = {1989},
    NUMBER = {5},
     PAGES = {577--685},
      ISSN = {0010-3640,1097-0312},
   MRCLASS = {49F20 (41A30 92A27)},
  MRNUMBER = {997568},
MRREVIEWER = {Constantin\ Udri\c{s}te},
       DOI = {10.1002/cpa.3160420503},
       URL = {https://doi.org/10.1002/cpa.3160420503},
}

@article {FLS09,
    AUTHOR = {Francfort, Gilles A. and Le, Nam Q. and Serfaty, Sylvia},
     TITLE = {Critical points of {A}mbrosio-{T}ortorelli converge to
              critical points of {M}umford-{S}hah in the one-dimensional
              {D}irichlet case},
   JOURNAL = {ESAIM Control Optim. Calc. Var.},
  FJOURNAL = {ESAIM. Control, Optimisation and Calculus of Variations},
    VOLUME = {15},
      YEAR = {2009},
    NUMBER = {3},
     PAGES = {576--598},
      ISSN = {1292-8119,1262-3377},
   MRCLASS = {49Q20 (35B38 35J60 49J45)},
  MRNUMBER = {2542574},
MRREVIEWER = {Francesco\ Maggi},
       DOI = {10.1051/cocv:2008041},
       URL = {https://doi.org/10.1051/cocv:2008041},
}

@article {L10,
    AUTHOR = {Le, Nam Q.},
     TITLE = {Convergence results for critical points of the one-dimensional
              {A}mbrosio-{T}ortorelli functional with fidelity term},
   JOURNAL = {Adv. Differential Equations},
  FJOURNAL = {Advances in Differential Equations},
    VOLUME = {15},
      YEAR = {2010},
    NUMBER = {3-4},
     PAGES = {255--282},
      ISSN = {1079-9389},
   MRCLASS = {49J45 (35B38 35J20 49Q20)},
  MRNUMBER = {2588770},
MRREVIEWER = {Anna\ Verde},
}

@article {BMRb23,
    AUTHOR = {Babadjian, Jean-Fran\c{c}ois and Millot, Vincent and Rodiac,
              R\'{e}my},
     TITLE = {A note on the one-dimensional critical points of the
              {A}mbrosio-{T}ortorelli functional},
   JOURNAL = {Asymptot. Anal.},
  FJOURNAL = {Asymptotic Analysis},
    VOLUME = {135},
      YEAR = {2023},
    NUMBER = {3-4},
     PAGES = {349--362},
      ISSN = {0921-7134,1875-8576},
   MRCLASS = {49J45 (49Q20 58E05)},
  MRNUMBER = {4667073},
       DOI = {10.3233/asy-231857},
       URL = {https://doi.org/10.3233/asy-231857},
}

@article {BMRa23,
    AUTHOR = {Babadjian, Jean-Fran\c{c}ois and Millot, Vincent and Rodiac, R\'{e}my},
     TITLE = {On the convergence of critical points of the
              {A}mbrosio-{T}ortorelli functional},
   JOURNAL = {Ann. Inst. H. Poincar\'{e} C Anal. Non Lin\'{e}aire},
  FJOURNAL = {Annales de l'Institut Henri Poincar\'{e} C. Analyse Non Lin\'{e}aire},
    VOLUME = {41},
      YEAR = {2024},
    NUMBER = {6},
     PAGES = {1367--1417},
      ISSN = {0294-1449},
   MRCLASS = {49J45 (35B38 35J60 49Q20)},
  MRNUMBER = {4800608},
MRREVIEWER = {Danka Lu\v{c}i\'{c}},
       DOI = {10.4171/aihpc/102},
       URL = {https://doi-org.ezproxy.universite-paris-saclay.fr/10.4171/aihpc/102},
}

@article {HT00,
    AUTHOR = {Hutchinson, John E. and Tonegawa, Yoshihiro},
     TITLE = {Convergence of phase interfaces in the van der
              {W}aals-{C}ahn-{H}illiard theory},
   JOURNAL = {Calc. Var. Partial Differential Equations},
  FJOURNAL = {Calculus of Variations and Partial Differential Equations},
    VOLUME = {10},
      YEAR = {2000},
    NUMBER = {1},
     PAGES = {49--84},
      ISSN = {0944-2669,1432-0835},
   MRCLASS = {49Q20 (35B25 35B38 35J60 49J45 80A22)},
  MRNUMBER = {1803974},
MRREVIEWER = {Werner\ Horn},
       DOI = {10.1007/PL00013453},
       URL = {https://doi.org/10.1007/PL00013453},
}

@article {FM98,
    AUTHOR = {Francfort, G. A. and Marigo, J.-J.},
     TITLE = {Revisiting brittle fracture as an energy minimization problem},
   JOURNAL = {J. Mech. Phys. Solids},
  FJOURNAL = {Journal of the Mechanics and Physics of Solids},
    VOLUME = {46},
      YEAR = {1998},
    NUMBER = {8},
     PAGES = {1319--1342},
      ISSN = {0022-5096,1873-4782},
   MRCLASS = {73M25 (49J10 73V25)},
  MRNUMBER = {1633984},
       DOI = {10.1016/S0022-5096(98)00034-9},
       URL = {https://doi.org/10.1016/S0022-5096(98)00034-9},
}

@article {AT92,
    AUTHOR = {Ambrosio, Luigi and Tortorelli, V. M.},
     TITLE = {On the approximation of free discontinuity problems},
   JOURNAL = {Boll. Un. Mat. Ital. B (7)},
  FJOURNAL = {Unione Matematica Italiana. Bollettino. B. Serie VII},
    VOLUME = {6},
      YEAR = {1992},
    NUMBER = {1},
     PAGES = {105--123},
   MRCLASS = {49J27 (65K10 78A99)},
  MRNUMBER = {1164940},
MRREVIEWER = {Ll.\ G.\ Chambers},
}

@article {M87,
    AUTHOR = {Modica, Luciano},
     TITLE = {Gradient theory of phase transitions with boundary contact
              energy},
   JOURNAL = {Ann. Inst. H. Poincar\'{e} Anal. Non Lin\'{e}aire},
  FJOURNAL = {Annales de l'Institut Henri Poincar\'{e}. Analyse Non
              Lin\'{e}aire},
    VOLUME = {4},
      YEAR = {1987},
    NUMBER = {5},
     PAGES = {487--512},
      ISSN = {0294-1449},
   MRCLASS = {76T05 (49A29 49H05 82A25)},
  MRNUMBER = {921549},
MRREVIEWER = {L.\ Hsiao},
       URL = {http://www.numdam.org/item?id=AIHPC_1987__4_5_487_0},
}

@article {S88,
    AUTHOR = {Sternberg, Peter},
     TITLE = {The effect of a singular perturbation on nonconvex variational
              problems},
   JOURNAL = {Arch. Rational Mech. Anal.},
  FJOURNAL = {Archive for Rational Mechanics and Analysis},
    VOLUME = {101},
      YEAR = {1988},
    NUMBER = {3},
     PAGES = {209--260},
      ISSN = {0003-9527},
   MRCLASS = {49A50 (49F10 76A02)},
  MRNUMBER = {930124},
MRREVIEWER = {Luciano\ Modica},
       DOI = {10.1007/BF00253122},
       URL = {https://doi.org/10.1007/BF00253122},
}

@book {BFM08,
    AUTHOR = {Bourdin, Blaise and Francfort, Gilles A. and Marigo,
              Jean-Jacques},
     TITLE = {The variational approach to fracture},
      NOTE = {Reprinted from J. Elasticity {\bf 91} (2008), no. 1-3
              [MR2390547],
              With a foreword by Roger Fosdick},
 PUBLISHER = {Springer, New York},
      YEAR = {2008},
     PAGES = {x+164},
      ISBN = {978-1-4020-6394-7},
   MRCLASS = {49J40 (49J45 74G15 74R10)},
  MRNUMBER = {2473620},
       DOI = {10.1007/978-1-4020-6395-4},
       URL = {https://doi.org/10.1007/978-1-4020-6395-4},
}

@article {B07,
    AUTHOR = {Bourdin, Blaise},
     TITLE = {Numerical implementation of the variational formulation for
              quasi-static brittle fracture},
   JOURNAL = {Interfaces Free Bound.},
  FJOURNAL = {Interfaces and Free Boundaries. Mathematical Analysis,
              Computation and Applications},
    VOLUME = {9},
      YEAR = {2007},
    NUMBER = {3},
     PAGES = {411--430},
      ISSN = {1463-9963,1463-9971},
   MRCLASS = {74R10 (49J45 74G15 74S05)},
  MRNUMBER = {2341850},
MRREVIEWER = {Robin\ Olivian\ Simionescu-Panait},
       DOI = {10.4171/IFB/171},
       URL = {https://doi.org/10.4171/IFB/171},
}

@article {T02,
    AUTHOR = {Tonegawa, Yoshihiro},
     TITLE = {Phase field model with a variable chemical potential},
   JOURNAL = {Proc. Roy. Soc. Edinburgh Sect. A},
  FJOURNAL = {Proceedings of the Royal Society of Edinburgh. Section A.
              Mathematics},
    VOLUME = {132},
      YEAR = {2002},
    NUMBER = {4},
     PAGES = {993--1019},
      ISSN = {0308-2105,1473-7124},
   MRCLASS = {35J60 (35B25 35J20 80A22)},
  MRNUMBER = {1926927},
MRREVIEWER = {Vitaly\ A.\ Volpert},
       DOI = {10.1017/S0308210500001980},
       URL = {https://doi.org/10.1017/S0308210500001980},
}

@article {T05,
    AUTHOR = {Tonegawa, Yoshihiro},
     TITLE = {A diffused interface whose chemical potential lies in a
              {S}obolev space},
   JOURNAL = {Ann. Sc. Norm. Super. Pisa Cl. Sci. (5)},
  FJOURNAL = {Annali della Scuola Normale Superiore di Pisa. Classe di
              Scienze. Serie V},
    VOLUME = {4},
      YEAR = {2005},
    NUMBER = {3},
     PAGES = {487--510},
      ISSN = {0391-173X,2036-2145},
   MRCLASS = {35J60 (35B25 35J20 49J45 74N99)},
  MRNUMBER = {2185866},
MRREVIEWER = {Monica\ Musso},
}

@book {BBH94,
    AUTHOR = {Bethuel, Fabrice and Brezis, Ha\"{\i}m and H\'{e}lein,
              Fr\'{e}d\'{e}ric},
     TITLE = {Ginzburg-{L}andau vortices},
    SERIES = {Progress in Nonlinear Differential Equations and their
              Applications},
    VOLUME = {13},
 PUBLISHER = {Birkh\"{a}user Boston, Inc., Boston, MA},
      YEAR = {1994},
     PAGES = {xxviii+159},
      ISBN = {0-8176-3723-0},
   MRCLASS = {58E20 (35Q55 49-02 58E50 82D50)},
  MRNUMBER = {1269538},
       DOI = {10.1007/978-1-4612-0287-5},
       URL = {https://doi.org/10.1007/978-1-4612-0287-5},
}

@book {SS07,
    AUTHOR = {Sandier, Etienne and Serfaty, Sylvia},
     TITLE = {Vortices in the magnetic {G}inzburg-{L}andau model},
    SERIES = {Progress in Nonlinear Differential Equations and their
              Applications},
    VOLUME = {70},
 PUBLISHER = {Birkh\"{a}user Boston, Inc., Boston, MA},
      YEAR = {2007},
     PAGES = {xii+322},
      ISBN = {978-0-8176-4316-4; 0-8176-4316-8},
   MRCLASS = {82D55 (35B25 35B27 35J60 58E50 82-02)},
  MRNUMBER = {2279839},
MRREVIEWER = {Stanley\ A.\ Alama},
}

@article{BI23,
	  AUTHOR={Bonacini, Marco and Iurlano, Flaviana},
      TITLE={Convergence of critical points for a phase-field approximation of 1D cohesive fracture energies}, 
      JOURNAL = {Calculus of Variations and Partial Differential Equations},
      VOLUME = {63},
      YEAR = {2023},
      DOI = {10.1007/s00526-024-02786-6},
      URL={https://arxiv.org/abs/2309.17064}, 
}

@book {AFP00,
    AUTHOR = {Ambrosio, Luigi and Fusco, Nicola and Pallara, Diego},
     TITLE = {Functions of bounded variation and free discontinuity problems},
    SERIES = {Oxford Mathematical Monographs},
 PUBLISHER = {The Clarendon Press, Oxford University Press, New York},
      YEAR = {2000},
     PAGES = {xviii+434},
      ISBN = {0-19-850245-1},
   MRCLASS = {49-02 (49J45 49K10 49Qxx)},
  MRNUMBER = {1857292},
MRREVIEWER = {J.\ E.\ Brothers},
}

@article {A83,
    AUTHOR = {Anzellotti, Gabriele},
     TITLE = {Pairings between measures and bounded functions and compensated compactness},
   JOURNAL = {Ann. Mat. Pura Appl. (4)},
  FJOURNAL = {Annali di Matematica Pura ed Applicata. Serie Quarta},
    VOLUME = {135},
      YEAR = {1983},
     PAGES = {293--318},
      ISSN = {0003-4622},
   MRCLASS = {46G99 (28B05 49A99)},
  MRNUMBER = {750538},
MRREVIEWER = {Luciano\ Modica},
       DOI = {10.1007/BF01781073},
       URL = {https://doi.org/10.1007/BF01781073},
}

@book {EG92,
    AUTHOR = {Evans, Lawrence C. and Gariepy, Ronald F.},
     TITLE = {Measure theory and fine properties of functions},
    SERIES = {Textbooks in Mathematics},
   EDITION = {Revised},
 PUBLISHER = {CRC Press, Boca Raton, FL},
      YEAR = {2015},
     PAGES = {xiv+299},
      ISBN = {978-1-4822-4238-6},
   MRCLASS = {28-01},
  MRNUMBER = {3409135},
}

@article {B25,
    AUTHOR = {Bethuel, Fabrice},
     TITLE = {Asymptotics for 2-dimensional vectorial {A}llen-{C}ahn
              systems},
   JOURNAL = {Acta Math.},
  FJOURNAL = {Acta Mathematica},
    VOLUME = {234},
      YEAR = {2025},
    NUMBER = {2},
     PAGES = {189--314},
      ISSN = {0001-5962,1871-2509},
   MRCLASS = {35K57 (35B40 49J45 58E12)},
  MRNUMBER = {4930544},
       DOI = {10.4310/acta.2025.v234.n2.a1},
       URL = {https://doi.org/10.4310/acta.2025.v234.n2.a1},
}

@book {S83,
    AUTHOR = {Simon, Leon},
     TITLE = {Lectures on geometric measure theory},
    SERIES = {Proceedings of the Centre for Mathematical Analysis,
              Australian National University},
    VOLUME = {3},
 PUBLISHER = {Australian National University, Centre for Mathematical
              Analysis, Canberra},
      YEAR = {1983},
     PAGES = {vii+272},
      ISBN = {0-86784-429-9},
   MRCLASS = {49-01 (28A75 49F20)},
  MRNUMBER = {756417},
MRREVIEWER = {J.\ S.\ Joel},
}

@article {LS19,
    AUTHOR = {Le, Nam Q. and Sternberg, Peter J.},
     TITLE = {Asymptotic behavior of {A}llen-{C}ahn-type energies and
              {N}eumann eigenvalues via inner variations},
   JOURNAL = {Ann. Mat. Pura Appl. (4)},
  FJOURNAL = {Annali di Matematica Pura ed Applicata. Series IV},
    VOLUME = {198},
      YEAR = {2019},
    NUMBER = {4},
     PAGES = {1257--1293},
      ISSN = {0373-3114,1618-1891},
   MRCLASS = {49R05 (35B25 35J20 49J45 49K20 58E12 58E30)},
  MRNUMBER = {3987215},
MRREVIEWER = {Saeid\ Shokooh},
       DOI = {10.1007/s10231-018-0816-y},
       URL = {https://doi.org/10.1007/s10231-018-0816-y},
}

@article {BBO01,
    AUTHOR = {Bethuel, F. and Brezis, H. and Orlandi, G.},
     TITLE = {Asymptotics for the {G}inzburg-{L}andau equation in arbitrary
              dimensions},
   JOURNAL = {J. Funct. Anal.},
  FJOURNAL = {Journal of Functional Analysis},
    VOLUME = {186},
      YEAR = {2001},
    NUMBER = {2},
     PAGES = {432--520},
      ISSN = {0022-1236,1096-0783},
   MRCLASS = {35J60 (35B25 35J65 47J30 58E50)},
  MRNUMBER = {1864830},
MRREVIEWER = {Peter\ Tak\'a\v c},
       DOI = {10.1006/jfan.2001.3791},
       URL = {https://doi.org/10.1006/jfan.2001.3791},
}

@article {LR01,
    AUTHOR = {Lin, Fang-Hua and Rivi\`ere, Tristan},
     TITLE = {A quantization property for static {G}inzburg-{L}andau
              vortices},
   JOURNAL = {Comm. Pure Appl. Math.},
  FJOURNAL = {Communications on Pure and Applied Mathematics},
    VOLUME = {54},
      YEAR = {2001},
    NUMBER = {2},
     PAGES = {206--228},
      ISSN = {0010-3640,1097-0312},
   MRCLASS = {35J60 (35J20 35J65 58E50)},
  MRNUMBER = {1794353},
MRREVIEWER = {Marcel\ Oliver},
       DOI = {10.1002/1097-0312(200102)54:2<206::AID-CPA3>3.0.CO;2-W},
       URL =
              {https://doi.org/10.1002/1097-0312(200102)54:2<206::AID-CPA3>3.0.CO;2-W},
}

@article {Schochet_1995,
    AUTHOR = {Schochet, Steven},
     TITLE = {The weak vorticity formulation of the {$2$}-{D} {E}uler
              equations and concentration-cancellation},
   JOURNAL = {Comm. Partial Differential Equations},
  FJOURNAL = {Communications in Partial Differential Equations},
    VOLUME = {20},
      YEAR = {1995},
    NUMBER = {5-6},
     PAGES = {1077--1104},
      ISSN = {0360-5302},
   MRCLASS = {35Q30 (76C99)},
  MRNUMBER = {1326916},
MRREVIEWER = {Denis Serre},
       DOI = {10.1080/03605309508821124},
       URL = {https://doi-org.ezproxy.universite-paris-saclay.fr/10.1080/03605309508821124},
}

@article {DePhilippis_Rindler_2016,
    AUTHOR = {De Philippis, Guido and Rindler,Filip},
     TITLE = {On the structure of {$\mathscr  A$}-free measures and applications},
   JOURNAL = {Ann. of Math. (2)},
  FJOURNAL = {Annals of Mathematics. Second Series},
    VOLUME = {184},
      YEAR = {2016},
    NUMBER = {3},
     PAGES = {1017--1039},
      ISSN = {0003-486X},
   MRCLASS = {49Q20 (28B20 46A22)},
  MRNUMBER = {3549629},
MRREVIEWER = {Pei Biao Zhao},
       DOI = {10.4007/annals.2016.184.3.10},
       URL = {https://doi-org.ezproxy.universite-paris-saclay.fr/10.4007/annals.2016.184.3.10},
}

@article {Le_2011,
    AUTHOR = {Le, Nam Q.},
     TITLE = {On the second inner variation of the {A}llen-{C}ahn functional
              and its applications},
   JOURNAL = {Indiana Univ. Math. J.},
  FJOURNAL = {Indiana University Mathematics Journal},
    VOLUME = {60},
      YEAR = {2011},
    NUMBER = {6},
     PAGES = {1843--1856},
      ISSN = {0022-2518},
   MRCLASS = {49K10 (49J45)},
  MRNUMBER = {3008253},
MRREVIEWER = {Pablo Pedregal},
       DOI = {10.1512/iumj.2011.60.4505},
       URL = {https://doi-org.ezproxy.universite-paris-saclay.fr/10.1512/iumj.2011.60.4505},
}

@article {Le_2015,
    AUTHOR = {Le, Nam Q.},
     TITLE = {On the second inner variations of {A}llen-{C}ahn type energies
              and applications to local minimizers},
   JOURNAL = {J. Math. Pures Appl. (9)},
  FJOURNAL = {Journal de Math\'{e}matiques Pures et Appliqu\'{e}es. Neuvi\`eme S\'{e}rie},
    VOLUME = {103},
      YEAR = {2015},
    NUMBER = {6},
     PAGES = {1317--1345},
      ISSN = {0021-7824},
   MRCLASS = {49K10 (35B25 35J20)},
  MRNUMBER = {3343700},
       DOI = {10.1016/j.matpur.2014.10.014},
       URL = {https://doi-org.ezproxy.universite-paris-saclay.fr/10.1016/j.matpur.2014.10.014},
}

@article {DeGiorgi_Carriero_Leaci_1989,
    AUTHOR = {De Giorgi, E. and Carriero, M. and Leaci, A.},
     TITLE = {Existence theorem for a minimum problem with free
              discontinuity set},
   JOURNAL = {Arch. Rational Mech. Anal.},
  FJOURNAL = {Archive for Rational Mechanics and Analysis},
    VOLUME = {108},
      YEAR = {1989},
    NUMBER = {3},
     PAGES = {195--218},
      ISSN = {0003-9527},
   MRCLASS = {49A21 (35J85 35R35 49F22 58E15)},
  MRNUMBER = {1012174},
MRREVIEWER = {Rolf Kl\"{o}tzler},
       DOI = {10.1007/BF01052971},
       URL = {https://doi-org.ezproxy.universite-paris-saclay.fr/10.1007/BF01052971},
}

@article {Carriero_Leaci_1990,
    AUTHOR = {Carriero, M. and Leaci, A.},
     TITLE = {Existence theorem for a {D}irichlet problem with free
              discontinuity set},
   JOURNAL = {Nonlinear Anal.},
  FJOURNAL = {Nonlinear Analysis. Theory, Methods \& Applications. An
              International Multidisciplinary Journal},
    VOLUME = {15},
      YEAR = {1990},
    NUMBER = {7},
     PAGES = {661--677},
      ISSN = {0362-546X},
   MRCLASS = {49J10 (35R35 49Q20)},
  MRNUMBER = {1073957},
MRREVIEWER = {Francesco Zirilli},
       DOI = {10.1016/0362-546X(90)90006-3},
       URL = {https://doi-org.ezproxy.universite-paris-saclay.fr/10.1016/0362-546X(90)90006-3},
}

@book {Braides_2014,
    AUTHOR = {Braides, Andrea},
     TITLE = {Local minimization, variational evolution and
              {$\Gamma$}-convergence},
    SERIES = {Lecture Notes in Mathematics},
    VOLUME = {2094},
 PUBLISHER = {Springer, Cham},
      YEAR = {2014},
     PAGES = {xii+174},
      ISBN = {978-3-319-01981-9; 978-3-319-01982-6},
   MRCLASS = {49-02 (35B27 49J10 49J45 49K40)},
  MRNUMBER = {3137417},
MRREVIEWER = {Giuseppe Buttazzo},
       DOI = {10.1007/978-3-319-01982-6},
       URL = {https://doi-org.ezproxy.universite-paris-saclay.fr/10.1007/978-3-319-01982-6},
}

\end{document}